\numberwithin{equation}{section}
\newtheorem{thm}{Theorem}[section]
\newtheorem{cor}[thm]{Corollary}
\newtheorem{Lemma}[thm]{Lemma}
\newtheorem{prop}[thm]{Proposition}
\theoremstyle{definition}
\newtheorem{Remark}[thm]{Remark}
\tikzstyle{vertex}=[circle, draw, inner sep=0pt, minimum size=6pt]
\newcommand{\C}{{\mathbf C}}
\newcommand{\Z}{{\mathbf Z}}
\newcommand{\Ext}{\operatorname{Ext}}
\newcommand{\End}{\operatorname{End}}
\newcommand{\Hom}{\operatorname{Hom}}
\newcommand{\Ind}{\operatorname{Ind}}
\newcommand{\Res}{\operatorname{Res}}
\newcommand{\Ker}{\operatorname{Ker}}
\newcommand{\sdim}{\operatorname{sdim}}
\newcommand{\0}{\bar 0}
\newcommand{\gl}{\ensuremath{\mathfrak{gl}}}
\newcommand{\g}{\ensuremath{\mathfrak{g}}}
\newcommand{\p}{\ensuremath{\mathfrak{p}}}
\newcommand{\h}{\ensuremath{\mathfrak{h}}}
\newcommand{\q}{\ensuremath{\mathfrak{q}}}
\newcommand{\sq}{\ensuremath{\mathfrak{sq}}}
\newcommand{\ep}{\varepsilon}
\newcommand{\im}{\operatorname{im}}
\begin{document}

\allowdisplaybreaks

\newcommand{\arXivNumber}{2008.10649}

\renewcommand{\thefootnote}{}

\renewcommand{\PaperNumber}{141}

\FirstPageHeading

\ShortArticleName{Extension Quiver for Lie Superalgebra $\mathfrak{q}(3)$}

\ArticleName{Extension Quiver for Lie Superalgebra $\boldsymbol{\mathfrak{q}(3)}$\footnote{This paper is a~contribution to the Special Issue on Representation Theory and Integrable Systems in honor of Vitaly Tarasov on the 60th birthday and Alexander Varchenko on the 70th birthday. The full collection is available at \href{https://www.emis.de/journals/SIGMA/Tarasov-Varchenko.html}{https://www.emis.de/journals/SIGMA/Tarasov-Varchenko.html}}}

\Author{Nikolay GRANTCHAROV~$^\dag$ and Vera SERGANOVA~$^\ddag$}

\AuthorNameForHeading{N.~Grantcharov and V.~Serganova}

\Address{$^\dag$~Department of Mathematics, University of Chicago, Chicago, IL 60637, USA}
\EmailD{\href{mailto:nikolayg@uchicago.edu}{nikolayg@uchicago.edu}}

\Address{$^\ddag$~Department of Mathematics, University of California at Berkeley, Berkeley, CA~94720, USA}
\EmailD{\href{mailto:serganov@math.berkeley.edu}{serganov@math.berkeley.edu}}

\ArticleDates{Received August 31, 2020, in final form December 10, 2020; Published online December 21, 2020}

\Abstract{We describe all blocks of the category of finite-dimensional $\mathfrak{q}(3)$-supermodules by providing their extension quivers. We also obtain two general results about the representation of $\mathfrak{q}(n)$: we show that the Ext quiver of the standard block of $\q(n)$ is obtained from the principal block of $\mathfrak{q}(n-1)$ by identifying certain vertices of the quiver and prove a ``virtual'' BGG-reciprocity for $\mathfrak{q}(n)$. The latter result is used to compute the radical filtrations of $\mathfrak{q}(3)$ projective covers.}

\Keywords{Lie superalgebra; extension quiver; cohomology; flag supermanifold}

\Classification{17B55; 17B10}

\renewcommand{\thefootnote}{\arabic{footnote}}
\setcounter{footnote}{0}

\section{Introduction}
The ``queer'' Lie superalgebra $\q(n)$ is an interesting super analogue of the Lie algebra $\mathfrak{gl}(n)$. Other related queer-type Lie superalgebras include the subsuperalgebra $\mathfrak{sq}(n)$ obtained by taking odd trace 0, and for $n\geq3$, the simple Lie superalgebra $\mathfrak{psq}(n)$ obtained by taking the quotient of the commutator $[\q(n),\q(n)]$ by the center. These queer superalgebras have a rich representation theory, partly due to the Cartan subsuperalgebra $\h$ not being abelian and hence having nontrivial representations, called Clifford modules.

Finite-dimensional representation theory of $\q(n)$ was initiated in \cite{Kac} and developed in \cite{P}. Algorithms for computing characters of irreducible finite-dimensional representations were obtained in \cite{PS2, PS1} using methods of supergeometry and in \cite{B1,B2} using a categorification approach. Finite-dimensional representations of half-integer weights were studied in detail in \cite{BD, CK,CKW}. In~\cite{Maz}, the blocks in the category of finite-dimensional
$\q(2)$-modules semisimple over the even part were classified and described using quivers and relations. A general classification of blocks was obtained in \cite{Ser-ICM} using translation functors and supergeometry.

In this paper, we describe the blocks in the category of finite-dimensional $\q(3)$ and $\mathfrak{sq}(3)$ modules semisimple over the even part in terms of quiver and relations. We found that to describe blocks of $\q(n)$ in general, it remains to consider the principal block. For $n=3$, this is the first example of a wild block in $\q$.
Our main tools are relative Lie superalgebra cohomology and geometric induction.

In Section~\ref{section2}, we describe some background information for $\q(n)$ and quivers, and we formulate our main theorems, Theorems~\ref{MainTheorem sq} and~\ref{MainTheorem}. In Section~\ref{section3}, we introduce geometric induction and prove a ``virtual'' BGG reciprocity law, Theorem \ref{BGG Reciprocity}, that generalizes~\cite{GS2} to the queer Lie superalgebras. This result allows us to describe radical filtrations of all finite-dimensional indecomposable projective modules for $\mathfrak{sq}(3)$ and $\q(3)$. Diagrams of these are provided in Appendix~\ref{appendixA}. In Section~\ref{section4}, we prove a result on self extensions of simples for $\g=\mathfrak{q}(n)$, Theorem~\ref{selfextensions}, and for $\g=\mathfrak{sq}(n)$, Theorem~\ref{selfextensions2}. In Section~\ref{section5}, we show the standard block for $\q(n)$ is closely related to the principal block of $\q(n-1)$, Proposition~\ref{tadmissible}, and in particular deduce the quiver for $\mathfrak{sq}(3)$ and $\q(3)$ standard block. Finally in Section~\ref{section6}, we compute the quiver for principal block of $\mathfrak{sq}(3)$ and $\q(3)$.

\section{Preliminaries and main theorem}\label{section2}

\subsection{General definitions}\label{section2.1} Throughout we work with $\C$ as the ground field. We set $\Z_2=\Z/2\Z$. Recall that a \textit{vector superspace} $V=V_{\bar{0}}\oplus V_{\bar 1}$ is a $\Z_2$-graded vector space. Elements of $V_{\overline{0}}$ and $V_{\overline{1}}$ are called even and odd, respectively. If $V$, $V'$ are superspaces, then the space $\Hom_{\C}(V,V')$ is naturally $\Z_2$-graded with grading $f\in\Hom_{\C}(V,V')_{ s}$ if $f(V_{{r}})\subset V_{{r+s}}'$ for all ${r}\in\Z_2$.

A \textit{superalgebra} is a $\Z_2$-graded, unital, associative algebra $A=A_{\overline{0}}\oplus A_{\overline{1}}$ which satisfies \mbox{$A_{{r}}A_{ s}\subset A_{{r+s}}$}. A \textit{Lie superalgebra} is a superspace $\g=\g_{\0}\oplus\g_{\bar{1}}$ with bracket operation $[\,,\, ]\colon \g\otimes\g\allowbreak \rightarrow \g$ which preserves the graded version of the usual Lie bracket axioms. The \textit{universal enveloping algebra} $U(\g)$ is $\Z_2$-graded and satisfies a PBW type theorem~\cite{Kac}.
A \textit{$\g$-module} is a left $\Z_2$-graded $U(\g)$-module. A \textit{morphism} of $\g$-modules $M\rightarrow M'$ is an element of $\Hom_{\C}(M,M')_{\bar 0}$ satisfying $f(xm)=xf(m)$ for all $m\in M,x\in U(\g)$. We denote by $\g$-mod the category of $\g$-modules. This is a symmetric monoidal category. The primary category of interest $\mathcal{F}$ consists of finite-dimensional $\g$-modules which are semisimple over $\g_{\0}$. We stress that we only allow for \textit{parity preserving} morphisms in $\mathcal{F}$. In this way, $\mathcal{F}$ is an abelian rigid symmetric monoidal category: for $V,W\in\mathcal{F}$, define $V\otimes W$ and $V^*$ using the coproduct and antipode of $U(\g)$, respectively:
\[ \delta(x)=x\otimes 1+1\otimes x,\qquad S(x)=-x\qquad \forall\, x\in\g.\]

For $V\in\g$-mod, we denote by $S(V)$ the symmetric superalgebra. As a $\g_{\0}$-module, $S(V)$ is isomorphic to $S(V)=S(V_{\bar{0}})\otimes\Lambda(V_{\bar{1}})$, where $\Lambda(V_{\bar{1}})$ is the exterior algebra of $V_{\bar{1}}$ in the category of vector spaces. For $V$ a $\g_1$-module and $W$ a $\g_2$-module, we define the outer tensor product $V\boxtimes W$ to be the $\g_1\oplus\g_2$-module with the action for $(q_1,q_2)\in \g_1\oplus\g_2$ given by
\[ (q_1,q_2)(v\boxtimes w):= (-1)^{\overline{q_2}\overline{v}}(q_1v\boxtimes q_2w).\]

We define the (super)dimension of $V\in\g$-mod as follows. Let $\C[\ep]$ be polynomial algebra with variable $\ep$ and denote two-dimensional $\C$-algebra $\C[\ep]/\big(\ep^2-1\big)$ as $\widetilde{\C}$. Then
\[\dim(V):=\dim_{\C}(V_{\overline{0}})+\dim_{\C}(V_{\bar{1}})\ep\in\widetilde{\C}.\]
The \textit{parity change functor} $\Pi\colon A\text{\rm-smod}\rightarrow A{\rm - smod}$ is defined as follows: For $M\in A\text{\rm-smod}$, $\Pi(M)_{\bar{0}}:=M_{\bar{1}}$ and $\Pi(M)_{\bar{1}}:=M_{\bar {0}}$ and the action on $m\in\Pi(M)$ is $a\cdot m = (-1)^{\bar{a}}am$. Lastly, if $f\colon M\rightarrow N$ is a morphism of supermodules, then $\Pi f\colon \Pi M\rightarrow\Pi N$ is $\Pi f=f$.

\subsection[The queer Lie superalgebra q(n)]{The queer Lie superalgebra $\boldsymbol{\q(n)}$}\label{section2.2}

By definition, the \textit{queer Lie superalgebra $\q(n)$} is the Lie subsuperalgebra of $\gl(n|n)$ leaving invariant an odd automorphism of the standard representation $p$ with the property $p^2=-1$. In matrix form,
\[\q(n)=\left\{\begin{pmatrix}
A & B\\
B & A
\end{pmatrix}\colon A, B \in \mathfrak{gl}_n(\C) \right\}, \qquad \text{if} \quad p = \begin{pmatrix}
0 & 1_n\\
-1_n & 0
\end{pmatrix}.
\] Let $\g=\q(n)$. The even (resp.\ odd) subspace of $\g$ consists of block matrices with $B = 0$ (resp.\ $A=0$). For $1\leq i,j\leq n$, we define the standard basis elements as
\[ e_{i,j}^{\0}=\begin{pmatrix}
E_{i,j} & 0\\
0& E_{i,j}
\end{pmatrix}\in\g_{\0}\qquad \text{and} \qquad e_{i,j}^{\bar{1}}=\begin{pmatrix}
0& E_{i,j}\\
E_{i,j}& 0\\
\end{pmatrix}\in\g_{\bar{1}},
\]
where $E_{i,j}$ denote the elementary matrix.
Observe the odd trace $\operatorname{otr}\left(\begin{smallmatrix}
A & B\\
B & A\\
\end{smallmatrix}\right):=\operatorname{tr}(B)$ annihilates the commutator $[\mathfrak{q}(n),\mathfrak{q}(n)]$. Let
\[ \mathfrak{sq}(n)=\{X\in\q(n)\colon \operatorname{otr}(X)=0\}.\]
Furthermore, $\operatorname{otr}(XY)$ defines a nondegenerate $\g$-invariant odd bilinear form on~$\g$. In particular, we have an isomorphism $\mathfrak{q}(n)^*\cong \Pi\mathfrak{q}(n)$ of $\mathfrak{q}(n)$-modules.

All {Borel} Lie superalgebras $\mathfrak{b}\subset\g$ are conjugate to the ``standard'' Borel, i.e., block matrices where $A, B\in\gl(n)$ are upper triangular. The \textit{nilpotent subsuperalgebra} $\mathfrak{n}$ consists of block matrices where $A$, $B$ are strictly upper triangular.

In the standard basis, the supercommutator has the form
\[ [e_{ij}^{\sigma}, e_{kl}^{\tau}] = \delta_{jk}e_{il}^{\sigma+\tau}-(-1)^{\sigma\tau}\delta_{il}e_{kj}^{\sigma+\tau},\]
where $\sigma,\tau\in\Z_2$. The \textit{Cartan superalgebra} $\h$ has basis $e_{ii}^{\sigma}$ for $1\leq i\leq n$, $\sigma\in\Z_2$. The elements $H_i:= e_{ii}^{\0}$, $\overline{H}_i:=e_{ii}^{\bar{1}}$, $1\leq i \leq n$, form a basis for $\h_{\0}$, $\h_{\bar{1}}$, respectively. Let $\{\varepsilon_i\,|\, i=1,\dots,n\}\subset\h_{\bar0}^*$ denote the dual basis of~$\{H_i\}$. There is a root decomposition of $\g$ with respect to the Cartan subalgebra $\h$ given by
\[\g=\h\oplus\bigoplus_{\alpha\in\Phi}\g_{\alpha},\]
where $\Phi = \{\varepsilon_i-\varepsilon_j \,|\, 1\leq i\neq j\leq n\}$ is the same as the set of roots of $\mathfrak{gl}_n(\C)$. For a root $\alpha=\ep_i-\ep_j$ we have $\dim\g_{\alpha}=1+\ep$ because $\g_{\alpha} = \operatorname{span}\{e_{i,j}^{\sigma}\colon \sigma\in\Z_2\}$. The positive roots are $\Phi^+:=\{\ep_i-\ep_j\colon 1\leq i<j\leq n\}$. The simple roots are $\{\ep_i-\ep_{i+1}\colon 1\leq i\leq n-1\}$. The Weyl group for $\q(n)$ is $W=S_n$, the symmetric group on~$n$ letters.

By $\mathfrak{h}'$ we denote the Cartan subsuperalgebra of $\mathfrak{sq}(n)$.
 A \textit{weight} is by definition an element $\lambda\in\h_{\0}^*$ and we write it in the form $\lambda = (\lambda_1,\dots,\lambda_n)$ with respect to the standard basis $(\varepsilon_1,\dots,\varepsilon_n)$. We say $\lambda$ is integral if $\lambda_i\in\Z$ for all $1\leq i\leq n$. We say $(\lambda_1,\dots,\lambda_n)\in\h_{\0}^*$ is \textit{typical} if $\lambda_i+\lambda_j\neq 0$ for all $1\leq i\neq j\leq n$. We introduce partial ordering on $\h_{\0}^*$ via $\lambda\leq \mu$ if and only if $\mu-\lambda\in\mathbf{N}\Phi^+$. Finally, we define $\rho_0:=1/2\sum_{\alpha\in\Phi^+}\alpha$.

\subsection[Irreducible h and g-representations]{Irreducible $\boldsymbol{\h}$ and $\boldsymbol{\g}$-representations}\label{section2.3}

Following \cite[Proposition~1]{P}, we now define for each $\lambda\in\h_{\0}^*$ a simple $\h$-supermodule. Define an even superantisymmetric bilinear form $F_{\lambda}\colon \h_{\bar{1}}\times\h_{\bar{1}}\rightarrow \C$ as $F_{\lambda}(u,v): = \lambda([u,v])$. Let $K_{\lambda} = \Ker F_{\lambda}$ and $E_{\lambda} = \h_{\bar{1}}/K_{\lambda}$. The restriction of~$F_\lambda$ to $\mathfrak{h}'$ will be denoted by $F'_\lambda$ and we set
 $K'_{\lambda} = \Ker F'_{\lambda}$ and $E'_{\lambda} = \h'_{\bar{1}}/K'_{\lambda}$.

 \begin{Lemma}\label{dim Clifford}
 Let $\lambda=(\lambda_1,\dots,\lambda_n)\in\h_{\bar{0}}^*$.
 \begin{enumerate}\itemsep=0pt
 \item[$(a)$] If there exists $i$ such that $\lambda_i=0$, then
 \[\dim E_{\lambda}=\dim E'_{\lambda}=|\{i\colon \lambda_i\neq 0\}|.\]
 \item[$(b)$] If all $\lambda_i\neq 0$ and $\frac{1}{\lambda_1}+\dots+\frac{1}{\lambda_n}\neq 0$, then
\[\dim E'_{\lambda} = n-1,\qquad \dim E_{\lambda}=n.\]

 \item[$(c)$] If all $\lambda_i\neq 0$ and $\frac{1}{\lambda_1}+\dots+\frac{1}{\lambda_n}= 0$, then
\[\dim E'_{\lambda} = n-2,\qquad \dim E_{\lambda}=n.\]
 \end{enumerate}
 \end{Lemma}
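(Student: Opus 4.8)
The plan is to reduce everything to a single linear-algebra computation: write down the Gram matrix of $F_\lambda$ in the basis $\overline{H}_1,\dots,\overline{H}_n$ of $\h_{\bar 1}$, observe it is diagonal, and then read off the three cases.

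First I would compute the bracket of the odd Cartan generators. From the commutation relations $[e_{ij}^{\sigma},e_{kl}^{\tau}]=\delta_{jk}e_{il}^{\sigma+\tau}-(-1)^{\sigma\tau}\delta_{il}e_{kj}^{\sigma+\tau}$ one gets $[\overline{H}_i,\overline{H}_j]=2\delta_{ij}H_i$, hence $F_{\lambda}(\overline{H}_i,\overline{H}_j)=\lambda([\overline{H}_i,\overline{H}_j])=2\delta_{ij}\lambda_i$. So in the basis $\{\overline{H}_i\}$ the form $F_{\lambda}$ is the diagonal matrix $\operatorname{diag}(2\lambda_1,\dots,2\lambda_n)$ (in particular symmetric, as it should be for a superantisymmetric form on a purely odd space). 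Since $K_{\lambda}=\Ker F_{\lambda}$ is by definition the radical of this form, $\dim E_{\lambda}=\dim\h_{\bar 1}-\dim K_{\lambda}=\operatorname{rank}F_{\lambda}=|\{i\colon\lambda_i\neq 0\}|$; when all $\lambda_i\neq 0$ this is $n$. This settles the claims about $E_{\lambda}$ in $(a)$, $(b)$, $(c)$ at once.

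For $E'_{\lambda}$, note that $\operatorname{otr}(\overline{H}_i)=1$, so $\h'_{\bar 1}$ is the hyperplane $V'=\{\sum_i a_i\overline{H}_i\colon \sum_i a_i=0\}\subset\h_{\bar 1}$, and $F'_{\lambda}$ is the restriction of the diagonal form above to $V'$. The key observation is that a linear functional $b\mapsto\sum_i x_i b_i$ vanishes on $\{\sum_i b_i=0\}$ exactly when $(x_i)_i$ is a scalar multiple of $(1,\dots,1)$; applying this to $x_i=2\lambda_i a_i$ shows the radical of $F'_{\lambda}$ consists of those $(a_i)\in V'$ with $(2\lambda_i a_i)_i$ a scalar multiple of $(1,\dots,1)$. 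Then I would split into cases. In case $(a)$, with $m:=|\{i\colon\lambda_i=0\}|\geq 1$, the scalar must be $0$, forcing $a_i=0$ whenever $\lambda_i\neq 0$; the radical is the $(m-1)$-dimensional space of tuples supported on the zero locus of $\lambda$ with zero sum, so $\dim E'_{\lambda}=(n-1)-(m-1)=|\{i\colon\lambda_i\neq 0\}|$. If all $\lambda_i\neq 0$, the condition forces $a_i=c/(2\lambda_i)$, and $\sum_i a_i=0$ becomes $c\sum_i 1/\lambda_i=0$: in case $(b)$, $\sum 1/\lambda_i\neq 0$ gives $c=0$, the radical is trivial and $\dim E'_{\lambda}=n-1$; in case $(c)$, $c$ is a free parameter, the radical is $1$-dimensional and $\dim E'_{\lambda}=n-2$.

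There is no serious obstacle: the whole lemma is the rank computation of an explicit diagonal form and of its restriction to a coordinate hyperplane. The only point needing care is the description of the radical of $F'_{\lambda}$ via the "scalar multiple of $(1,\dots,1)$" criterion, together with bookkeeping of the three cases; passing to the basis $\{\overline{H}_i\}$ from the outset makes all of this transparent.
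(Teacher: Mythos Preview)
Your proof is correct and follows essentially the same approach as the paper: both compute $F_\lambda$ as the diagonal form $F_\lambda(\overline{H}_i,\overline{H}_j)=2\delta_{ij}\lambda_i$, read off $\dim E_\lambda$ from its rank, and then determine the radical of the restriction to the hyperplane $\sum a_i=0$ by the condition that all $\lambda_i a_i$ are equal to a common constant (the paper phrases this as $u_i\lambda_i=u_{i+1}\lambda_{i+1}$, which is exactly your ``scalar multiple of $(1,\dots,1)$'' criterion). The case splits and the resulting dimension counts are identical.
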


\begin{proof}
It is straightforward that $K_\lambda$ is the span of $\bar H_i$ for all $i$ such that $\lambda_i\neq 0$. Hence
\[\dim E_{\lambda}=|\{i\colon \lambda_i\neq 0\}|.\]

 To compute $K'_\lambda$, consider the basis $\{\bar H_i-\bar H_{i+1}\,|\, i=1,\dots,n-1\}$ of $\h'_{\bar{1}}$. Then
\begin{align*}
K'_{\lambda}&=\{u\in\h'_{\bar{1}}\,|\, \lambda([u,\bar{H}_i-\bar H_{i+1}])=0\text{ for all } 1\leq i\leq n-1\}\\
&=\{ (u_1,\dots,u_n)\in\h_{\bar1}\,|\, u_1+\dots+u_n=0, u_i\lambda_i=u_{i+1}\lambda_{i+1}\text{ for all } 1\leq i\leq n-1\}.
\end{align*}
Suppose first without loss of generality $\lambda_1=\dots = \lambda_k=0$, where $k\geq 1$. This forces $u_{k+1}=\dots=u_{n}=0$ and $u_1+\dots+ u_k=0$, so $K'_\lambda$ has a basis
\[ \big\{ \bar H_1-\bar H_2,\dots,\bar H_{k-1}-\bar H_k\big\}\]
and $\dim K'_{\lambda} = k-1$.
Thus, $\dim E'_{\lambda} = \dim\h'_{\bar{1}}-\dim {K'_{\lambda}} = n-k$.

Next, suppose all $\lambda_i\neq 0$. Then similarly we compute
\begin{equation*}K'_{\lambda}=\begin{cases} \C\big(\frac{1}{\lambda_1},\frac{1}{\lambda_2},\frac{1}{\lambda_3},\dots,\frac{1}{\lambda_n}\big) &\text{if } \frac{1}{\lambda_1}+\dots+\frac{1}{\lambda_n}=0,
\\ 0 &\text{if } \frac{1}{\lambda_1}+\dots+\frac{1}{\lambda_n}\neq 0. \end{cases}\tag*{\qed}
\end{equation*}\renewcommand{\qed}{}
\end{proof}

Let $\dim E_{\lambda}=m>0$. On the vector superspace $E_{\lambda}$, $F_{\lambda}$ induces a nondegenerate bilinear form, also denoted $F_{\lambda}$. Let $\operatorname{Cliff}(\lambda)$ be the \textit{Clifford superalgebra} defined by $E_{\lambda}$ and $F_{\lambda}$. Then (1)~$\operatorname{Cliff}(\lambda)$ is isomorphic to Cliff$(m)$, the Clifford superalgebra with generators $e_1,\dots,e_m$ and relations $e_i^2=1$, (2)~$\dim\operatorname{Cliff}(\lambda) = 2^{m-1}(1+\varepsilon)$, and (3) the category $\operatorname{Cliff}(\lambda)$-mod is semisimple (e.g.,~\cite{Mein}).

If $m$ is odd, then there exists a unique simple $\operatorname{Cliff}(m)$-module, denoted by $v(m)$, which is invariant under parity change (this follows from existence of an odd automorphism). If $m$ is even, then there exists $2$ nonisomorphic simple $\operatorname{Cliff}(m)$-modules $v(m)$ and $\Pi v(m)$ which are swapped by the parity change functor. Using the surjective homomorphism $U(\h)\rightarrow\operatorname{Cliff}(\lambda)$ with kernel $(H_i-\lambda_i, K_{\lambda})$, we lift $v(m)$ to an $\h$-module which we denote by $v(\lambda)$. Lemma~\ref{dim Clifford} implies
\[\dim v(\lambda) = \dim(v(m)) = 2^{\left \lfloor{(m-1)/2}\right \rfloor }(1+\varepsilon),\]
where $\lfloor x\rfloor$ denotes the integer part of $x\in\mathbf{R}$. Furthermore, this construction provides a complete irredundant collection of all finite-dimensional simple $\h$-supermodules.

Next define the \textit{Verma module}
\[ M_{\g}(\lambda) := U(\g)\otimes_{U(\mathfrak{b})} v(\lambda),\]
where the action of $\mathfrak{n}^+$ on $v(\lambda)$ is trivial.

Let
\[\Lambda=\{\lambda=(\lambda_1,\dots,\lambda_n)\in \h_{\bar{0}}^*\colon \lambda_{i}-\lambda_{i+1}\in\Z \}.\]
The set of $\g$-dominant integral weights is
\[ \Lambda^+=\{\lambda=(\lambda_1,\dots,\lambda_n)\in \h_{\bar{0}}^*\colon \lambda_{i}-\lambda_{i+1}\in\Z_{\geq0} \ \text{and} \ \lambda_i=\lambda_j\Rightarrow \lambda_i=\lambda_j=0\}.\]
Below is the main theorem about irreducible $\g$-modules, first proven by V.~Kac.
\begin{thm}[\cite{Kac}]\label{classification simples}\quad
\begin{enumerate}\itemsep=0pt
\item[$1.$] For any weight $\lambda\in\h_{\bar{0}}^*$, $M_{\g}(\lambda)$ has a unique maximal submodule $N(\lambda)$, hence a unique simple quotient, $L_{\g}(\lambda)$.
\item[$2.$] For each finite-dimensional irreducible $\g$-module $V$, there exists a unique weight $\lambda\in\Lambda^+$ such that V is a homomorphic image of $M_{\g}(\lambda)$.
\item[$3.$] $L _{\g}(\lambda) := M_{\g}(\lambda)/N_{\g}(\lambda)$ is finite dimensional if and only if $\lambda\in\Lambda^+$.
\end{enumerate}
\end{thm}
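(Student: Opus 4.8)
I would treat this as classical highest‑weight theory in the style of Kac; the first two parts are formal, and all the real content is in the finite‑dimensionality criterion of part~3.

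For part~1, the plan is to apply the PBW theorem to the triangular decomposition $\g=\mathfrak{n}^-\oplus\h\oplus\mathfrak{n}^+$, giving $M_{\g}(\lambda)\cong U(\mathfrak{n}^-)\otimes_{\C}v(\lambda)$ as $\h_{\bar 0}$‑modules. Since $H_i$ acts on $v(\lambda)$ by the scalar $\lambda_i$, the module $M_{\g}(\lambda)$ splits into $\h_{\bar 0}$‑weight spaces with all weights in $\lambda-\mathbf{N}\Phi^+$, and the $\lambda$‑weight space is exactly the copy of $v(\lambda)$, which generates $M_{\g}(\lambda)$ over $U(\g)$. Every submodule is $\h_{\bar 0}$‑graded; because $v(\lambda)$ is simple as an $\h$‑module, a submodule either contains $v(\lambda)$ — hence equals $M_{\g}(\lambda)$ — or meets $v(\lambda)$ in $0$, hence has trivial $\lambda$‑weight space. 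Therefore the sum $N(\lambda)$ of all proper submodules still has trivial $\lambda$‑weight space, is the unique maximal submodule, and $L_{\g}(\lambda)=M_{\g}(\lambda)/N(\lambda)$ is the unique simple quotient. For part~2, let $V$ be finite‑dimensional and irreducible; its set of $\h_{\bar 0}$‑weights is finite and nonempty, so it has a maximal element $\lambda$ for $\leq$. Then $V_\lambda$ is killed by $\mathfrak{n}^+$ (positive root vectors raise weights) and is a finite‑dimensional $\h$‑module on which each $H_i$ acts by $\lambda_i$; by semisimplicity of $\operatorname{Cliff}(\lambda)$‑mod it contains a simple $\h$‑submodule which, after applying $\Pi$ if necessary, is isomorphic to $v(\lambda)$. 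Frobenius reciprocity for $M_{\g}(\lambda)=U(\g)\otimes_{U(\mathfrak{b})}v(\lambda)$ then produces a nonzero $\g$‑map $M_{\g}(\lambda)\to V$, surjective since $V$ is irreducible, so $V\cong L_{\g}(\lambda)$; uniqueness of $\lambda$ holds because $\lambda$ is the unique maximal $\h_{\bar 0}$‑weight of $L_{\g}(\lambda)$. That this $\lambda$ lies in $\Lambda^+$ is the content of part~3.

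For the ``only if'' direction of part~3, restricting $L_{\g}(\lambda)$ to $\g_{\bar 0}\cong\gl(n)$ and using that $v(\lambda)\subset L_{\g}(\lambda)$ consists of $\gl(n)$‑highest weight vectors of weight $\lambda$ forces $\lambda$ to be $\gl(n)$‑dominant integral, i.e.\ $\lambda_i-\lambda_{i+1}\in\Z_{\geq 0}$. For the remaining condition, for each simple root $\ep_i-\ep_{i+1}$ let $\q_i\cong\q(2)$ be the subalgebra spanned by the $e^{\sigma}_{ab}$ with $a,b\in\{i,i+1\}$, $\sigma\in\Z_2$; the $\q_i$‑submodule of $L_{\g}(\lambda)$ generated by $v(\lambda)$ is finite‑dimensional and has $L_{\q(2)}(\lambda_i,\lambda_{i+1})$ among its subquotients (up to parity), so $L_{\q(2)}(\lambda_i,\lambda_{i+1})$ must be finite‑dimensional, and a direct computation in $\q(2)$ shows $L_{\q(2)}(c,c)$ is infinite‑dimensional for every $c\neq 0$. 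Combined with $\lambda_i\geq\lambda_{i+1}$ this yields $\lambda_i=\lambda_j\Rightarrow\lambda_i=0$, so $\lambda\in\Lambda^+$.

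For the ``if'' direction one must, given $\lambda\in\Lambda^+$, exhibit a finite‑dimensional $\g$‑module with highest weight $\lambda$ and top $\h$‑module $v(\lambda)$; then $L_{\g}(\lambda)$, being a subquotient, is finite‑dimensional. One route is Kac's explicit construction of such a module from a suitable $\g_{\bar 0}$‑module; a cleaner modern one uses super Schur--Weyl duality, realizing $\lambda$ inside a mixed tensor power of the natural module $\C^{n|n}$ and its dual $\Pi\C^{n|n}\cong(\C^{n|n})^*$, whose decomposition as a $\g$‑module involves exactly the $L_{\g}(\mu)$ with $\mu\in\Lambda^+$. This last step, together with the base $\q(2)$ computation, is where the real work sits: the triangular bookkeeping of parts~1 and~2 and the $\gl(n)$‑restriction are routine, whereas producing a finite‑dimensional module for an arbitrary $\lambda\in\Lambda^+$, and explaining precisely why repeated nonzero parts are obstructed, carry the essential content of Kac's theorem.
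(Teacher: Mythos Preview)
The paper does not give its own proof of this theorem: it is stated as background and attributed to Kac via the citation~\cite{Kac}, with no argument supplied. So there is nothing in the paper to compare your proposal against directly.

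That said, your sketch is the standard one and is essentially what one finds in Kac's original treatment and in textbook accounts (e.g.\ Cheng--Wang~\cite{CW}). Parts~1 and~2 are correct and complete as written; the triangular/PBW argument and the maximal-weight/Frobenius-reciprocity argument are exactly the expected ones. One minor caveat in part~2: in the category $\mathcal F$ with only parity-preserving morphisms, the simple $\h$-submodule of $V_\lambda$ might be $\Pi v(\lambda)$ rather than $v(\lambda)$, in which case $V$ is a quotient of $\Pi M_\g(\lambda)$; the paper's statement should be read up to parity (or one notes that $V$ and $\Pi V$ have the same highest weight).

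For part~3, your ``only if'' strategy via $\gl(n)$-restriction plus rank-one $\q(2)$ subalgebras is the right one, and the assertion that $L_{\q(2)}(c,c)$ is infinite-dimensional for $c\neq 0$ is correct, though in a full proof you would want to exhibit the computation (e.g.\ show that the odd lowering operator never kills the highest weight vector when $c\neq 0$). Your ``if'' direction is only gestured at: both Kac's original construction and the Sergeev--Schur--Weyl route are valid, but each requires genuine work that you have not supplied. Since the paper treats the whole theorem as a black box from~\cite{Kac}, this level of sketch is adequate for the purpose, but you should be aware that the ``if'' direction is where the substance lies and your proposal does not actually prove it.
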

We will often omit the subscript $\g$ in the notation for Verma, simple, and projective modules.

\subsection[The category F]{The category $\boldsymbol{\mathcal{F}}$}\label{section2.4}

Let $\g=\q(n)$. Denote by $\mathcal{F}^n$, or simply $\mathcal{F}$ the category consisting of finite-dimensional $\g$-supermodules semisimple over $\g_{\bar{0}}$ (so the center of $\g_{\bar{0}}$ acts semisimply), with morphisms being parity preserving. The full subcategory of $\mathcal{F}$ consisting of modules with integral weights is equivalent to the category of finite-dimensional $G$-modules, where $G$ is the algebraic supergroup with $\text{Lie}(G)=\g$ and $G_{\0}={\rm GL}(n)$.

Let $Z(\g)$ be the center of the universal enveloping algebra $U(\g)$. A central character is a~homomorphism $\chi\colon Z(U(\g))\rightarrow\C$. We say that a $\g$-module $M$ has central character $\chi$ if for any $z\in Z(\g)$, $m\in M$, there exists a positive integer $n$ such that $(z-\chi(z)\text{id})^n.m=0$. It is well known from linear algebra that any finite-dimensional indecomposable $\g$-module has a central character, hence $\mathcal{F}^n=\oplus \mathcal{F}^n_{\chi}$, where $\mathcal{F}^n_{\chi}$ is the subcategory of modules admitting central character~$\chi$. In the most cases $\mathcal{F}^n_\chi$ is indecomposable, i.e., a block in the category $\mathcal {F}^n$. The only exception is $\mathcal {F}^n_\chi$ for even $n$ and typical central character $\chi$. In this case
$\mathcal {F}^n_\chi$ is semisimple and has two non-isomorphic simple objects $L(\lambda)$ and $\Pi L(\lambda)$.

Similarly to the Lie algebra case, there is a canonical injective algebra homomorphism, the \textit{Harish-Chandra homomorphism} \cite{CW, Serg},
\[ \text{HC}\colon \ Z(\g)\hookrightarrow S(\h_{\bar{0}})^W.\]
Given any $\lambda\in\h_{\bar{0}}^*$, we define $\chi_{\lambda}\colon Z(\g)\rightarrow \C$ to be the unique homomorphism making
\[\begin{tikzcd}[column sep=small]
Z(\g) \arrow[hookrightarrow]{rr}{{\rm HC}}\arrow{rd}[swap]{\chi_{\lambda}} & & S(\h_{\bar0})^W\arrow{ld}{\lambda^W} \\
 & \C &
\end{tikzcd}
\]
commute, where $\lambda^W$ is the natural homomorphism induced by $\lambda\in\h_{\bar0}^*$. If $\chi=\chi_{\lambda}$ for some $\lambda$, we denote $\mathcal{F}_{\chi_{\lambda}}$ by $\mathcal{F}_{\lambda}$.
Given a central character $\chi_{\lambda}$ with $\lambda =(\lambda_1,\dots,\lambda_n)$, we define its \textit{weight} to be the formal sum
\[ \text{\rm wt}(\lambda):=\delta_{\lambda_1}+\dots +\delta_{\lambda_n},\]
where $\delta_i = -\delta_{-i}$ and $\delta_0=0$. A fundamental result by Sergeev \cite{Serg} implies:
\begin{thm}\label{blocks-central characters}
For $\lambda,\mu\in\h_{\bar0}^*$, $\chi_{\lambda}=\chi_{\mu}$ if and only if $\text{\rm wt}(\lambda)=\text{\rm wt}(\mu)$.
\end{thm}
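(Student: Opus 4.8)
The plan is to obtain the statement from Sergeev's description of the centre $Z(\g)$ in \cite{Serg}, after which only an elementary fact about odd power sums remains. First I would use that, by the very definition via the commuting triangle, $\chi_{\lambda}=\lambda^W\circ\mathrm{HC}$ with $\mathrm{HC}$ injective, so that $\chi_{\lambda}=\chi_{\mu}$ holds if and only if $\lambda$ and $\mu$ induce the same evaluation map on the image $\mathrm{HC}(Z(\g))\subseteq S(\h_{\bar0})^W=\C[x_1,\dots,x_n]^{S_n}$. Sergeev's theorem identifies this image with the subalgebra generated by the odd power sums $p_{2k-1}:=x_1^{2k-1}+\dots+x_n^{2k-1}$, $k\ge 1$ (no $\rho$-shift intervenes, since $\rho_{\bar0}=\rho_{\bar1}$ for $\q(n)$). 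Hence $\chi_{\lambda}=\chi_{\mu}$ if and only if $p_{2k-1}(\lambda)=p_{2k-1}(\mu)$ for all $k\ge 1$, where $p_{2k-1}(\lambda)=\lambda_1^{2k-1}+\dots+\lambda_n^{2k-1}$, and the theorem reduces to showing that this system of scalar equalities is equivalent to $\operatorname{wt}(\lambda)=\operatorname{wt}(\mu)$.

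To prove this reduced claim I would encode both sides through the multiplicities $m_c(\lambda)=|\{i\colon\lambda_i=c\}|$ for $c\in\C$, so that $p_{2k-1}(\lambda)=\sum_c m_c(\lambda)\,c^{2k-1}$ and $\operatorname{wt}(\lambda)=\sum_c m_c(\lambda)\,\delta_c$. Grouping each sum over the pairs $\{c,-c\}$ and applying $\delta_{-c}=-\delta_c$, $\delta_0=0$ and $(-c)^{2k-1}=-c^{2k-1}$, one sees that both expressions depend only on the antisymmetrized multiplicities $d_c(\lambda):=m_c(\lambda)-m_{-c}(\lambda)$ for $c\ne 0$; choosing a representative $c$ of each pair one gets $\operatorname{wt}(\lambda)=\sum_c d_c(\lambda)\,\delta_c$ and $p_{2k-1}(\lambda)=\sum_c d_c(\lambda)\,c^{2k-1}$. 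The implication $\operatorname{wt}(\lambda)=\operatorname{wt}(\mu)\Rightarrow p_{2k-1}(\lambda)=p_{2k-1}(\mu)$ for all $k$ is then immediate.

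For the converse I would put $f_c=d_c(\lambda)-d_c(\mu)$; it is nonzero for only finitely many $c$, say at pairwise distinct nonzero $c_1,\dots,c_r$ with $c_i\ne\pm c_j$ for $i\ne j$, and the hypothesis reads $\sum_{i=1}^r (f_{c_i}c_i)\,(c_i^2)^{k-1}=0$ for every $k\ge 1$. Since the $c_i^2$ are pairwise distinct, the resulting Vandermonde system has only the zero solution, forcing $f_{c_i}c_i=0$ and hence $f_{c_i}=0$ for all $i$; thus $\operatorname{wt}(\lambda)=\operatorname{wt}(\mu)$. The only substantial input here is Sergeev's computation of $\mathrm{HC}(Z(\g))$, which I would cite rather than reprove; once that is available, the remainder is the bookkeeping above, whose single genuine point is the linear independence of the sequences $\big(c_i^{2k-1}\big)_{k\ge 1}$ attached to distinct nonzero complex numbers. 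Accordingly I expect essentially all of the difficulty to sit inside the cited theorem, the bridge to $\operatorname{wt}$ being short.
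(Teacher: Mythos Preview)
Your proposal is correct. The paper itself does not give a proof of this theorem: it simply states it as an immediate consequence of Sergeev's description of $Z(\g)$ in \cite{Serg}, with no further argument. Your write-up is precisely the natural way to make that implication explicit---identify $\mathrm{HC}(Z(\g))$ with the algebra generated by odd power sums, then observe via a Vandermonde argument that equality of all odd power sums of $\lambda$ and $\mu$ is equivalent to $\operatorname{wt}(\lambda)=\operatorname{wt}(\mu)$---so there is nothing to compare beyond noting that you have filled in what the paper leaves to the reader.
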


The following classification theorem about blocks in $\mathcal{F}^3$ is important for us. It is an immediate consequence of \cite[Theorem~5.8]{Ser-ICM}.

\begin{thm} \label{Ser-blocks} $\lambda=(\lambda_1,\lambda_2,\lambda_3)\in\Lambda^+\cap\Z^3$ be a dominant integral weight and $|\lambda|$ be the number of non-zero coordinates in $\text{\rm wt}(\lambda)$.
\begin{itemize}\itemsep=0pt
\item $($the strongly typical block$)$ If $|\lambda|=3$, then $\mathcal F^3_\lambda$ is semisimple and contains one up to isomorphism simple module;
\item $($the typical block$)$ If $|\lambda|=2$, then $\mathcal F^3_\lambda$ is equivalent to the block $\mathcal{F}^1_{(0)}$ for $\q(1)$;
\item $($the standard block$)$ If $|\lambda|=1$, then $\mathcal{F}^3_{\lambda}$ is equivalent to $\mathcal{F}^3_{(1,0,0)}$;
\item $($the principal block$)$ If $|\lambda|=0$, then $\mathcal{F}^3_{\lambda}$ is equivalent to $\mathcal{F}^3_{(0,0,0)}$.\end{itemize}
\end{thm}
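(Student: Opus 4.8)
The plan is to obtain the four cases by specialising the general block classification of $\q(n)$, \cite[Theorem~5.8]{Ser-ICM}; the only work is to recast that statement in the present notation and to track parities.

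First I would pass from the central character to a combinatorial invariant. By Theorem~\ref{blocks-central characters}, the block $\mathcal{F}^3_\lambda$ is determined by $\mathrm{wt}(\lambda)=\delta_{\lambda_1}+\delta_{\lambda_2}+\delta_{\lambda_3}$; since the symbols $\delta_k$ with $k>0$ are independent and $\delta_{-k}=-\delta_k$, $\delta_0=0$, this equals $\sum_{k>0}c_k\delta_k$ with $c_k=\#\{i\colon\lambda_i=k\}-\#\{i\colon\lambda_i=-k\}$, and for dominant integral $\lambda$ one has $c_k\in\{-1,0,1\}$, so $|\lambda|=\#\{k>0\colon c_k\ne0\}\le3$. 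The four cases therefore correspond to $|\lambda|\in\{0,1,2,3\}$, and $(a,b,c)$ with $a>b>c>0$, $(a,b,0)$ with $a>b>0$, $(a,0,0)$ with $a>0$, and $(0,0,0)$ are dominant integral weights with $|\lambda|=3,2,1,0$; in particular $(1,0,0)$ and $(0,0,0)$ realise the representatives named in the theorem. It then remains to show that all blocks with a fixed value of $|\lambda|$ are equivalent, with the stated structure.

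For $|\lambda|=3$ every $\lambda_i$ is nonzero with pairwise distinct absolute values, i.e.\ $\chi_\lambda$ is strongly typical, and I would invoke that strongly typical blocks of $\q(n)$ are semisimple (\cite[Theorem~5.8]{Ser-ICM}, cf.\ \cite{P}); since $n=3$ is odd, the even-$n$ exception recalled in Section~\ref{section2.4} does not occur, so $\mathcal{F}^3_{\chi_\lambda}$ is indecomposable, hence the single simple object $L(\lambda)$ attached to the unique dominant weight $\lambda$ of the block, and as $\dim E_\lambda=3$ is odd (Lemma~\ref{dim Clifford}) we get $v(\lambda)\cong\Pi v(\lambda)$, whence $L(\lambda)\cong\Pi L(\lambda)$ and there is one simple up to isomorphism. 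For $|\lambda|\le2$ I would use that two central characters with the same $|\lambda|$ are joined by a translation functor $T=\mathrm{pr}_\mu\circ(V\otimes-)$ (with $V$ finite dimensional) and its biadjoint $T'$; the content of \cite[Theorem~5.8]{Ser-ICM} is that here $T$ and $T'$ are mutually inverse equivalences, because such a translation only slides the strongly typical ``spectator'' coordinates and never merges them with, or separates them from, the atypical coordinates equal to $0$, so no genuine wall is crossed. This gives $\mathcal{F}^3_\lambda\simeq\mathcal{F}^3_{(1,0,0)}$ for $|\lambda|=1$, $\mathcal{F}^3_\lambda\simeq\mathcal{F}^3_{(0,0,0)}$ for $|\lambda|=0$, and, after deleting both spectators from $(a,b,0)$, $\mathcal{F}^3_{(a,b,0)}\simeq\mathcal{F}^1_{(0)}$; in the last case each side has a single dominant weight with $\dim E$ even, so the two simples $L,\Pi L$ on the two sides correspond.

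The one genuinely substantial input, which I expect to be the main obstacle, is the assertion in the previous paragraph that the relevant translation functors are equivalences rather than merely an adjoint pair --- equivalently, that the atypicality defect is unchanged by these translations. Since this is precisely what \cite[Theorem~5.8]{Ser-ICM} provides, in the present paper it reduces to the bookkeeping above; everything else is elementary.
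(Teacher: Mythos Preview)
Your proposal is correct and follows exactly the paper's approach: the paper states this theorem without proof, noting only that ``it is an immediate consequence of \cite[Theorem~5.8]{Ser-ICM},'' and your write-up is precisely the bookkeeping needed to specialise that general result to $n=3$. Your extra remarks on why $L(\lambda)\cong\Pi L(\lambda)$ in the strongly typical case (via $\dim E_\lambda=3$ odd) and on the role of translation functors are accurate elaborations, but they add detail rather than a different method.
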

Furthermore, if $\lambda\in\Lambda^+$ but $\lambda\notin\Z^3$, then either $\lambda$ is typical, so $\lambda_i+\lambda_j\neq 0$ $\forall\, i,j$, or $\lambda$ has atypicality~1. In the former case the block is semisimple and has one up to isomorphism simple object. In the latter case all such blocks are equivalent to the ``half-standard'' block $\mathcal{F}_{(3/2,1/2,-1/2)}$ by \cite[Theorem~5.21]{BD}.

Finally, it is well known there are enough projective and injective objects in $\mathcal{F}$ \cite{Ser1}. Let $P_{\g}(\lambda)$ denote the projective cover of $L_{\g}(\lambda)$.

\subsection{Quivers}\label{section2.5} Let $\mathcal{F}$ be any abelian $\C$-linear category with enough projectives, finite-dimensional morphism spaces, and finite-length composition series for all objects. For us, $\mathcal{F}$ will be as in the previous subsection. The following properties are as stated in \cite[Section~1]{Ger}, which are just slight generalizations of results in \cite[Section~4.1]{Ben}.

An \textit{Ext-quiver} $Q$ for $\mathcal{F}$ is a directed graph with vertex set consisting of isomorphism classes of finite-dimensional simple objects of $\mathcal{F}$. In our case, the vertex set is $Q_0=\{L(\lambda),\Pi L(\lambda)\}$ for $\lambda\in\Lambda^+$. In particular, $Q_0$ is not $\Lambda^+$. The number of arrows between two objects $L,M\in Q_0$ will be $d_{L,M}:=\dim\Ext^1_{\mathcal{F}}(L,M)$. We define a $\C$-linear category $\C Q$ with objects being vertices~$Q_0$ and morphisms $\Hom_{\C Q}(\lambda,\mu)$ being space of formal linear combinations of paths between the two objects $\lambda$,~$\mu$. Composition of morphisms is concatenation of paths.

A \textit{system of relations} on $Q$ is a map $R$ which assigns a subspace $R(\lambda,\mu)\subset\Hom_{\C Q}(\lambda,\mu)$ to each pair of vertices $(\lambda,\mu)\in Q_0\times Q_0$ such that for any $\lambda,\mu,\nu\in Q_0$
\begin{gather*}
R(\nu,\mu)\circ\Hom_{\C Q}(\lambda,\nu) \subset R(\lambda,\mu)\qquad\text{and}\qquad
\Hom_{\C Q}(\nu,\mu)\circ R(\lambda,\nu) \subset R(\lambda,\mu).
\end{gather*}

A \textit{representation of $Q$} is a finite-dimensional vector space $V=\oplus_{\lambda\in Q_0}V_{\lambda}$ together with linear maps $\phi\colon V_{\lambda}\rightarrow V_{\mu}$ for every arrow $\phi\colon \lambda\rightarrow\mu$. Representations of $Q$ form an Abelian category denoted by $Q$-mod. Given quiver $Q$ and relations $R$, define the category $\C Q/R$ consisting of objects $\lambda\in Q_0$ and morphisms $\Hom_{\C Q/R}(\lambda,\mu):=\Hom_{\C Q}(\lambda,\mu)/R(\lambda,\mu)$. We then denote by $\C Q/R$-mod the full subcategory of $\C Q$-mod consisting of representations $V$ such that for any vertices~$\lambda$,~$\mu$, we have $\operatorname{Im}(R(\lambda,\mu)\rightarrow\Hom_{\C}(V_{\lambda},V_{\mu}))=0$.

The next proposition gives an explicit description of the relations of an Ext-quiver given the category $\mathcal{F}$, its spectroid $\mathcal{G}$, and its Ext-quiver $Q$. The \textit{spectroid} $\mathcal{G}$ is defined as the full subcategory of $\mathcal{F}$ consisting of objects which are indecomposable projectives. Let $\mathcal{G}^{{\rm op}}$ denote the opposite category: objects are that of $\mathcal{G}$ and morphisms are $\Hom_{\mathcal{G}^{{\rm op}}}(P(\lambda),P(\mu)):=\Hom_{\mathcal{G}}(P(\mu),P(\lambda))$. Let $\operatorname{rad}(P(\lambda),P(\mu))$ denote the set of all noninvertible morphisms from~$P(\lambda)$ to~$P(\mu)$. Since $P(\mu)$ is projective, such a morphism cannot be surjective and we thus conclude $\operatorname{rad}(P(\lambda),P(\mu))=\Hom_{\mathcal{F}}(P(\lambda),\operatorname{rad} P(\mu))$.
 Let $\operatorname{rad}^n(P(\lambda),P(\mu))$ be the subspace of $\operatorname{rad}(P(\lambda),P(\mu))$ consisting of sums of products of $n$ noninvertible maps between $P(\lambda)$ and $P(\mu)$. For $\lambda,\mu\in\Lambda^+$ we have a canonical isomorphism \cite[Lemma~1.2.1]{Ger}
\[\Ext^1_{\mathcal{F}}(L(\lambda),L(\mu))\cong \Hom_{\mathcal{F}}\big(P(\mu),\operatorname{rad} P(\lambda)/\operatorname{rad}^2P(\lambda)\big)^*.\]

\begin{prop} \label{relations}
Given category $\mathcal{F}$ with Ext-quiver $Q$ and spectroid $\mathcal{G}$, let $\mathcal{R}_{\lambda,\mu}$ denote the bijection from the $d_{\lambda,\mu}$ arrows of $\lambda$ to $\mu$ to the family $\{\phi^i_{\lambda,\mu}\}_{i=1}^{d_{\lambda,\mu}}$ of morphisms in $\operatorname{rad}(P(\mu), P(\lambda))$ that map onto a basis modulo $\operatorname{rad}^2(P(\mu),P(\lambda))$. Then there is a unique well-defined family of linear maps
\[\overline{\mathcal{R}}_{\lambda,\mu}\colon \ \Hom_{\mathbf{C}Q}(\lambda,\mu)\rightarrow\Hom_{\mathcal F}(P(\mu),P(\lambda)),\]
such that $\overline{\mathcal{R}}_{\lambda,\mu}(\phi_{\lambda,\mu}^i)=\mathcal{R}_{\lambda,\mu}(\phi_{\lambda,\mu}^i)$ and which is compatible with composition.

Moreover, the map
\[ R\colon \ (\lambda,\mu)\rightarrow\operatorname{Ker}\overline{\mathcal{R}}_{\lambda,\mu}\] is a system of relations on~$Q$ and the categories $\C Q/R$ and $\mathcal{G}^{{\rm op}}$ are equivalent.
\end{prop}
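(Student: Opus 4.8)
The statement is the classical theorem of Gabriel presenting a finite-length module category through its Ext-quiver with relations, in the mild generalisation of \cite[Section~1]{Ger} (see also \cite[Section~4.1]{Ben}); the plan is to record the construction and pinpoint the single step where actual work is needed, deferring the routine verifications to those references. First I would define $\overline{\mathcal{R}}_{\lambda,\mu}$ on a path $p$ from $\lambda=\nu_0$ to $\mu=\nu_k$ through arrows $a_i\colon \nu_{i-1}\to\nu_i$ (the empty path at $\lambda$ included, and sent to $\operatorname{id}_{P(\lambda)}$) by composing in $\mathcal{F}$ the morphisms $\mathcal{R}_{\nu_{i-1},\nu_i}(a_i)\in\operatorname{rad}(P(\nu_i),P(\nu_{i-1}))$ along the chain $P(\mu)=P(\nu_k)\to P(\nu_{k-1})\to\cdots\to P(\nu_0)=P(\lambda)$, which produces an element of $\Hom_{\mathcal F}(P(\mu),P(\lambda))=\Hom_{\mathcal{G}^{\mathrm{op}}}(P(\lambda),P(\mu))$, and then extend $\mathbf{C}$-linearly to $\Hom_{\mathbf{C}Q}(\lambda,\mu)$. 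Because concatenation of paths and composition in $\mathcal{F}$ are associative and identities are preserved, $\overline{\mathcal{R}}$ is a well-defined functor $\mathbf{C}Q\to\mathcal{G}^{\mathrm{op}}$ restricting to $\mathcal{R}$ on arrows; and it is the unique such family since paths span every $\Hom_{\mathbf{C}Q}(\lambda,\mu)$ and compatibility with composition then forces the value on each path.

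Next I would check that $R(\lambda,\mu):=\Ker\overline{\mathcal{R}}_{\lambda,\mu}$ is a system of relations: for $f\in R(\nu,\mu)$ and $g\in\Hom_{\mathbf{C}Q}(\lambda,\nu)$ one has $\overline{\mathcal{R}}_{\lambda,\mu}(f\circ g)=\overline{\mathcal{R}}_{\nu,\mu}(f)\circ\overline{\mathcal{R}}_{\lambda,\nu}(g)=0$, hence $f\circ g\in R(\lambda,\mu)$, and symmetrically $\Hom_{\mathbf{C}Q}(\nu,\mu)\circ R(\lambda,\nu)\subset R(\lambda,\mu)$ — exactly the two closure conditions in the definition. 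Therefore $\overline{\mathcal{R}}$ descends to a functor $F\colon \mathbf{C}Q/R\to\mathcal{G}^{\mathrm{op}}$ with $F(\lambda)=P(\lambda)$, and $F$ is faithful by the very choice of $R$ and essentially surjective because, by definition, the objects of $\mathcal{G}^{\mathrm{op}}$ are precisely the indecomposable projectives $P(\lambda)$, $\lambda\in\Lambda^+$.

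The one substantial step — and the main obstacle — is fullness of $F$, i.e.\ surjectivity of every $\overline{\mathcal{R}}_{\lambda,\mu}$. Since $\End_{\mathcal F}(P(\lambda))$ is local it decomposes as $\mathbf{C}\cdot\operatorname{id}_{P(\lambda)}\oplus\operatorname{rad}(P(\lambda),P(\lambda))$, and for $\lambda\neq\mu$ every morphism $P(\mu)\to P(\lambda)$ is noninvertible, hence lies in $\operatorname{rad}(P(\mu),P(\lambda))$; as the empty path already realises $\operatorname{id}_{P(\lambda)}$, it suffices to show that positive-length paths surject onto $\operatorname{rad}(P(\mu),P(\lambda))$ for all $\lambda,\mu$. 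All objects of $\mathcal{F}$ have finite length, so the radical is nilpotent on projectives and $\operatorname{rad}^N(P(\mu),P(\lambda))=0$ for $N$ large. I would then prove by downward induction on $n$ that $\operatorname{rad}^n(P(\mu),P(\lambda))$ is spanned by the $\overline{\mathcal{R}}$-images of paths of length $\geq n$: the base case $n=N$ is vacuous, and in the inductive step one uses that $\operatorname{rad}^n$ is, by definition, spanned by $n$-fold products of noninvertible maps, that a noninvertible map between two indecomposable projectives lies in their radical, and that for every pair $(\alpha,\beta)$ the images of $\mathcal{R}_{\beta,\alpha}$ span $\operatorname{rad}(P(\alpha),P(\beta))$ modulo $\operatorname{rad}^2(P(\alpha),P(\beta))$. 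This last point rests on the canonical isomorphism $\Ext^1_{\mathcal F}(L(\beta),L(\alpha))\cong\Hom_{\mathcal F}(P(\alpha),\operatorname{rad}P(\beta)/\operatorname{rad}^2P(\beta))^*$ recorded before the proposition, which forces the number of arrows $\beta\to\alpha$ to equal $\dim\operatorname{rad}(P(\alpha),P(\beta))/\operatorname{rad}^2(P(\alpha),P(\beta))$, so that $\mathcal{R}_{\beta,\alpha}$ is genuinely a bijection onto a basis and no generator is omitted. Taking $n=1$ gives surjectivity onto $\operatorname{rad}(P(\mu),P(\lambda))$, and combined with faithfulness and essential surjectivity this makes $F$ an equivalence $\mathbf{C}Q/R\simeq\mathcal{G}^{\mathrm{op}}$.
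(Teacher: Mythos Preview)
Your argument is correct and is precisely the standard proof of Gabriel's presentation theorem as recorded in \cite[Section~1]{Ger} and \cite[Section~4.1]{Ben}; the paper itself does not supply a proof of this proposition but simply cites those references, so there is nothing further to compare.
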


The system of relations is determined up to a choice of $R_{\lambda,\mu}$ which is not canonical in general. But, we may multiply the $\mathcal{R}_{\lambda,\mu}(\phi_{\lambda,\mu}^i)$ by nonzero scalars to make the relations ``look nice''. There is then an additional proposition, \cite[Proposition~1.2.2]{Ger}, which states $\mathcal{G}^{{\rm op}}$ is equivalent to~$\mathcal{F}$. This then implies the following important theorem of Ext-quivers we use.

\begin{thm}[{\cite[Theorem 1.4.1]{Ger}}]\label{quiver = representations}
Let $\mathcal{F}$ be as above, $Q$ its Ext-quiver, and $R$ be a system of relations as defined in Proposition~{\rm \ref{relations}}. Then there exists an equivalence of categories
\[\mathbf{e}\colon \ \mathcal{F}\xrightarrow{\sim} \mathbf CQ/R-{\rm mod}
\]
such that
\[ \mathbf{e}(M) = \bigoplus_{\lambda\in \Lambda^+}\Hom_{\mathcal{F}}(P(\lambda),M).\]
\end{thm}

\subsection{Main theorem}\label{section2.6} In the statement of the main theorems, we will provide the Ext-quivers of various blocks. The relations are given by labelling the $\dim\Ext^1_{\g}(L(\lambda),L(\mu))$ arrows between $L(\lambda), L(\mu)\in Q$ by $\alpha\in\Hom_{Q}(L(\lambda),L(\mu))$ which is then identified (by some choice of scalar) with $\alpha\in\Hom_{\g}(P(\lambda),\\ \operatorname{rad} P(\mu)/\operatorname{rad}^2P(\mu))$ via Proposition~\ref{relations}.

\begin{thm}\label{MainTheorem sq}
Every block $\mathcal{F}_{\lambda}$ of the category $\mathcal{F}$ of finite-dimensional $\mathfrak{sq}(3)$-modules semisimple over $\mathfrak{sq}(3)_{\bar{0}}$ is equivalent to the category of finite-dimensional modules over one of the following algebras given by a quiver and relations:
\begin{enumerate}\itemsep=0pt
\item[$1.$] A typical block $\lambda = (\lambda_1,\lambda_2,\lambda_3)$ such that $\lambda_i+\lambda_j\neq 0$ for any $i\neq j$, and $\frac{1}{\lambda_1}+\frac{1}{\lambda_2}+\frac{1}{\lambda_3}\neq 0$ or exactly one $\lambda_i=0$
\[\xymatrix{\bullet .}
\]

\item[$2.$] A strongly typical block $\lambda = (\lambda_1,\lambda_2,\lambda_3)$ such that $\lambda_i+\lambda_j\neq 0$, $\lambda_i\neq0$ for any $i$, $j$ and $\frac{1}{\lambda_1}+\frac{1}{\lambda_2}+\frac{1}{\lambda_3}=0$
\[\xymatrix{\bullet\ar@(dl,ul)|{h} }
\]
with relations
\begin{equation*}h^2=0.\end{equation*}
\item[$3.$] The ``half-standard'' block $\lambda = (\frac{3}{2},\frac{1}{2},-\frac{1}{2})$
\[
\xymatrix{\bullet \ar@/^/[r]^{a}& \bullet
\ar@/^/[l]^{b}
\ar@/^/[r]^{a}& \bullet\ar@/^/[l]^{b}
\ar@/^/[r]^{a}&\cdots\ar@/^/[l]^{b},}
\]
where vertices are labeled $L_{\sq}\big(\frac{3}{2},\frac{1}{2},-\frac{1}{2}\big)$, $L_{\sq}\big(\frac{5}{2},\frac{3}{2},-\frac{5}{2}\big)$, $L_{\sq}\big(\frac{7}{2},\frac{3}{2},-\frac{7}{2}\big)$, $\dots$
with relations
 \begin{equation*}a^2=b^2=0, \qquad ab=ba.\end{equation*}
\item[$4.$] The standard block $\lambda = (1,0,0)$
\[
\xymatrix{\cdots \ar@/^/[r]^{\alpha}&\bullet \ar@/^/[l]^{b} \ar@/^/[r]^{a}& \bullet
\ar@/^/[l]^{b}
\ar@/^/[r]^{a}& \bullet\ar@/^/[l]^{b}
\ar@/^/[r]^{a}&\cdots\ar@/^/[l]^{b},}
\]
where vertices are labeled $\dots$, $\Pi L_{\sq}(3,1,-3)$, $\Pi L_{\sq}(2,1,-2)$, $L_{\sq}(1,0,0)$, $L_{\sq}(2,1,-2)$, $L_{\sq}(3,1,-3)$, $\dots$
with relations
\begin{equation*}a^2=b^2=0,\qquad ab=ba.\end{equation*}

\item[$5.$] The principal block $\lambda = (0,0,0)$
\[
\xymatrix{
\bullet \ar[r]^{a} & \bullet \ar[dl]^(.55){c} \ar[r]^{b} & \bullet\ar@/^/[r]^x \ar[dl]^(.55){d} &\bullet \ar@/^/[l]^y \ar@/^/[r]^x&\cdots \ar@/^/[l]^y\\
\bullet \ar[r]_{a} & \bullet \ar[r]_{b} \ar[ul]_(.55){c} & \bullet\ar[ul]_(.55){d}\ar@/^/[r]^x &\bullet \ar@/^/[l]^y \ar@/^/[r]^x& \cdots\ar@/^/[l]^y,
}\]
where vertices are labeled $L_{\sq}(1,0,-1)$, $L_{\sq}(0)$, $L_{\sq}(2,0,-2)$, $L_{\sq}(3,0,-3)$, $\dots$ in top row and $\Pi L_{\sq}(1,0,-1)$, $\Pi L_{\sq}(0,0,0)$, $\Pi L_{\sq}(2,0,-2)$, $\Pi L_{\sq}(3,0,-3)$, $\dots$ in bottom row. Then the relations are
\begin{gather*}
 x^2=y^2=0, \qquad xb= dy=bd= ca=0, \\ xy=yx,\qquad yx=bacd, \qquad
dbac=acdb. \end{gather*}
\end{enumerate}
\end{thm}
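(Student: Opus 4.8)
The plan is to go through the list of blocks in Theorem~\ref{Ser-blocks}, together with the two families of non-integral blocks described in the remarks following it, handling each case in turn; in every case one first determines the simple objects of the block, then the radical filtrations of their projective covers, and finally the relations among the arrows by means of Proposition~\ref{relations} and Theorem~\ref{quiver = representations}. The parallel computation for $\q(3)$ (Theorem~\ref{MainTheorem}) shares most of the work, and the difference between $\mathfrak{sq}(3)$ and $\q(3)$ enters only through the Cartan subsuperalgebra $\h'$, hence through the Clifford modules $E'_\lambda$ of Lemma~\ref{dim Clifford}.

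Cases~1--4 are comparatively light. In the typical and strongly typical cases ($|\lambda|\in\{2,3\}$) Theorem~\ref{blocks-central characters} shows the block contains a single simple module $L_{\sq}(\lambda)$ up to isomorphism and parity, so the only question is the existence of a self-extension; by Theorem~\ref{selfextensions2} this is governed by the parity of $\dim E'_\lambda$, and Lemma~\ref{dim Clifford} shows that when all $\lambda_i\neq0$ and $\frac{1}{\lambda_1}+\frac{1}{\lambda_2}+\frac{1}{\lambda_3}=0$ one gets $\dim E'_\lambda=1$, forcing a loop $h$, while nilpotency of the block forces $h^2=0$; in the remaining typical cases $\dim E'_\lambda$ is $0$ or even and the block is a point. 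For the half-standard block (Case~3) I would invoke \cite[Theorem~5.21]{BD} to reduce to $\mathcal F_{(3/2,1/2,-1/2)}$, an atypicality-one block, compute the characters of its simples and Vermas, and then run the same argument as for the integral standard block: the virtual BGG reciprocity of Theorem~\ref{BGG Reciprocity} gives the radical layers of the projectives, from which the $A_\infty$ quiver is read off, and $a^2=b^2=0$, $ab=ba$ follow by computing the compositions of the basis morphisms between projectives. For the standard block (Case~4), Proposition~\ref{tadmissible} identifies the standard block of $\q(3)$ with the principal block of $\q(2)$ up to identification of two vertices; since the latter is known by \cite{Maz} (and recoverable by the present methods), one reads off the $\q(3)$, hence $\mathfrak{sq}(3)$, standard-block quiver with the stated relations.

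The principal block (Case~5), which is the only wild block, carries the bulk of the argument. The steps are: (i)~compute the characters of the simples in the block using geometric induction from the flag supermanifold and the character algorithms cited in the introduction; (ii)~apply Theorem~\ref{BGG Reciprocity} to obtain the radical filtration of every indecomposable projective $P_{\sq}(\mu)$ in the block, i.e.\ the diagrams of Appendix~\ref{appendixA}; (iii)~extract the Ext-quiver from the second radical layers via $\dim\Ext^1(L(\mu),L(\nu))=[\operatorname{rad}P(\mu)/\operatorname{rad}^2P(\mu):L(\nu)]$, producing a finite ``core'' supported at the weights $(0,0,0)$, $(1,0,-1)$, $(2,0,-2)$ glued to two $A_\infty$ tails; (iv)~determine the relations using Proposition~\ref{relations} by computing, inside the projectives, the compositions of the basis morphisms attached to the arrows, for which one uses relative cohomology $H^\bullet(\g,\g_{\bar 0};-)$ and geometric induction to evaluate the relevant $\Ext^2$ spaces and Yoneda products, yielding $x^2=y^2=0$, $xb=dy=bd=ca=0$, $xy=yx$, $yx=bacd$, and $dbac=acdb$. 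Theorem~\ref{quiver = representations} then converts the pair $(Q,R)$ into the asserted equivalence of categories.

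The main obstacle is step~(iv) for the principal block: showing that the listed relations generate \emph{all} relations and that no unexpected arrow appears where the finite core meets the $A_\infty$ tails. This amounts to controlling $\Ext^2$ between the simples, and the multiplication on $\Ext^\bullet$, precisely enough, which is exactly the content of the cohomological computations of Section~\ref{section6}; a useful consistency check throughout is that the Cartan matrix of the resulting quiver algebra must reproduce the composition-series multiplicities of the projectives found in step~(ii).
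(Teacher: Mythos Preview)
Your outline captures the overall architecture of the paper, but there is a genuine circularity in your treatment of the principal block that the paper avoids, and one key technical step is missing.

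In step~(ii) you claim that Theorem~\ref{BGG Reciprocity} yields the radical filtrations of the $P_{\sq}(\mu)$, and then in step~(iii) you read off $\Ext^1$ from the second radical layer. But BGG reciprocity only gives the class $[P(\lambda)]$ in the Grothendieck group, i.e.\ the composition multiplicities $[P(\lambda):L(\mu)]_\Pi$; it says nothing about which layer each constituent sits in. The paper runs the logic in the opposite order: first the multiplicities from~(\ref{Projective multiplicity}), then $\Ext^1$ by independent arguments (Lemma~\ref{generalext} via Proposition~\ref{m^0 greater than Ext} for $a\ge 3$, and the delicate contradiction argument of Lemma~\ref{specialext} for the core at $a\in\{0,1,2\}$, using self-duality of $P(0)$ from Lemma~\ref{symmetrysq}), and only then the radical filtrations in Section~\ref{section6.2}. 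Lemma~\ref{specialext} is the crux of the whole principal-block computation and has no analogue in your plan; without it you cannot decide, for instance, whether $\Ext^1_\g(L(2),\C)$ or $\Ext^1_\g(\Pi L(2),\C)$ is the nonzero one, and hence cannot draw the quiver.

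Your step~(iv) also diverges from the paper. The relations are not obtained from $\Ext^2$ or Yoneda products; once the radical filtrations are known, Section~\ref{section6.2} determines the relations by the elementary observation $\dim\Hom_\g(P(a),M)=[M:L(a)]$, tracking the images of compositions of the basis morphisms and rescaling to normalize the constants. No higher cohomology enters. Two smaller points: for $\mathfrak{sq}(3)$ every block is tame (Corollary~\ref{tame wild}), so the principal block is not wild here; and ``nilpotency of the block'' is not quite the reason $h^2=0$ in Case~2---one needs that $P_{\sq}(\lambda)$ has length~$2$, which follows from the explicit self-extension constructed in Theorem~\ref{selfextensions2} together with typicality.
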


\begin{thm}\label{MainTheorem}
Every block $\mathcal{F}_{\lambda}$ of the category $\mathcal{F}$ of finite-dimensional $\q(3)$-modules semisimple over $\q(3)_{\bar{0}}$ is equivalent to the category of finite-dimensional modules over one of the following algebras given by quiver and relations:
\begin{enumerate}\itemsep=0pt
\item[$1.$] A strongly typical block: $\lambda = (\lambda_1,\lambda_2,\lambda_3)$ such that $\lambda_i+\lambda_j\neq 0$ and $\lambda_i\neq0$ for any $i$, $j$
\[\xymatrix{\bullet .}
\]
\item[$2.$] A typical block: $\lambda =(\lambda_1,\lambda_2,\lambda_3)$ such that some $\lambda_i=0$ and $\lambda_j+\lambda_k\neq 0$ for any $j$, $k$
\[\xymatrix{\bullet \ar@/^/[r]^{a}& \bullet
\ar@/^/[l]^{b} }\]
with relations
 \begin{equation*}ab=ba=0.\end{equation*}
\item[$3.$] The ``half-standard'' block $\lambda = \big(\frac{3}{2},\frac{1}{2},-\frac{1}{2}\big)$
\[
\xymatrix{\bullet \ar@/^/[r]^{a}& \bullet
\ar@/^/[l]^{b}
\ar@/^/[r]^{a}& \bullet\ar@/^/[l]^{b}
\ar@/^/[r]^{a}&\dots\ar@/^/[l]^{b},}
\]
where vertices are labeled $L\big(\frac{3}{2},\frac{1}{2},-\frac{1}{2}\big)$, $L\big(\frac{5}{2},\frac{3}{2},-\frac{5}{2}\big)$, $L\big(\frac{7}{2},\frac{3}{2},-\frac{7}{2}\big)$, $\dots$
with relations \begin{equation*}a^2=b^2=0, \qquad ab=ba.\end{equation*}
\item[$4.$] The standard block $\lambda = (1,0,0)$
\[
\xymatrix{\bullet \ar@(dl,ul)|{h}\ar@/^/[r]^{a}& \bullet
\ar@/^/[l]^{b}
\ar@/^/[r]^{x}& \bullet\ar@/^/[l]^{y}
\ar@/^/[r]^{x}&\dots\ar@/^/[l]^{y},}
\]
where vertices are labeled $L(1,0,0)$, $L(2,1,-2)$, $L(3,1,-3)$, $\dots$
with relations
\begin{gather*}
x^2=y^2=0,\qquad xa=by=ab=0,\\
h^2=0,\qquad xy=yx,\qquad bah=hba.
\end{gather*}

\item[$5.$] The principal block $\lambda = (0,0,0)$
\[
\xymatrix{
\bullet \ar[r]^{a} \ar@<-0.5ex>[d] & \bullet \ar@<-0.5ex>[d]\ar[dl]^(.55){c} \ar[r]^{b} & \bullet\ar@<-0.5ex>[d]\ar@/^/[r]^x \ar[dl]^(.55){d} &\bullet \ar@<-0.5ex>[d] \ar@/^/[l]^y \ar@/^/[r]^x&\ar@<-0.5ex>[d]\cdots \ar@/^/[l]^y\\
\bullet \ar[u]\ar[r]_{a} & \bullet \ar[u]\ar[r]_{b} \ar_(.55){c}[ul] & \bullet\ar[u]\ar_(.55){d}[ul]\ar@/^/[r]^x &\bullet \ar[u]\ar@/^/[l]^y \ar@/^/[r]^x& \cdots\ar@/^/[l]^y,\ar[u]
}\]
where vertices are labeled $L(1,0,-1)$, $L(0)$, $L(2,0,-2)$, $L(3,0,-3)$, $\dots$ in top row and $\Pi L(1,0,-1)$, $\Pi L(0,0,0)$, $\Pi L(2,0,-2)$, $\Pi L(3,0,-3)$, $\dots$ in bottom row. Then labelling all vertical arrows by $\theta$, the relations are:
\begin{gather*}
 x^2=y^2=0, \qquad xb= dy=bd= ca=0,\\
 xy=yx,\qquad yx=bacd, \qquad dbac=acdb, \\
 \theta^2=0, \qquad \theta\gamma=\gamma\theta \qquad \text{for} \quad \gamma\in\{a,b,c,d, x,y\}.
\end{gather*}
\end{enumerate}
\end{thm}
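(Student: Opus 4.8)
By Theorem~\ref{Ser-blocks} and the remark following it, every block of $\mathcal F=\mathcal F^3$ for $\q(3)$ is equivalent to one of six model blocks, so it suffices to compute the Ext-quiver $Q$ and the relations for each. Once $Q$ and a choice of $\operatorname{rad}/\operatorname{rad}^2$-representatives are known, the relations come from Proposition~\ref{relations} and the equivalence $\mathcal F\simeq\C Q/R$-mod from Theorem~\ref{quiver = representations}; so the real content is computing $\dim\Ext^1$ between simples and the radical filtrations of the projective covers $P(\lambda)$. The first three cases are immediate. If $|\lambda|=3$, then $\mathcal F_\lambda$ is semisimple, and since $n=3$ is odd $\dim E_\lambda$ is odd (Lemma~\ref{dim Clifford}), so $L(\lambda)\cong\Pi L(\lambda)$ and we get the one-vertex quiver of case~1. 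If $|\lambda|=2$, then $\mathcal F_\lambda\simeq\mathcal F^1_{(0)}$ for $\q(1)$, where $U(\q(1))$ modulo the relevant central character is the exterior algebra $\Lambda(\bar H)$ whose regular representation is the length-two indecomposable projective with top $L(0)$ and socle $\Pi L(0)$; this gives the two-vertex quiver of case~2 with $ab=ba=0$ (the vanishing of $\operatorname{rad}^2$). For the half-standard block, \cite[Theorem~5.21]{BD} identifies every non-integral atypicality-one block with $\mathcal F_{(3/2,1/2,-1/2)}$ and reduces it to an integral computation, yielding the $A_\infty$ half-line quiver of case~3 with $a^2=b^2=0$, $ab=ba$.

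\textbf{Standard block $(1,0,0)$.} By Proposition~\ref{tadmissible}, the Ext-quiver of the standard block of $\q(3)$ is obtained from that of the principal block of $\q(2)$ by identifying certain vertices; combined with Mazorchuk's description of the $\q(2)$ principal block~\cite{Maz}, this produces the underlying graph of case~4, namely the $A_\infty$ half-line with a loop $h$ at $L(1,0,0)$, and Theorem~\ref{selfextensions} confirms that $L(1,0,0)$ (and no other vertex of this block) carries a self-extension. The relations are then read off from the radical filtrations of the $P(\lambda)$: the ``virtual'' BGG reciprocity of Theorem~\ref{BGG Reciprocity} gives the multiplicities $[P(\lambda):M(\mu)]$, hence $\operatorname{ch}P(\lambda)$, and together with the known simple characters \cite{PS1,PS2,B1,B2} and geometric-induction/translation arguments this fixes the layers (the diagrams of Appendix~\ref{appendixA}); Proposition~\ref{relations} then converts these into the displayed relations of case~4 after rescaling the arrows.

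\textbf{Principal block $(0,0,0)$.} This is the bulk of the work. The vertices are $L(k,0,-k),\Pi L(k,0,-k)$ for $k\ge0$, and the plan is: (i) compute $\dim\Ext^1_{\mathcal F}(L(\lambda),L(\mu))$ for all pairs of simples using relative Lie superalgebra cohomology and geometric induction, i.e.\ via cohomology of the flag supermanifold, producing the horizontal arrows $a,b,c,d,x,y$; (ii) invoke Theorem~\ref{selfextensions} to get exactly one vertical arrow $\theta\colon L(\mu)\to\Pi L(\mu)$ at each vertex; (iii) use Theorem~\ref{BGG Reciprocity} and the known simple characters to get $\operatorname{ch}P(\lambda)$, hence the composition factors of each $P(\lambda)$, and then pin down its radical filtration (Appendix~\ref{appendixA}); (iv) feed these filtrations into Proposition~\ref{relations}, via the isomorphism $\Ext^1(L(\lambda),L(\mu))\cong\Hom(P(\mu),\operatorname{rad}P(\lambda)/\operatorname{rad}^2P(\lambda))^*$, to extract the relations of case~5, and conclude with Theorem~\ref{quiver = representations}.

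\textbf{Main obstacle.} The difficulty is concentrated in steps (iii)--(iv) for the principal block, which is \emph{wild}: there is no Kazhdan--Lusztig-type shortcut, so the full radical-layer structure of every projective must be extracted by hand from the cohomological input, and the finer relations are invisible at the level of composition factors. Establishing the commutativity relations $xy=yx$, $dbac=acdb$, $\theta\gamma=\gamma\theta$ and the ``hidden'' relation $yx=bacd$ — in particular that $\theta$ is central in $\C Q/R$ — is exactly what the geometric-induction plus virtual-BGG-reciprocity package is designed to do. A persistent technical point throughout is that, because $\h$ is non-abelian, the usual Lie-algebra cohomology vanishing arguments must be run while tracking the Clifford modules $v(\lambda)$ (Lemma~\ref{dim Clifford}); this is where the integral vs.\ non-integral and ``number of zero coordinates'' distinctions between blocks enter the $\Ext^1$ computation.
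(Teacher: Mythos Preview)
Your outline is broadly correct for cases 1--3 and for identifying the vertices and arrows in cases 4--5, but it misses the key structural step the paper uses to obtain the \emph{relations} in the standard and principal $\q(3)$ blocks. The paper does not compute these directly from the radical filtrations of the $P_{\q}(\lambda)$; instead it first computes the $\mathfrak{sq}(3)$ quivers and relations (where there are no self-extensions in the relevant range, so the projectives are half the size and the filtrations are tractable), and then invokes the general ring-theoretic Lemma~\ref{general}: if $M$ is a projective $\mathfrak{sq}(n)$-module with $\Pi M\cong M$ and $\theta\in\End_{\q}(\Ind M)$ satisfies $\Ker\theta=\operatorname{Im}\theta$ transverse to $1\otimes M$, then $\End_{\q}(\Ind M)\cong\End_{\mathfrak{sq}}(M)\otimes\C[\theta]/(\theta^2)$. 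This is precisely what produces $\theta\gamma=\gamma\theta$ in case~5 and $bah=hba$ in case~4; it is not something that ``geometric induction plus virtual BGG reciprocity'' yields, since those tools only give composition multiplicities and hence dimensions of $\Hom$-spaces, not the finer multiplicative structure.

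Concretely, your step~(iv) for the principal block would stall: knowing the radical layers of $P_{\q}(a)$ tells you $\dim\Hom_{\q}(P_{\q}(a),P_{\q}(b))$, which constrains which monomials in the arrows can be nonzero, but it does not by itself force $\theta x=x\theta$, $\theta a=a\theta$, etc. The paper's route is to first verify the ``horizontal'' relations for $\mathfrak{sq}(3)$ (Section~\ref{section6.2}), then construct $\theta$ explicitly on each $P_{\q}(\lambda)=\Ind P_{\mathfrak{sq}}(\lambda)$ (including a separate argument for $\lambda=0$ using $\Ind^{\q(3)}_{\q(3)_{\bar 0}}\C$), and finally apply Lemma~\ref{general}. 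Similarly in the standard block, the existence of $h$ with $h^2=0$ is not read off from a filtration but is built from a specific module $R=TR'$ obtained by translating a module $R'$ from the principal block (Section~\ref{section5.3.3}); the relation $hu=uh$ again comes from Lemma~\ref{general}. You should incorporate this $\mathfrak{sq}\to\q$ reduction; without it, the centrality of $\theta$ remains unproven.
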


\begin{cor}\label{tame wild} All blocks of $\mathfrak{sq}(3)$ are tame. The typical and standard $\q(3)$ blocks are tame. The principal $\q(3)$ block is wild.
\end{cor}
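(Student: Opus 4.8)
The idea is to read off the representation type directly from the explicit presentations in Theorems~\ref{MainTheorem sq} and~\ref{MainTheorem}: by those theorems each block is equivalent to $\C Q/R$-mod for the indicated quiver $Q$ and relations $R$, so it remains only to classify the representation types of these finitely many algebras.

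For the tame assertions I would argue block by block. The one--vertex blocks (the strongly typical $\q(3)$ block and the typical $\sq(3)$ block), the algebra $\C[h]/(h^2)$ (the strongly typical $\sq(3)$ block), and the two--vertex algebra with $ab=ba=0$ (the typical $\q(3)$ block) are of finite representation type, hence tame. Every remaining block has an infinite quiver, but each object has finite length and hence is a module over a finite truncation $A_{[m,n]}$ obtained by killing the idempotents outside an interval of vertices; since for each $d$ all $d$--dimensional indecomposables already occur over $A_{[m,n]}$ once $n-m$ is large, it suffices to see each $A_{[m,n]}$ is tame. For the half--standard and standard blocks of $\sq(3)$ and $\q(3)$ and for the principal block of $\sq(3)$ I would verify from the relations that the conditions defining a \emph{special biserial} algebra hold: at most two arrows enter and at most two leave each vertex, and for every arrow $\beta$ there is at most one arrow $\gamma$ with $\gamma\beta\notin R$ and at most one $\alpha$ with $\beta\alpha\notin R$. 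The monomial relations ($x^2$, $y^2$, $h^2$, $xb$, $dy$, $bd$, $ca$, $xa$, $ab$, $\dots$) leave at most one admissible continuation at each arrow, and the surviving relations $xy-yx$, $yx-bacd$, $dbac-acdb$, $bah-hba$ are commutativity relations; so these algebras are special biserial, hence tame. This settles all blocks of $\sq(3)$ and the typical, half--standard, and standard blocks of $\q(3)$.

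For the wildness of the principal $\q(3)$ block $A:=\C Q/R$ (part~5 of Theorem~\ref{MainTheorem}) the plan is to exhibit a wild subquotient. Let $e$ be the primitive idempotent at the vertex $L(0)$, so that $eAe\cong\operatorname{End}_{\q(3)}(P(L(0)))^{\mathrm{op}}$ by Theorem~\ref{quiver = representations}. I would compute $eAe$ from the relations: using $\theta^2=0$ together with $\theta\gamma=\gamma\theta$ one slides the $\theta$'s together, so any closed path at $L(0)$ involving two or more factors $\theta$ vanishes; the nontrivial closed paths at $L(0)$ are then exactly $ac\theta$, $db\theta$ (one factor $\theta$) and $dbac=acdb$ (no $\theta$), these three being linearly independent, and every product of two of them lies in $\operatorname{rad}^2$ while being forced into the span of $\{ac\theta,db\theta,dbac\}$ (no new closed path at $L(0)$ is created), hence is $0$. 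Thus $eAe\cong\C\langle u,v,w\rangle/(u,v,w)^2$, the local algebra with $\operatorname{rad}^2=0$ and three--dimensional radical, whose separated quiver is the quiver with two vertices and three parallel arrows; this algebra is wild. Since $eAe$ is of wild type, so is $A$: the functor $Ae\otimes_{eAe}(-)\colon eAe\text{-mod}\to A\text{-mod}$ is fully faithful (one has $e\cdot(Ae\otimes_{eAe}M)\cong M$), so composing it with a representation embedding $\C\langle x,y\rangle\text{-mod}\to eAe\text{-mod}$ yields one into $A$-mod. Hence the principal $\q(3)$ block is wild.

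The step I expect to be the main obstacle is the identification of $eAe$: one must show that the Loewy length of $P(L(0))$ is small enough that all products of the three closed paths vanish (equivalently $[P(L(0)):L(0)]=4$ and $\operatorname{rad}^2(eAe)=0$), so that $eAe$ is genuinely the three--loop radical--square--zero algebra rather than a smaller, tame, local algebra; this relies on the explicit structure of $P(L(0))$ from Section~\ref{section3} and Appendix~\ref{appendixA}. The tameness verifications, by contrast, are routine checks of the special biserial axioms against the listed relations.
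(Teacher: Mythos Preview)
Your tameness argument is the same as the paper's: both invoke the special biserial criterion (citing Erdmann) after reading off the quivers and relations from Theorems~\ref{MainTheorem sq} and~\ref{MainTheorem}. Your extra care with the infinite quiver (reducing to finite truncations $A_{[m,n]}$) is a point the paper glosses over; it is correct and worth making explicit.

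For wildness of the $\q(3)$ principal block your route genuinely differs from the paper's. The paper passes to the radical--square--zero quotient $kQ/R'$ (all paths of length $\geq 2$ are killed) and observes that its separated quiver is not a union of Dynkin or affine $ADE$ diagrams, whence that quotient---and hence $kQ/R$---is wild. This uses nothing beyond the shape of~$Q$. You instead cut down by the idempotent $e$ at $L(0)$ and identify $eAe\cong\End_{\q(3)}(P(0))^{\mathrm{op}}$ with the local three--loop algebra $\C\langle u,v,w\rangle/(u,v,w)^2$; wildness of $eAe$ then implies wildness of $A$ via the fully faithful functor $Ae\otimes_{eAe}(-)$. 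Your approach is correct but relies on additional input, namely $[P(0):L(0)]=4$ and the Loewy length $6$ of $P(0)$ from Appendix~\ref{appendixA}, to force $\operatorname{rad}^2(eAe)=0$. One sentence in your sketch is imprecise: you say the products of the three loops are ``forced into the span of $\{ac\theta,db\theta,dbac\}$, hence $0$'', but the length grading is broken by the relation $yx=bacd$, so this is not a grading argument. The honest reason, which you do identify at the end, is Loewy length: each of the three loops has length $\geq 3$, so any product has length $\geq 6$ and therefore lands in $\operatorname{rad}^6 P(0)=0$. Alternatively one can verify the vanishing directly from the relations (e.g.\ $(acdb)^2=ac\,(dbac)\,db=ac\,(acdb)\,db$, and the inner $b\circ d$ is $bd=0$). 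Either way your conclusion stands. The trade--off is that the paper's separated--quiver argument is shorter and uses only the quiver, while yours exhibits a concrete four--dimensional wild idempotent subalgebra at the cost of invoking the projective structure computed later in the paper.
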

\begin{proof}
Observe that all blocks of $\sq(3)$ have special biserial quivers and hence are tame \cite{Erd}. The same holds for the two typical and standard blocks of $\q(3)$. We show the $\q(3)$ principal block is wild by ``duplicating the quiver'' \cite[Chapter~9]{GS3}. Namely, label the vertices of the quiver by $Q_0=\{1,2,3,\dots\}\cup\{-1,-2,-3,\dots\}$ corresponding to top and bottom row, respectively. Let~$Q_1$ denote the arrows and $R$ the relations. Define
$Q_0':=Q_0\cup \{1',2',3',\dots\}\cup\{-1',-2',-3',\dots\}$ and set of arrows as
\[ Q_1'=\{(i\rightarrow j')\colon (i\rightarrow j)\in Q_1\}.\]
Let $Q=(Q_0,Q_1,R)$ and $Q'=(Q_0',Q_1')$. Then $k(Q)/R'$, $R'$ being relation defined by any product of 2 arrows is~0, is a quotient of $k(Q)/R$. Note that the indecomposable representations of $(Q_0,Q_1,R')$ are in bijection with that of~$Q'$. But~$Q'$ is not a union of affine and Dynkin diagrams of type $A$, $D$, $E$ (each vertex $i, i>3$ has~3 edges coming out), so it is wild and this implies $Q$ is wild.
\end{proof}

One can also see from the description of quivers and radical filtrations of indecomposable projective modules in Appendix~\ref{appendixA} which of the blocks are highest weight categories.
\begin{cor}\label{highest weight category}
For $\mathfrak{sq}(3)$, only the blocks in cases $(1)$~$($typical$)$, $(3)$~$($half-standard$)$ and $(4)$~$($standard$)$ of Theorem~{\rm \ref{MainTheorem sq}} are highest weight categories. For~$\q(3)$, only the blocks in cases $(1)$~$($typical$)$ and $(3)$~$($half-standard$)$ are highest weight categories.
\end{cor}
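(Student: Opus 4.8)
The plan is to match the defining data of a highest weight category against the quivers of Theorems~\ref{MainTheorem sq} and~\ref{MainTheorem} and the radical filtrations of the indecomposable projectives collected in Appendix~\ref{appendixA}. Recall that $\mathcal F_\lambda$ is a highest weight category precisely when its simple objects carry an interval-finite partial order $\preceq$ for which, with $\Delta(\mu)$ denoting the largest quotient of $P(\mu)$ all of whose composition factors are $\preceq\mu$, each $P(\mu)$ admits a filtration with top section $\Delta(\mu)$ (occurring once) and all other sections of the form $\Delta(\nu)$ with $\nu\succ\mu$. I will use two standard consequences. First, a highest weight category has no self-extensions of simples: from the exact sequence $0\to\operatorname{rad}\Delta(\mu)\to\Delta(\mu)\to L(\mu)\to 0$ and $\operatorname{Hom}(\operatorname{rad}\Delta(\mu),L(\mu))=0$ one obtains an embedding $\Ext^1(L(\mu),L(\mu))\hookrightarrow\Ext^1(\Delta(\mu),L(\mu))$, and the latter vanishes because the kernel of the projection $P(\mu)\to\Delta(\mu)$ is filtered by standard modules $\Delta(\nu)$ with $\nu\succ\mu$, each of which has top $L(\nu)\neq L(\mu)$. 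Second, a finite-dimensional self-injective quasi-hereditary algebra is semisimple (such an algebra has finite global dimension, and a self-injective algebra of finite global dimension is semisimple).

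The immediate negative cases: case~(2) of Theorem~\ref{MainTheorem sq} and case~(4) of Theorem~\ref{MainTheorem} each carry a loop $h$ with $h^2=0$, hence a nonzero self-extension of a simple, so neither is a highest weight category. Case~(2) of Theorem~\ref{MainTheorem} is the four-dimensional algebra $\C Q/(ab,ba)$ on the two-cycle $Q$, which is self-injective but not semisimple, so it is not a highest weight category; the same reasoning applies to $\C[h]/(h^2)$ in case~(2) of Theorem~\ref{MainTheorem sq}.

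Next, the positive cases. The semisimple blocks, case~(1) in both theorems, are trivially highest weight categories. For the half-standard blocks (case~(3) in both theorems) and for the standard $\sq(3)$ block (case~(4) of Theorem~\ref{MainTheorem sq}), order the simples $\cdots\prec\lambda_{i-1}\prec\lambda_i\prec\lambda_{i+1}\prec\cdots$ along the chain of vertices (a one- or two-sided chain, depending on the block); this order is interval-finite. By Appendix~\ref{appendixA} the projective $P(\lambda_i)$ has radical series $L(\lambda_i)\mid\bigl(L(\lambda_{i-1})\oplus L(\lambda_{i+1})\bigr)\mid L(\lambda_i)$, with $L(\lambda_{i-1})$ omitted when $\lambda_i$ is an endpoint. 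The forced standard module is therefore $\Delta(\lambda_i)=\bigl(L(\lambda_i)\mid L(\lambda_{i-1})\bigr)$ (respectively $\Delta(\lambda_1)=L(\lambda_1)$ at an endpoint), and the submodule of $P(\lambda_i)$ generated by the copy of $L(\lambda_{i+1})$ in its second radical layer is isomorphic to $\Delta(\lambda_{i+1})$, with quotient $\Delta(\lambda_i)$. Hence $P(\lambda_i)$ has the required standard filtration, and these blocks are highest weight categories.

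It remains to rule out the principal blocks, case~(5) in both theorems, and this is the main obstacle: their quivers have no loops, so the first criterion above does not apply, and one must exclude \emph{every} interval-finite order at once. Here I would argue directly from the radical filtrations in Appendix~\ref{appendixA}. The arrows of the Ext-quiver fix the set of $\preceq$-comparable pairs, leaving only the freedom to interchange $L(\mu)$ with $\Pi L(\mu)$ at each vertex, i.e.\ to post-compose a hypothetical order with the autoequivalence $\Pi$; for each of these finitely many possibilities one exhibits a projective---most conveniently the projective cover of the simple of highest weight $(0,0,0)$ together with the arrows incident to it---whose submodule structure admits no filtration with top the forced $\Delta$ and all higher sections indexed strictly above. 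Equivalently, the Cartan matrix read off from Appendix~\ref{appendixA} cannot be factored as $D^{\mathrm t}D$ with $D$ triangular for any such order, which is the failure of BGG reciprocity $[P(\mu):\Delta(\nu)]=[\Delta(\nu):L(\mu)]$. The real difficulty is to make this step uniform rather than a finite case analysis; I expect the cleanest route is to apply this numerical reciprocity to the first few projectives of the principal block, where the multiplicities recorded in Appendix~\ref{appendixA} already contradict any triangular choice of standard modules.
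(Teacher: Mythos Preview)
Your treatment of the non-principal blocks is fine and essentially equivalent to the paper's; in particular your self-injective argument for the $\q(3)$ typical block and the direct verification of standard filtrations for the zig-zag blocks are valid alternatives to the paper's references.

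The gap is in the principal block. Your proposed route---classify the possible interval-finite orders up to the $\Pi$-symmetry and then exhibit, case by case, a projective with no admissible $\Delta$-filtration, or equivalently show the Cartan matrix has no triangular $D^{\mathrm t}D$ factorization---is not carried out, and you yourself flag it as ``the real difficulty''. The claim that ``the arrows of the Ext-quiver fix the set of $\preceq$-comparable pairs, leaving only the freedom to interchange $L(\mu)$ with $\Pi L(\mu)$'' is too optimistic: arrows force comparability of their endpoints, but transitivity and the cross-arrows $a,b,c,d$ between the two rows of the quiver generate many more comparable pairs, and there is no a priori reason the order must be a chain or reduce to finitely many cases. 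So as written this is a sketch of a strategy, not a proof.

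The paper bypasses this entirely with a superdimension argument. All indecomposable projectives have superdimension zero, and every simple except $L(0)$ and $\Pi L(0)$ has superdimension zero. If the block were highest weight, then since the standard cover of $L(0)$ and that of $\Pi L(0)$ cannot each contain the other as a composition factor, at least one standard object $A_\mu$ has $\sdim A_\mu\neq 0$. But $P(a)$ for $a\geq 3$ contains neither $L(0)$ nor $\Pi L(0)$, so only finitely many $A_\mu$ can have nonzero superdimension; choosing $\mu$ maximal among these gives $\sdim P_\mu=\sdim A_\mu+\sum_{\nu\succ\mu}c_\nu\sdim A_\nu=\sdim A_\mu\neq 0$, a contradiction. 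This handles all possible orders at once and is the idea you are missing.
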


\begin{proof} For different types of typical blocks the statement is obvious from the quiver.
 A half-integral block is a highest weight category both for~$\q(3)$ and~$\mathfrak{sq}(3)$. The former is also a consequence of general result in~\cite{BD} for blocks in the category of finite-dimensional
 representations of~$\q(n)$ with half-integral weights. The~$\mathfrak{sq}(3)$ standard block is also a highest weight category since it is equivalent to well
 known~$A_{\infty}$ quiver which also defines the principal block for~$\mathfrak{gl}(1|1)$~\cite{Ger}.

 The standard block for $\q(3)$ is not highest weight due to existence of self-extension.

 Let us prove now that the principal blocks for $\q(3)$ and $\mathfrak{sq}(3)$ are not highest weight categories. Note that all simple objects except $L(0)$ and $\Pi L(0)$ have zero superdimension and all projective modules have
 zero superdimension. Assume for the sake of contradiction that the principal block is a highest weight category. The isomorphism classes of simple objects $L_\mu$ are enumerated by poset~$\mathcal M$.
 Let $A_\mu$ and $P_\mu$ denote the standard and projective cover, respectively, of a simple object $L_\mu$.
 If the standard cover of $L(0)$ contain a simple constituent~$\Pi L(0)$ then the standard cover of $\Pi L(0)$ can not contain a simple constituent $L(0)$. Thus, at least one standard object has a non-zero superdimension.
 On the other hand, $P(a)$ and $\Pi P(a)$ for $a\geq 3$ do not have $L(0)$ and $\Pi L(0)$ among its simple constituents. Thus, the set of $\mu$ such that $\sdim a_\mu\neq 0$ is finite. Let us choose a maximal $\mu$ such
 $\sdim A_{\mu}\neq 0$. Then \[ \sdim P_{\mu}=\sdim A_{\mu}+\sum_{\nu>\mu}c_\nu\sdim A_{\nu}\neq 0.\]
 A contradiction. \end{proof}

\section{Geometric preliminaries and BGG reciprocity}\label{section3}

\subsection{Relative cohomology of Lie superalgebras}\label{section3.1}
Let $\mathfrak{t}\subset\g$ be a Lie subsuperalgebra and $M$ a $\g$-module. For $p\geq 0$, define
\[
C^{p}(\g, \mathfrak{t}; M)=\Hom_{\mathfrak{t}}(\wedge^{p}(\g/\mathfrak{t}), M),
\]
where $\wedge^{p}(\g)$ is the \emph{super} wedge product. The differential maps $d^{p}\colon C^{p}(\g, \mathfrak{t}; M)\to C^{p+1}(\g, \mathfrak{t}; M)$ are defined in the same way as for Lie algebras,
see for example \cite[Section~2.2]{BKN}. The {\it relative cohomology} are defined by
\[ \operatorname{H}^{p}(\g, \mathfrak{t}; M)=\Ker d^{p}/\operatorname{Im} d^{p-1}.\]

We will be interested in the case when $\mathfrak{t}=\g_{\bar{0}}$. Then the relative cohomology describe the extension groups in the category $\mathcal F$ of finite-dimensional $\g$-modules semisimple over $\g_{\bar 0}$.
More precisely, we have the following relation:
\[\Ext_{\mathcal{F}}^{p}(M,N)\cong \operatorname{H}^{p}(\g,\g_{\bar 0}; M^{*} \otimes N).\]

From here on out, we will use $\Ext_{\g}^i(-,-)$ to denote $\Ext_{\mathcal{F}}^{i}(-,-)$. For conciseness, we often write $\Ext_{\q}$ or $\Ext_{\mathfrak{sq}}$ to denote $\Ext_{\q(n)}$ or $\Ext_{\mathfrak{sq}(n)}$.

\begin{thm}\label{Ext C,C} Let $\g=\q(n)$. Then
\begin{gather*} \Ext^i_{\q(n)}(\C,\C)\cong \begin{cases} S^i(\g_{\bar{0}}^*)^{\g_{\bar0}}& \text{if $i$ even}, \\
				0 & \text{else}, \end{cases}\!\qquad \text{and}\!\!\qquad \Ext^i_{\q(n)}(\C,\Pi\C)\cong \begin{cases} S^i(\g_{\bar{0}}^*)^{\g_{\bar0}}& \text{if $i$ odd}, \\
				0 & \text{else}. \end{cases}
\end{gather*}

\end{thm}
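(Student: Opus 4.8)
The plan is to compute the relative cohomology $\operatorname{H}^p(\g,\g_{\bar 0};\C)$ and $\operatorname{H}^p(\g,\g_{\bar 0};\Pi\C)$ directly from the Chevalley--Eilenberg complex $C^\bullet(\g,\g_{\bar 0};M)=\Hom_{\g_{\bar 0}}(\wedge^\bullet(\g/\g_{\bar 0}),M)$. Since $\g/\g_{\bar 0}\cong\g_{\bar 1}$ is purely odd, the super wedge power $\wedge^p(\g_{\bar 1})$ is the degree-$p$ part of the \emph{symmetric} algebra $S^p(\g_{\bar 1})$ viewed as a $\g_{\bar 0}$-module, so $C^p(\g,\g_{\bar 0};\C)\cong \big(S^p(\g_{\bar 1})^*\big)^{\g_{\bar 0}}$ and $C^p(\g,\g_{\bar 0};\Pi\C)\cong \Pi\big(S^p(\g_{\bar 1})^*\big)^{\g_{\bar 0}}$. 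The first point to establish is that the differential $d^p$ vanishes identically on this complex. This follows because $d$ is built from two pieces: the bracket $\g_{\bar 1}\wedge\g_{\bar 1}\to\g_{\bar 0}$ composed with the action on $M$, and the action term; for $M=\C$ (trivial module) the action term dies, and the bracket term, evaluated on a $\g_{\bar 0}$-invariant cochain, also dies because the map factors through the coboundary on $\g_{\bar 0}$ with trivial coefficients, which one checks to be zero after using invariance. Thus $\operatorname{H}^p(\g,\g_{\bar 0};\C)=\big(S^p(\g_{\bar 1})^*\big)^{\g_{\bar 0}}$ and likewise with a parity shift for $\Pi\C$.

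The second ingredient is to identify $\g_{\bar 1}$ as a $\g_{\bar 0}$-module. For $\g=\q(n)$ we have $\g_{\bar 0}\cong\gl(n)$ and $\g_{\bar 1}\cong\gl(n)$ as the adjoint module; moreover the odd trace form gives a $\g_{\bar 0}$-equivariant identification $\g_{\bar 1}\cong\g_{\bar 1}^*\cong\g_{\bar 0}^*$ (using $\q(n)^*\cong\Pi\q(n)$, stated in Section~\ref{section2.2}). Hence $\big(S^p(\g_{\bar 1})^*\big)^{\g_{\bar 0}}\cong S^p(\g_{\bar 0}^*)^{\g_{\bar 0}}$ as claimed. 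Combining with the vanishing of the differential, we get $\Ext^p_{\q(n)}(\C,\C)\cong\operatorname{H}^p(\g,\g_{\bar 0};\C)\cong S^p(\g_{\bar 0}^*)^{\g_{\bar 0}}$.

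The remaining issue is the parity bookkeeping, which is where the even/odd dichotomy enters. A class in $\Ext^p_{\mathcal F}(\C,\C)$ must be represented by a \emph{parity-preserving} (i.e. even) cochain in $\Hom_{\g_{\bar 0}}(\wedge^p\g_{\bar 1},\C)$. Because each of the $p$ tensor factors of $\wedge^p\g_{\bar 1}$ is odd, the space $\wedge^p\g_{\bar 1}=S^p(\g_{\bar 1})$ sits in parity $\bar p\pmod 2$; thus an even homomorphism to $\C$ exists only when $p$ is even, and an even homomorphism to $\Pi\C$ only when $p$ is odd. This yields exactly the case split in the statement: $\Ext^i(\C,\C)\cong S^i(\g_{\bar 0}^*)^{\g_{\bar 0}}$ for $i$ even and $0$ otherwise, and $\Ext^i(\C,\Pi\C)\cong S^i(\g_{\bar 0}^*)^{\g_{\bar 0}}$ for $i$ odd and $0$ otherwise.

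The main obstacle I anticipate is the careful verification that $d^p\equiv 0$ on the invariant subcomplex: one has to write out the Chevalley--Eilenberg differential in the super/relative setting, track the Koszul signs coming from the odd variables, and confirm that both the action term and the structure-constant term annihilate $\g_{\bar 0}$-invariants. Everything else is formal, modulo correctly interpreting ``$\wedge^p$ of an odd space $=$ $S^p$'' and the parity shifts. It may be cleanest to argue the vanishing of $d$ abstractly: the relative cohomology $\operatorname{H}^\bullet(\g,\g_{\bar 0};\C)$ is computed by a complex on which $\g_{\bar 0}$ acts trivially up to homotopy and whose differential is $\g_{\bar 0}$-equivariant and lands in a space with no invariants in the relevant degrees, forcing $d=0$ on invariants; but a direct computation is also short and is probably what the authors do.
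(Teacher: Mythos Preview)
Your proposal is essentially the paper's argument, but you are making the vanishing of the differential harder than it needs to be. Once you do the parity bookkeeping (your third paragraph), you have already shown that $C^i(\g,\g_{\bar 0};\C)=0$ for $i$ odd and $C^i(\g,\g_{\bar 0};\Pi\C)=0$ for $i$ even. Since $d^i$ has degree $+1$, its source or target is zero in every degree, so $d\equiv 0$ for free; this is exactly what the paper means by ``the differential is obviously zero.'' Your direct attack on the Chevalley--Eilenberg formula is unnecessary, and the phrasing ``the map factors through the coboundary on $\g_{\bar 0}$ with trivial coefficients'' is muddled: the clean statement is simply that $[\g_{\bar 1},\g_{\bar 1}]\subset\g_{\bar 0}$ projects to zero in $\g/\g_{\bar 0}$, so the bracket term in the \emph{relative} differential vanishes identically, with no need to invoke $\g_{\bar 0}$-invariance. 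Either argument works, but reorder your proof so that the parity count comes first and the vanishing of $d$ is an immediate corollary; then the ``main obstacle'' you anticipate disappears.
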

\begin{proof} Note that $\g_1\cong\Pi\g_1$ as a $\g_{\bar 0}$-module and therefore $\Lambda^i(\g_{\bar 1}^*)\cong \Pi^i S^i(\g_{\bar 0}^*)$. Therefore
\begin{gather*}
C^i(\g,\g_{\bar 0};\mathbf C)\cong \begin{cases} S^i(\g_{\bar{0}}^*)^{\g_{\bar0}}& \text{if $i$ even}, \\
				0 & \text{else},\end{cases}\qquad \text{and}\qquad C^i(\g,\g_{\bar 0};\Pi\C)\cong \begin{cases} S^i(\g_{\bar{0}}^*)^{\g_{\bar0}}& \text{if $i$ odd}, \\
				0 & \text{else}.\end{cases}
\end{gather*}
The differential is obviously zero and the statement follows.
\end{proof}

\begin{Remark} One can also use the $\mathbf Z_2$-graded version of relative cohomology like in \cite{BKN}. It is more suitable for the superversion of the category $\mathcal F$ where odd morphisms are allowed.
\end{Remark}

\subsection{Geometric induction}\label{section3.2}
We next provide a few facts about geometric induction following the exposition in \cite{GS,PS1}. Let $\p$ be any parabolic subsuperalgebra of $\g$ containing~$\mathfrak{b}$. Let $G=Q(n)$, and $P$, $B$ be the corresponding Lie supergroups of $\p$, $\mathfrak{b}$. For a $P$ -module~$V$, we denote by the calligraphic letter $\mathcal{V}$ the vector bundle $G \times_P V$ over the generalized grassmannian $G/P$. See~\cite{Man} for the construction. Note that the space of sections of $\mathcal{V}$ on any open set has a natural structure of a $
\g$-module; in other words the sheaf of sections of $\mathcal{V}$ is a $\g$-sheaf. Therefore the cohomology groups $H^i(G/P, \mathcal{V})$ are $\g$-modules. Define the geometric induction functor $\Gamma_i$ from category of $\p$-modules to category of $\g$-modules as
\[ \Gamma_i(G/P,V):= H^i(G/P,\mathcal{V^*})^*.\]

It is also possible to define $\Gamma_i(G/P, V)$ without the need of proving the rather technical question of existence of $G/P$. Namely, consider the Zuckerman functor from the category of $P$-modules to $G$-modules
 defined by
\[ H^0(G/P,V):=\Gamma_{\mathfrak{g}_{\0}}(\Hom_{U(\mathfrak{p})}(U(\g),V)),\]
where $\Gamma_{\g_{\0}}(M)$ denotes the set of $\g_{\0}$-finite vectors of $\g$-module $M$. One can show easily that $H^0(G/P,V)$ has a unique $G$-module structure compatible with the $\g$-action. It is also straightforward that
$H^0(G/P,V)$ is left exact and the right
adjoint to the restriction functor $G\text{\rm -mod}\to P\text{\rm -mod}$. We define $H^i(G/P,\cdot)$ to be its right derived functors. Using this definition we can define $\Gamma_i(G/P,V)$ for any $V$
whose weights are in $\Lambda$.

We state some well known results.

\begin{prop}[\cite{GS, Jan}]\label{induction functor properties}
The functor $\Gamma_i$ satisfies the following properties.
\begin{enumerate}\itemsep=0pt
\item[$1.$] For any short exact sequence of $P$-modules
\[ 0\rightarrow U\rightarrow V\rightarrow W\rightarrow 0,\]
there is a long exact sequence of $\g$-modules
\[ \cdots\rightarrow\Gamma_1(G/P,W)\rightarrow\Gamma_0(G/P, U)\rightarrow\Gamma_0(G/P,V)\rightarrow\Gamma_0(G/P, W)\rightarrow 0.\]
\item[$2.$] For a $P$-module $V$ and a $\g$-module $M$,
\[\Gamma_i(G/P,V\otimes M) = \Gamma_i(G/P,V)\otimes M.\]

\item[$3.$] $\Gamma_0(G/P, V)$ is the maximal finite-dimensional quotient of $M_{\p}(V):=U(\g)\otimes_{U(\p)} V$ in the sense that any finite-dimensional quotient of $M_{\p}(V)$ is a quotient of $\Gamma_0(G/P,V)$.
\end{enumerate}
\end{prop}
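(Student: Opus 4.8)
The plan is to work throughout with the algebraic (Zuckerman) definition $H^0(G/P,V)=\Gamma_{\g_{\bar0}}\big(\Hom_{U(\p)}(U(\g),V)\big)$ and its right derived functors, so that $\Gamma_i(G/P,V)=H^i(G/P,\mathcal V^{*})^{*}$, and to reduce each of the three statements to standard exactness and adjunction facts for (co)induction, dualizing at the end. The two inputs that must be imported from \cite{GS,Jan,Man} are: that this agrees with sheaf cohomology on Manin's supervariety $G/P$; and that $G/P$ is ``projective'', so that the cohomology of an equivariant vector bundle is finite-dimensional in each degree and vanishes above $\dim G/P$.

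For part $(1)$: applying $(-)^{*}$ to $0\to U\to V\to W\to 0$ gives a short exact sequence $0\to W^{*}\to V^{*}\to U^{*}\to 0$ of $\p$-modules (duality is exact on finite-dimensional modules). Since $U(\g)$ is free as a right $U(\p)$-module by the PBW theorem, $\Hom_{U(\p)}(U(\g),-)$ is exact, so $H^{\bullet}(G/P,-)$ is a $\delta$-functor; the associated long exact sequence reads $\cdots\to H^i(G/P,\mathcal W^{*})\to H^i(G/P,\mathcal V^{*})\to H^i(G/P,\mathcal U^{*})\to H^{i+1}(G/P,\mathcal W^{*})\to\cdots$, beginning with the injection $H^0(G/P,\mathcal W^{*})\hookrightarrow H^0(G/P,\mathcal V^{*})$. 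Dualizing (exact, and each term is finite-dimensional) reverses all arrows and produces the stated long exact sequence, which now terminates with $\Gamma_0(G/P,W)\to 0$.

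For part $(2)$: the tensor identity for coinduction, $\Hom_{U(\p)}(U(\g),V)\otimes M\cong\Hom_{U(\p)}(U(\g),V\otimes M|_{\p})$ as $\g$-modules (the isomorphism built from the coproduct and antipode of $U(\g)$, exactly as in the ungraded projection formula), is an isomorphism of exact functors of $V$; since $M$ is finite-dimensional and semisimple over $\g_{\bar0}$, taking $\g_{\bar0}$-finite vectors commutes with $-\otimes M$, so $H^0(G/P,V\otimes M)\cong H^0(G/P,V)\otimes M$ and the same holds for the derived functors because $-\otimes M$ is exact. Dualizing via $(V\otimes M)^{*}\cong V^{*}\otimes M^{*}$ and $\big(H^i(G/P,\mathcal V^{*})\otimes M^{*}\big)^{*}\cong H^i(G/P,\mathcal V^{*})^{*}\otimes M$ yields $\Gamma_i(G/P,V\otimes M)\cong\Gamma_i(G/P,V)\otimes M$.

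For part $(3)$: using that $H^0(G/P,-)$ is right adjoint to restriction, that $H^0$ of the projective supervariety $G/P$ is finite-dimensional, and duality on finite-dimensional modules, for any finite-dimensional $\g$-module $N$ we get
$\Hom_\g\big(\Gamma_0(G/P,V),N\big)\cong\Hom_\g\big(N^{*},H^0(G/P,\mathcal V^{*})\big)\cong\Hom_\p(N^{*},V^{*})\cong\Hom_\p(V,N)\cong\Hom_\g\big(M_\p(V),N\big)$,
the last isomorphism being the induction--restriction adjunction. By Yoneda this natural isomorphism is induced by a canonical $\g$-map $M_\p(V)\to\Gamma_0(G/P,V)$; applying the isomorphism to $N=\Gamma_0(G/P,V)/\operatorname{im}$ forces the image to be everything, so the map is surjective, while the isomorphism itself says every finite-dimensional quotient of $M_\p(V)$ factors through it. Hence $\Gamma_0(G/P,V)$ is the maximal finite-dimensional quotient of $M_\p(V)$. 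The only real obstacle is foundational rather than computational: one must have in hand the equivalence of the geometric and algebraic definitions of $\Gamma_i$, the finiteness and vanishing of cohomology on $G/P$, and exactness of the associated-bundle construction together with triviality of $\mathcal M$ for $\g$-modules $M$; granting these (which is what \cite{GS,Jan,Man} provide), all three claims are formal.
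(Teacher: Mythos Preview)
The paper does not actually prove this proposition: it is stated with the citation \cite{GS,Jan} under the heading ``We state some well known results,'' so there is no in-paper argument to compare against. Your proof is correct and is precisely the standard route one would extract from those references: derive the long exact sequence from the $\delta$-functor structure of $H^\bullet(G/P,-)$ and dualize, get the tensor identity from the coinduction projection formula plus exactness of $-\otimes M$, and obtain the maximal-quotient property from the adjunction $\Hom_\g(\Gamma_0(G/P,V),N)\cong\Hom_\g(M_\p(V),N)$ for finite-dimensional $N$. One small wording issue: the clause ``$\Hom_{U(\p)}(U(\g),-)$ is exact, so $H^\bullet(G/P,-)$ is a $\delta$-functor'' is slightly misleading, since it is the left-exactness of the composite $\Gamma_{\g_{\bar0}}\circ\Hom_{U(\p)}(U(\g),-)$ (together with enough injectives) that yields the $\delta$-functor; the exactness of the inner functor is only used to identify the derived functors of the composite. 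Your acknowledgement that finiteness of $H^i(G/P,-)$ must be imported is exactly right and is the only nonformal input.
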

If $G=Q(n)$, then all parabolic subgroups containing the standard Borel subgroup $B$ are in bijection with those of ${\rm GL}(n)$. Hence they are enumerated by partitions. The Levi subgroup~$L$ of parabolic~$P$ is isomorphic to $Q(m_1)\times \dots\times Q(m_k)$ with $m_1+\dots+m_k=n$. A weight $\lambda=(\lambda_1,\dots,\lambda_n)$ is called $\p$-typical if
\[ \lambda_i+\lambda_j=0 \qquad \text{implies}\qquad m_1+\dots+m_s<i,j\leq m_1+\dots+m_{s+1}.\]

\begin{prop}[{typical lemma, \cite[Theorem~2]{PS1}}]\label{typicallemma}
Let $P$ be any parabolic supergroup containing $B$ and suppose $\lambda\in\Lambda^+$ is $\p$-typical, where $\p:= Lie(P)$. Then
\[ \Gamma_i(G/P, L_{\p}(\lambda)) = \begin{cases}
 L(\lambda) &\text{if } i=0, \\
 0 &\text{if } i>0.
\end{cases}\]
\end{prop}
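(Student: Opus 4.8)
The plan is to prove the ``typical lemma'' (Proposition~\ref{typicallemma}) by reducing it to a known vanishing and non-vanishing statement about cohomology of line (super)bundles on $G/P$, using the fact that a $\p$-typical weight gives a module $L_\p(\lambda)$ whose behaviour under geometric induction is governed entirely by the ``even'' Grassmannian $G_{\bar0}/P_{\bar0}$. First I would recall that for the queer supergroup $G=Q(n)$ one has $G/P \cong G_{\bar0}/P_{\bar0}$ as a topological space, and the structure sheaf carries an odd part coming from $\Lambda(\mathfrak{n}_{\bar1}^-)$ where $\mathfrak n^-$ is the nilradical opposite to $\p$. Thus the sheaf $\mathcal V^*$ associated to $V=L_\p(\lambda)$ has a filtration whose associated graded is a direct sum of sheaves $\mathcal{W} \otimes \Lambda^j(\mathfrak n_{\bar1}^-)^*$ with $\mathcal W$ built from $L_{\p_{\bar0}}(\mu)$ for various weights $\mu$ obtained from $\lambda$ by subtracting sums of odd roots of $\mathfrak n^-$.

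\medskip

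The key step is the observation that $\p$-typicality of $\lambda$ forces the Clifford module $v_\p(\lambda)$ for the Levi to be ``as large as possible'': on each queer block $Q(m_s)$ of the Levi the relevant coordinates of $\lambda$ have no cancelling pairs, so $L_\p(\lambda)\cong L_{\p_{\bar0}}(\lambda)\otimes(\text{Clifford module})$ is, up to the Clifford factor, induced from an irreducible $\p_{\bar0}$-module of highest weight $\lambda$. Then I would invoke Bott's theorem for $G_{\bar0}/P_{\bar0}={\rm GL}(n)/P_{\bar0}$: since $\lambda\in\Lambda^+$ is $\g$-dominant integral, $\lambda+\rho_0$ is regular dominant, so $H^i(G_{\bar0}/P_{\bar0}, \mathcal L_\mu)$ for the line bundles $\mathcal L_\mu$ appearing in the associated graded vanishes in all degrees except when $\mu+\rho_0$ is dominant, which among the weights $\mu = \lambda - (\text{sum of odd roots})$ happens exactly for $\mu=\lambda$ and only in degree $i=0$. (This is where $\p$-typicality is essential: it is precisely the condition guaranteeing that none of the shifted weights $\lambda-\sum\alpha_k+\rho_0$ lands on a wall or in another dominant chamber, so no spurious higher cohomology or lower-degree contributions appear.) Assembling the spectral sequence / long exact sequences coming from the filtration then gives $\Gamma_i(G/P,L_\p(\lambda))=0$ for $i>0$ and $\Gamma_0(G/P,L_\p(\lambda))$ a module with simple top $L(\lambda)$ and no other composition factors, hence equal to $L(\lambda)$; finiteness and the identification with the maximal finite-dimensional quotient of $M_\p(L_\p(\lambda))$ follow from part~3 of Proposition~\ref{induction functor properties}.

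\medskip

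I expect the main obstacle to be the bookkeeping of the weights $\mu = \lambda - \sum_{\alpha} \alpha$ (sums over subsets of odd roots of $\mathfrak n^-$) and verifying that, under the $\p$-typicality hypothesis, $\mu+\rho_0$ fails to be dominant for every nonempty subset — i.e.\ that the only surviving term in the whole filtration is the bottom one in cohomological degree zero. One has to rule out that a subtraction of several odd roots accidentally re-enters the dominant chamber; the integrality and dominance of $\lambda$ together with the precise form of $\rho_0$ for $\q(n)$ make this a finite combinatorial check, but it is the delicate heart of the argument. A secondary technical point is justifying the filtration of $\mathcal V^*$ and the convergence of the associated spectral sequence in the super setting, for which I would cite the construction of $G\times_P V$ in \cite{Man} and the formal properties collected in Proposition~\ref{induction functor properties} rather than redo it; once the $E_1$-page is concentrated in a single spot the spectral sequence degenerates and the conclusion is immediate.
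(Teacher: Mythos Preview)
The paper does not prove this proposition; it is quoted from \cite[Theorem~2]{PS1}, so there is no in-paper argument against which to compare yours directly.

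Your framework---restrict to the underlying even flag variety $G_{\bar0}/P_{\bar0}$, filter the induced sheaf by exterior powers of the odd (co)normal bundle, apply classical Borel--Weil--Bott to each graded piece, and reassemble via the associated spectral sequence---is a reasonable opening and is in the spirit of the super Borel--Weil--Bott machinery of \cite{PS1}. The gap is in the combinatorial core you flag as ``the delicate heart''. You assert that $\p$-typicality guarantees that for every nonempty $S\subset\Phi^+$ the shifted weight $\mu+\rho_0$, with $\mu=\lambda-\sum_{\alpha\in S}\alpha$, contributes no cohomology, so that only the $S=\emptyset$ term survives on the $E_1$-page. This is false in both possible ways. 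For $\q(2)$, $P=B$, and typical $\lambda=(3,1)$, taking $S=\{\ep_1-\ep_2\}$ gives $\mu=(2,2)$ with $\mu+\rho_0=(5/2,3/2)$ regular \emph{dominant}, so a nonzero $H^0$ survives (and indeed $L_{\gl(2)}(2,2)$ occurs in $L_{\q(2)}(3,1)|_{\gl(2)}$, so it must). For $\q(4)$ with typical $\lambda=(10,3,2,-9)$ and $S=\{\ep_2-\ep_3,\ep_2-\ep_4\}$ one finds $\mu=(10,1,3,-8)$ and $\mu+\rho_0=(23/2,3/2,5/2,-19/2)$, which is regular but lies in a \emph{nondominant} Weyl chamber, so by Bott $H^1\neq 0$. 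Hence neither ``only $S=\emptyset$ contributes'' nor even ``all nonzero contributions sit in cohomological degree~$0$'' holds at~$E_1$, and $\p$-typicality is not the combinatorial condition you describe.

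The proposition is of course still true, but the extra $E_1$-terms must be dealt with---either by exhibiting the differentials that kill them, or (as in \cite{PS1}) by exploiting what $\p$-typicality actually buys: a concrete description of $L_\p(\lambda)$ as a $\p$-module that lets one control the induced module and its Euler characteristic directly. Your setup is correct, but the step you rightly identify as the crux does not reduce to the finite dominance check you propose, and a different idea is needed to close it.
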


Now, for any parabolic supergroup $P$ containing $B$, define the multiplicity
\[ m_{P}^i(\lambda,\mu) := [\Gamma_i(G/P, L_{\p}(\lambda)) : L_{\g}(\mu)].\]

\begin{prop}\label{m^0 greater than Ext}
If $\lambda>\mu$, then
\[ m^0_B(\lambda,\mu)\geq\dim\Ext_{\g}^1(L(\lambda),L(\mu)).\]
\end{prop}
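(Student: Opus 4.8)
The plan is to connect $\Ext^1_{\g}(L(\lambda),L(\mu))$ to the structure of $\Gamma_0(G/B, L_{\mathfrak{b}}(\lambda))$, using the universal property of geometric induction from Proposition~\ref{induction functor properties}(3) together with the relation between relative cohomology and $\Ext$ recorded in Section~\ref{section3.1}. First I would recall that $M_{\mathfrak{b}}(\lambda) = U(\g)\otimes_{U(\mathfrak{b})} v(\lambda)$ is the Verma module $M_{\g}(\lambda)$ of Section~\ref{section2.3}, whose unique simple quotient is $L(\lambda)$; by Proposition~\ref{induction functor properties}(3), $\Gamma_0(G/B, L_{\mathfrak{b}}(\lambda)) = \Gamma_0(G/B, v(\lambda))$ is the \emph{maximal finite-dimensional quotient} of $M_{\g}(\lambda)$. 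Call this module $V(\lambda)$. Since $L(\lambda)$ is the unique simple quotient of $M_{\g}(\lambda)$, it is the unique simple quotient of $V(\lambda)$ as well, so $V(\lambda)$ is indecomposable with head $L(\lambda)$, and $m^0_B(\lambda,\mu) = [V(\lambda):L(\mu)]$.

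The key step is then: any nonsplit extension $0\to L(\mu)\to E\to L(\lambda)\to 0$ in $\mathcal F$ produces a copy of $L(\mu)$ inside a quotient of $V(\lambda)$. Indeed, $E$ is a finite-dimensional module with simple quotient $L(\lambda)$ (the extension being nonsplit), hence a quotient of $M_{\g}(\lambda)$, hence — being finite-dimensional — a quotient of $V(\lambda)$ by the maximality in Proposition~\ref{induction functor properties}(3). Therefore $L(\mu)$, appearing as a composition factor of such an $E$, is a composition factor of $V(\lambda)$; but this only shows $m^0_B(\lambda,\mu)\geq 1$ whenever $\Ext^1\neq 0$, which is weaker than the claimed inequality. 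To get the full multiplicity bound, I would instead work with the \emph{universal} extension: let $d = \dim\Ext^1_{\g}(L(\lambda),L(\mu))$ and form the extension $0\to L(\mu)^{\oplus d}\to \widetilde E\to L(\lambda)\to 0$ classified by the identity in $\Ext^1(L(\lambda),L(\mu))\otimes\Ext^1(L(\lambda),L(\mu))^*$, i.e., the pushout giving $d$ linearly independent extension classes simultaneously. This $\widetilde E$ is again finite-dimensional with simple head $L(\lambda)$, hence a quotient of $V(\lambda)$, and $\widetilde E$ contains $L(\mu)^{\oplus d}$ in its socle, so $[\widetilde E : L(\mu)] \geq d$; pulling back along the surjection $V(\lambda)\twoheadrightarrow \widetilde E$ gives $m^0_B(\lambda,\mu) = [V(\lambda):L(\mu)]\geq [\widetilde E:L(\mu)]\geq d$, as desired.

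The main obstacle is verifying that the universal extension $\widetilde E$ really does have $L(\lambda)$ as its unique simple quotient (so that the maximality property of $\Gamma_0$ applies) and that it genuinely contains $d$ copies of $L(\mu)$ rather than fewer. For the first point: the radical of $\widetilde E$ is exactly $L(\mu)^{\oplus d}$ — any submodule not contained in it would map onto $L(\lambda)$ and, intersected with the socle, would split off a summand of the extension, contradicting linear independence of the chosen classes; alternatively one notes $\widetilde E / L(\mu)^{\oplus d} = L(\lambda)$ is simple and any proper submodule meets the kernel $L(\mu)^{\oplus d}$ in a proper submodule on which the extension would split. For the second point, one uses that the extension classes are linearly independent precisely so that the induced map $\Hom_{\g}(L(\mu)^{\oplus d}, L(\mu)) \to \Ext^1_{\g}(L(\lambda), L(\mu))$ is an isomorphism, which forces the socle-filtration layer $L(\mu)^{\oplus d}$ to survive as a genuine submodule. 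Here I use that $L(\mu)$ may be replaced by $\Pi L(\mu)$ with no change, so that the standing convention in $\mathcal F$ of parity-preserving morphisms causes no loss — the inequality is about composition-factor multiplicities counted with $\Pi$-orbits, consistent with the definition of $m^0_B(\lambda,\mu)$. Everything else is a routine application of exactness of $\Gamma_i$ (Proposition~\ref{induction functor properties}(1)) and the identification $V(\lambda) = \Gamma_0(G/B, L_{\mathfrak{b}}(\lambda))$.
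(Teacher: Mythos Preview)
Your argument is correct and follows the same route as the paper: show that any nonsplit extension of $L(\lambda)$ by $L(\mu)$ is a finite-dimensional quotient of the Verma module $M(\lambda)$ and hence, by Proposition~\ref{induction functor properties}(3), a quotient of $\Gamma_0(G/B,v(\lambda))$. Your passage to the universal extension $\widetilde E$ is in fact a cleaner way of extracting the multiplicity bound than the paper's phrase ``each such isomorphism class of extension gives rise to a distinct subquotient $L(\mu)$,'' and it is the right move.

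One point deserves tightening. You write ``$E$ has simple quotient $L(\lambda)$, hence a quotient of $M_\g(\lambda)$,'' and likewise for $\widetilde E$. That implication is false in general (e.g., $P(\lambda)$ has simple head $L(\lambda)$ but is usually not a quotient of $M(\lambda)$). What makes it true here is precisely the hypothesis $\lambda>\mu$: it forces the $\lambda$-weight space of $\widetilde E$ to coincide with $v(\lambda)$, to be annihilated by $\mathfrak n^+$, and to generate $\widetilde E$ (since the submodule it generates surjects onto the simple head). The paper states exactly this. You should make that dependence on $\lambda>\mu$ explicit rather than leaving it implicit in ``simple head''. By contrast, your worries in the final paragraph are largely unnecessary: that $[\widetilde E:L(\mu)]=d$ is immediate from the short exact sequence defining $\widetilde E$, the remark about $\Pi$ is irrelevant (the multiplicity $m^0_B(\lambda,\mu)$ is not taken modulo $\Pi$), and exactness of the $\Gamma_i$ is not used.
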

\begin{proof}
Suppose $0\rightarrow L(\mu)\rightarrow V\rightarrow L(\lambda)\rightarrow 0$ is an extension. Then $V$ contains a highest weight vector $v_{\lambda}$ of weight $\lambda$ coming from the inverse image of that of $L(\lambda)$. Since~$V$ is indecomposable, $V$~is generated by $v_{\lambda}$ and since $\mu<\lambda$, $V=U(\g).v_{\lambda}$ is annihilated by $\mathfrak{n}^+$. Thus~$V$ is a highest weight module of weight $\lambda$, so it is a finite-dimensional quotient of $M(\lambda)$ and consequently by Proposition~\ref{induction functor properties}(3), it is a quotient of $\Gamma_0(G/B, L_{\mathfrak{b}}(\lambda))$. Each such isomorphism class of extension $V$ thus gives rise to a distinct subquotient $L(\mu)$ in $
\Gamma_0(G/B,L_{\mathfrak{b}}(\lambda)).$ Consequently, $\dim\Ext_{\g}^1(L(\lambda),L(\mu))\leq [\Gamma_0(G/B,L_{\mathfrak{b}}(\lambda)):L(\mu)]=m^0_B(\lambda,\mu)$.
\end{proof}

\begin{Remark} In \cite{PS1}, the authors work in $\g^{\Pi}$-mod consisting of $\Pi$-invariant $\g$-modules (and even morphisms) and define $m_{P^{\Pi}}^i(\lambda,\mu)$ accordingly. For $\g=\q(n)$, the simple $\g^{\Pi}$-modules are~$L(\lambda)$ when $|\{i\colon \lambda_i\neq0\}|$ is odd and $L(\lambda)\oplus \Pi L(\lambda)$ when $|\{i\colon \lambda_i\neq0\}|$ is even.
\end{Remark}

\begin{prop}\label{m_P=m_B} Let $P$ be the parabolic subgroup of $Q(3)$ defined by roots $\{\ep_1-\ep_2,\ep_1-\ep_3,\allowbreak \ep_2-\ep_3,\ep_3-\ep_2\}$. Suppose $\lambda\in\Lambda^+\setminus\{(t,a,-a)\}$. Then for all $\mu\in\Lambda^+$, \[ m_{P}^i(\lambda,\mu)=m_B^i(\lambda,\mu).\]
\end{prop}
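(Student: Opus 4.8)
The plan is to compare the two geometric induction functors $\Gamma_i(G/B,-)$ and $\Gamma_i(G/P,-)$ by factoring the former through the latter, exploiting the fibration $P/B \hookrightarrow G/B \twoheadrightarrow G/P$. The key point is that $P$ has Levi factor isomorphic to $Q(1)\times Q(2)$, and the parabolic $P$ is obtained from $B$ by adjoining precisely the roots $\{\ep_2-\ep_3,\ep_3-\ep_2\}$ (together with the $Q(1)$-block root data), so the fiber $P/B$ is the flag supervariety for the $Q(2)$-factor. Concretely, write $\p = \mathfrak{l}\oplus\mathfrak{u}$ where $\mathfrak{l}\cong\q(1)\times\q(2)$ is the Levi and $\mathfrak{u}$ the nilradical, and let $\mathfrak{b}_{\mathfrak l}=\mathfrak{b}\cap\mathfrak{l}$ be the induced Borel of $\mathfrak{l}$. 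For the relevant weights, the first step is to compute $\Gamma_i(P/B_{\mathfrak l}, v(\lambda))$ inside $\mathfrak l$-mod, equivalently the cohomology $H^i(L/B_{\mathfrak l},\mathcal{L})$ for the $Q(2)$-factor.

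The crucial input is the hypothesis $\lambda\notin\{(t,a,-a)\}$: this says exactly that the restriction of $\lambda$ to the $\q(2)$-Levi factor, namely $(\lambda_2,\lambda_3)$, is \emph{typical} for $\q(2)$ (i.e. $\lambda_2+\lambda_3\neq 0$), \emph{unless} the weight is of the excluded form. (One should also handle the boundary case where $\lambda_2$ or $\lambda_3$ is zero separately, but there $(\lambda_2,\lambda_3)$ is still $\q(2)$-typical in the sense of the definition preceding Proposition \ref{typicallemma}, since the Levi block condition is automatically met.) Thus the typical lemma, Proposition \ref{typicallemma}, applied to the $\q(2)$-Levi factor (or rather the parabolic $B_{\mathfrak l}\subset\mathfrak l$) gives
\[ \Gamma_i(L/B_{\mathfrak l}, L_{\mathfrak b_{\mathfrak l}}(\lambda)) = \begin{cases} L_{\mathfrak l}(\lambda) & i=0,\\ 0 & i>0.\end{cases}\]
Extending by zero on $\mathfrak u$, this identifies the geometric induction from $\mathfrak b$ to $\mathfrak p$ at the level of the simple modules: the $P$-module $L_{\p}(\lambda)$ is precisely $\Gamma_0$ of $L_{\mathfrak b}(\lambda)$ along the fiber, with all higher $\Gamma_i$ vanishing.

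The second step is the composition-of-functors (Grothendieck/Leray) spectral sequence for the fibration $G/B\to G/P$ with fiber $P/B$, which reads
\[ H^p\big(G/P, \underline{H}^q(P/B,\mathcal V)\big) \Longrightarrow H^{p+q}(G/B,\mathcal V),\]
for $\mathcal V$ the bundle associated to $L_{\mathfrak b}(\lambda)^*$. By the fiberwise vanishing just established, $\underline{H}^q(P/B,\mathcal V)=0$ for $q>0$ and $\underline{H}^0(P/B,\mathcal V)$ is the bundle on $G/P$ associated to $L_{\p}(\lambda)^*$. Hence the spectral sequence collapses at the $E_2$-page and yields $H^p(G/B,\mathcal V)\cong H^p(G/P, \mathcal{L_{\p}(\lambda)}^*)$ for all $p$, i.e. $\Gamma_i(G/B, L_{\mathfrak b}(\lambda))\cong\Gamma_i(G/P, L_{\p}(\lambda))$ as $\g$-modules. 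Taking composition multiplicities of $L_{\g}(\mu)$ on both sides gives $m_B^i(\lambda,\mu)=m_P^i(\lambda,\mu)$, as claimed. (Alternatively, if one prefers to avoid the geometry entirely, one can run the same argument purely algebraically with the Zuckerman-functor definition: $H^0(G/B,-) = H^0(G/P,-)\circ H^0(P/B,-)$ as a composition of left-exact functors with $H^0(P/B,-)$ carrying injectives of $P$-mod to $H^0(G/P,-)$-acyclics, and the associated Grothendieck spectral sequence degenerates by the same typicality vanishing.)

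The main obstacle I anticipate is the first step: verifying that the fiberwise geometric induction $\Gamma_i(P/B, L_{\mathfrak b}(\lambda))$ behaves as a pure $\q(2)$ (strongly) typical induction, including the bookkeeping for the Clifford module $v(\lambda)$ — one must check that $v(\lambda)$ restricted to $\h$ decomposes compatibly with the $\q(1)\times\q(2)$ Levi so that $L_{\p}(\lambda)\cong L_{\q(1)}(\lambda_1)\boxtimes L_{\q(2)}(\lambda_2,\lambda_3)$ as an $\mathfrak l$-module (up to the outer-tensor parity conventions), and that the typicality hypothesis is being invoked correctly in each of the cases $\lambda_2+\lambda_3\neq 0$ with all $\lambda_i\neq 0$, versus some $\lambda_i=0$. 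Once the fiber computation is pinned down, the spectral-sequence collapse and the passage to multiplicities are routine. A secondary, more technical point is justifying that the Leray spectral sequence for the super-fibration $G/B\to G/P$ is available in this setting; this is standard for generalized Grassmannians of supergroups (see the references \cite{Man,PS1} already cited), so I would cite it rather than reprove it.
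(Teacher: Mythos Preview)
Your proposal is correct and follows essentially the same route as the paper: use the Leray spectral sequence for the fibration $G/B\to G/P$ with fiber $P/B\cong Q(2)/(B\cap Q(2))$, observe that the hypothesis $\lambda\notin\{(t,a,-a)\}$ is precisely the $\mathfrak b$-typicality of $\lambda$ inside the Levi $\q(1)\times\q(2)$, and invoke the typical lemma (Proposition~\ref{typicallemma}) to collapse the spectral sequence. The paper's proof is a three-line version of exactly this argument, so your extra care with the Clifford-module bookkeeping and the Zuckerman-functor alternative is sound but not needed for the level of detail the paper adopts.
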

\begin{proof}
There is a canonical projection $G/B\rightarrow G/P$ with kernel $P/B=Q(2)/B\cap Q(2)$. By our assumption, the weights $\lambda$ is $B$-typical in $P$. Thus the Leray spectral sequence
\[ H^i(G/P, H^j(P/B, L_{\mathfrak{b}}(\lambda)))\Rightarrow H^{i+j}(G/B, L_{\mathfrak{b}}(\lambda))\]
collapses by the typical lemma.
\end{proof}

\subsection{Virtual BGG reciprocity}\label{section3.3}
We now formulate a ``virtual'' BGG reciprocity theorem for $\g=\mathfrak{sq}(n)$ or $\q(n)$ which will be used to compute composition factors of indecomposable projective covers, $P_{\g}(\lambda)$ of $L_{\g}(\lambda)$.
This result is a generalization of Theorem~1 in~\cite{GS2} in the case when Cartan subalgebra is not purely even. In this section we consider the quotient $\mathcal K^\Pi(\mathcal F)$ of the Grothendieck ring
$\mathcal K(\mathcal F)$ by the relation $[X]=[\Pi X]$. Then $\mathcal K^\Pi(\mathcal F)$ has a basis $\{[L(\lambda)]\,|\,\lambda\in\Lambda^+\}$ and $[X:L(\lambda)]_{\Pi}$ is the coefficient
$a_\lambda$ in the decomposition $[X]=\sum a_{\lambda}[L(\lambda)]$.

Denote by $\Lambda_0^+:=\{\lambda\in\h_0^*\,|\, \langle \lambda,\check{\beta}\rangle\in\mathbf{Z}_{>0},\;\forall\,\beta\in\Delta_{\overline{0}}^+\}$. For $\g=\q(n)$, $\Lambda_0^+$ consists of dominant integral weights for which at most one $\lambda_i$ is zero. For $M\in\mathcal{F}^{\Pi}$, define $\mathcal{R}:=\Z[e^{\mu}]_{\mu\in\Lambda}$ and the \text{character} of $M$
\[\text{\rm Ch}(M):=\sum_{\mu\in\Lambda}\dim(M_{\mu})e^{\mu}\in\mathcal{R},\]
where we put $\dim X:=\dim X_{\bar 0}+\dim X_{\bar 1}$.
Then $\text{\rm Ch}$ defines an injective homomorphism \mbox{$\mathcal K^\Pi(\mathcal F) \to \mathcal R$}.

For any $\lambda\in\Lambda$ we define an \textit{Euler characteristic} as
\[ \mathcal{E}(\lambda):= \sum_{\mu}\sum_{i=0}^{\dim(G/B)_{\bar{0}}}(-1)^i[\Gamma_i(G/B,v({\lambda})):L(\mu)]_{\Pi}[L(\mu)],\]
where $\Gamma_i$ is the dual to geometric induction functor as defined in Section~\ref{section2.2}.
It is straightforward to check (see, e.g., \cite[Theorem~4.25]{B1}) that for $\lambda\in\Lambda$ such that $\text{\rm wt}(\lambda)=\gamma$, then $[\mathcal{E}(\lambda)]\in\mathcal{K}^{\Pi}(\mathcal{F}_{\gamma})$.

Let us comment on the relation between this Euler characteristic the one defined in~\cite{B2}. There, the author considered an induction from the maximal parabolic $P_{\lambda}$ to which $v({\lambda})$ extends, i.e.,
\[\mathcal{E}_{P}(\lambda):= \sum_{\mu}\sum_{i=0}^{\dim(G/P_{\lambda})_{\bar{0}}}(-1)^i[\Gamma_i(G/P_{\lambda},v({\lambda})):L(\mu)][L(\mu)].\]
If $\lambda\in\Lambda^+$ is regular then $P=B$ and $\mathcal{E}_{P}(\lambda)=\mathcal{E}(\lambda)$ and if $\lambda$ is not regular $\mathcal{E}(\lambda)=0$ while $\mathcal{E}_{P}(\lambda)\neq 0$. It was shown in~\cite{B2}
that $\mathcal{E}_{P}(\lambda)$ form a basis of the Grothendieck group of~$\mathcal F$.

 The following result is a straightforward generalization of \cite[Lemma 1.2]{GS}.
\begin{Lemma}\label{Euler Characteristic} The Euler characteristic $\mathcal{E}(\lambda)$ satisfies
\begin{enumerate}\itemsep=0pt
\item[$1.$]
\begin{equation*}
\text{\rm Ch}(\mathcal{E}(\lambda))=\dim v({\lambda})D\sum_{w\in S_n}\ep(w)e^{w.\lambda},
\end{equation*}
where
\[ D=\prod_{\alpha\in\Phi^+}\frac{e^{\alpha/2}+e^{-\alpha/2}}{e^{\alpha/2}-e^{-\alpha/2}}.\]
\item[$2.$] For all $w\in W$, \[ \mathcal{E}(\lambda) =\ep(w)\mathcal{E}(w.\lambda).\]
\item[$3.$] Let $\Lambda^+_0$ denote the set of regular dominant weights with respect to $\g_{\bar 0}$. The set \[\{\text{\rm Ch}(\mathcal{E}(\lambda)), \lambda\in\Lambda_0^+\}\] is linearly independent in the ring~$\mathcal{R}$.
\end{enumerate}
\end{Lemma}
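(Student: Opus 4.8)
The plan is to follow the classical Lie-algebra Weyl-character-formula strategy, adapted to the queer setting, and to reduce everything to a single representation-theoretic computation on the flag supergroup together with a linear-independence argument for the Weyl denominators.

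For part (1), the idea is to compute the character of the Euler characteristic $\mathcal E(\lambda)$ by a Leray/localization argument on $G/B$. Since $\mathcal E(\lambda)$ is, by definition, the alternating sum of the geometric induction functors $\Gamma_i(G/B, v(\lambda))$, its character is computed by passing first through the underlying even flag manifold $(G/B)_{\bar 0} = \mathrm{GL}(n)/B_{\bar 0}$. One uses the factorization of $G/B$ over $(G/B)_{\bar 0}$ with ``fiber direction'' given by the odd part of $\mathfrak g/\mathfrak b$, i.e.\ the odd nilradical $\mathfrak n_{\bar 1}^-$; its contribution to the Euler characteristic is the super-character of the Koszul/exterior complex $\Lambda^\bullet(\mathfrak n_{\bar 1}^-)$, which equals $\prod_{\alpha\in\Phi^+}(1+e^{-\alpha})$ up to a shift. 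One then applies the ordinary Atiyah--Bott/Weyl character formula on $(G/B)_{\bar 0}$ for the even-induced bundle associated to $v(\lambda)$, which produces the usual factor $\sum_{w\in S_n}\ep(w)e^{w.\lambda}$ divided by the even Weyl denominator $\prod_{\alpha\in\Phi^+}(e^{\alpha/2}-e^{-\alpha/2})$, times $\dim v(\lambda)$ (the Clifford module $v(\lambda)$ contributes only its dimension since it is a trivial $B_{\bar 0}$-module after the identification $H_i\mapsto\lambda_i$). Multiplying the two contributions and collecting the odd factor $\prod(1+e^{-\alpha}) = \prod(e^{\alpha/2}+e^{-\alpha/2})e^{-\alpha/2}$ against the even denominator yields exactly $\dim v(\lambda)\, D\sum_{w}\ep(w)e^{w.\lambda}$ with $D$ as stated, once the $\rho$-shifts are bookkept correctly. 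I would record this computation as the main step and cite \cite{PS1, GS} for the precise form of the localization/Leray argument already available there.

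Part (2) is then essentially formal from part (1): the right-hand side $\dim v(\lambda)\,D\sum_{w\in S_n}\ep(w)e^{w.\lambda}$ is manifestly $W$-skew-invariant in the ``dotted'' action, i.e.\ replacing $\lambda$ by $w_0.\lambda$ multiplies the sum by $\ep(w_0)$ while leaving $D$ and $\dim v(\lambda)$ unchanged (note $\dim v(\lambda)$ depends only on the number of nonzero coordinates of $\lambda$, hence on the $W$-orbit). Since $\mathrm{Ch}$ is injective on $\mathcal K^\Pi(\mathcal F)$ by the statement recalled just before the lemma, the identity of characters forces $\mathcal E(\lambda)=\ep(w)\mathcal E(w.\lambda)$ at the level of the Grothendieck ring. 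One small point to check here is that $\mathcal E(w.\lambda)$ indeed lands in $\mathcal K^\Pi(\mathcal F_\gamma)$ for the same block $\gamma=\mathrm{wt}(\lambda)$, which follows from $\mathrm{wt}(w.\lambda)=\mathrm{wt}(\lambda)$ together with the cited fact $[\mathcal E(\lambda)]\in\mathcal K^\Pi(\mathcal F_\gamma)$.

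For part (3), I would argue by linear independence of the ``numerators'' $\sum_{w\in S_n}\ep(w)e^{w.\lambda}$. By part (1), $\mathrm{Ch}(\mathcal E(\lambda))$ equals a fixed nonzero element $\dim v(\lambda)\,D$ times this alternating sum, and $D$ is a common invertible factor (in a suitable completion of $\mathcal R$) independent of $\lambda$; so it suffices to show that $\{\sum_{w}\ep(w)e^{w.\lambda}: \lambda\in\Lambda_0^+\}$ is linearly independent. For $\lambda$ regular dominant the stabilizer in $W$ is trivial and the top-weight term $e^{w_0\text{-maximal}}$, or more simply the unique dominant exponent $e^\lambda$, appears with coefficient $\pm 1$ and does not appear in $\sum_w\ep(w)e^{w.\mu}$ for any other regular dominant $\mu\neq\lambda$; since distinct $\lambda\in\Lambda_0^+$ have distinct dominant representatives, a standard triangularity argument with respect to the dominance order gives linear independence. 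One subtlety is that $\Lambda_0^+$ as defined (``at most one $\lambda_i$ zero'') is slightly larger than the strictly regular locus for $\mathfrak g_{\bar 0}$; but the sentence ``Let $\Lambda_0^+$ denote the set of regular dominant weights'' in item (3) restricts attention exactly to the regular ones, and for those the argument is clean.

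The main obstacle I anticipate is part (1): getting the $\rho$-shifts and the sign/parity bookkeeping exactly right when passing from $G/B$ to $(G/B)_{\bar 0}$, in particular matching the exterior-algebra contribution $\prod_{\alpha\in\Phi^+}(e^{\alpha/2}+e^{-\alpha/2})$ of the odd directions against the even Weyl denominator to produce the stated $D$, and confirming that the Clifford module $v(\lambda)$ contributes only the scalar $\dim v(\lambda)$ (which uses that on $\mathfrak b_{\bar 0}$ it is just $|v(\lambda)|$ copies of the character $e^\lambda$). Everything else is either a formal consequence of the character formula (part 2) or a standard triangularity argument (part 3), and both can be imported almost verbatim from \cite[Lemma~1.2]{GS}.
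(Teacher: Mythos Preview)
Your proposal is correct and is essentially what a proof would look like; note that the paper itself does not give a proof at all, but simply says the lemma ``is a straightforward generalization of \cite[Lemma~1.2]{GS}.'' So you have in fact supplied more detail than the authors, and your localization-through-$(G/B)_{\bar 0}$ argument is exactly the standard route used in \cite{GS,PS1}.

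One clarification worth flagging: in this paper $w.\lambda$ denotes the \emph{ordinary} action $w\lambda$, not the $\rho_0$-shifted one. You can read this off from the later computations: e.g.\ in Section~\ref{section5.3.1} the authors assert $\mathcal E(1,0,0)=0$, which follows from part~(1) precisely because $(1,0,0)$ has nontrivial $S_3$-stabilizer under the ordinary action (whereas $(1,0,0)+\rho_0=(2,0,-1)$ is regular, so the $\rho$-shifted alternating sum would not vanish). This matters for your part~(2) argument: your claim that ``$\dim v(\lambda)$ depends only on the number of nonzero coordinates, hence on the $W$-orbit'' is true for the ordinary action but \emph{false} for the $\rho$-shifted one (e.g.\ for $n=3$, $\lambda=0$ has no nonzero coordinates while $(12).\lambda=(-1,1,0)$ has two). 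With the correct interpretation your argument for~(2) goes through cleanly. Correspondingly, in part~(1) the $\rho_0$-shift coming from the even Weyl/Borel--Weil--Bott numerator $\sum_w\ep(w)e^{w(\lambda+\rho_0)}$ is exactly cancelled by the $e^{-\rho_0}$ hidden in the odd contribution $\prod_{\alpha>0}(1+e^{-\alpha})=e^{-\rho_0}\prod_{\alpha>0}(e^{\alpha/2}+e^{-\alpha/2})$, leaving $D\sum_w\ep(w)e^{w\lambda}$; this is the ``bookkeeping'' you anticipated, and it is the only place any care is needed.
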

We call a simple $\g$-module $L(\lambda)$ of \textit{type M} if $\Pi L(\lambda)$ is not isomorphic to $L(\lambda)$ and of \textit{type Q} if $\Pi L(\lambda)\cong L(\lambda)$.
Note that the type of $L(\lambda)$ is the same as the type of~$v(\lambda)$. Furthermore, for $\g=\q(n)$ the type depends on the number of non-zero entries in $\lambda$: the type is~$M$, if this number is even, and~$Q$ if it is odd.
For example, $L(1,0,0)$ is of type Q and $L(0)$ is of type M. We set
\[ t(\nu)=\begin{cases}1& \text{if } L(\nu) \ \text{type } M,\\ 0 & \text{if } L(\nu) \ \text{type } Q.\end{cases}
\]

\begin{thm}\label{BGG Reciprocity} Let $\g=\q(n)$ or $\mathfrak{sq}(n)$. Let $\mu\in\Lambda^+$ and $b_{\mu,\lambda}$ be the coefficients occurring in the expansion
\[\mathcal{E}(\mu) = \sum_{\lambda\in\Lambda^+} b_{\mu,\lambda}[L(\lambda)].\]
Then there exists coefficients $a_{\lambda,\mu}$ such that for $\lambda\in\Lambda^+$,
\[ [P(\lambda)] = \sum_{\mu\in\Lambda_0^+}a_{\lambda,\mu}\mathcal{E}(\mu)\]
and
\[ a_{\lambda,\mu} = 2^{t(\mu)-t(\lambda)}\gamma_{\mu}b_{\mu,\lambda},\]
where
\[
\gamma_{\mu}=\begin{cases}1& \text{if}\ \g=\q(n)\ \text {and } \prod\mu_i\neq 0,\ \text{or}\ \g=\mathfrak{sq}(n) \ \text{and}\ \sum\frac{1}{\mu_i}\neq 0, \\ 2& \text{otherwise}.\end{cases}
\]
\end{thm}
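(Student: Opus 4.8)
The plan is to follow the strategy of \cite[Theorem~1]{GS2}, but carefully tracking the Clifford-module dimensions $\dim v(\lambda)$ and the type function $t$, which are the new ingredients caused by the non-abelian Cartan. First I would recall that by Theorem~\ref{blocks-central characters} the Euler characteristics $\mathcal{E}(\mu)$, $\mu\in\Lambda_0^+$, lie in the appropriate blocks, and by Lemma~\ref{Euler Characteristic}(3) the set $\{\mathrm{Ch}(\mathcal{E}(\mu))\colon \mu\in\Lambda_0^+\}$ is linearly independent in $\mathcal R$; since $\mathrm{Ch}$ is injective on $\mathcal K^\Pi(\mathcal F)$, the classes $[\mathcal E(\mu)]$ are linearly independent there as well. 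This guarantees that the coefficients $a_{\lambda,\mu}$ in the expansion $[P(\lambda)]=\sum_{\mu\in\Lambda_0^+}a_{\lambda,\mu}\mathcal E(\mu)$, once shown to exist, are unique. For existence one argues as in \cite{GS2}: using Proposition~\ref{induction functor properties}(1)--(2) and the fact that geometric induction of a projective $\mathfrak b$-module (i.e.\ of $v(\mu)$ for $\mu$ making $v(\mu)$ extend to a parabolic) produces objects whose classes are integer combinations of $[\mathcal E(\cdot)]$, together with the fact that tensoring a projective with finite-dimensional modules stays projective, one shows every $[P(\lambda)]$ lies in the $\Z$-span of the $[\mathcal E(\mu)]$.

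The heart of the proof is the identification $a_{\lambda,\mu}=2^{t(\mu)-t(\lambda)}\gamma_\mu b_{\mu,\lambda}$. The idea is a Frobenius-reciprocity / adjunction computation: $\dim\Hom_{\g}(P(\lambda),\mathcal E(\mu))$ on one hand equals $a_{\lambda,\mu}$ up to a correction factor coming from whether $L(\lambda)$ is of type $M$ or $Q$ (a type-$Q$ simple has a one-dimensional-over-$\widetilde{\C}$ endomorphism ring, so $\Hom_{\g}(P(\lambda),L(\lambda))$ has dimension $2^{t(\lambda)}$ rather than $1$ after passing to $\mathcal K^\Pi$); on the other hand, using that $\Gamma_i$ is right-derived from a right adjoint to restriction, $\dim\Hom_{\g}(P(\lambda),\Gamma_0(G/B,v(\mu)))$ relates to $\dim\Hom_{\mathfrak b}(\mathrm{Res}\,P(\lambda),v(\mu))$, which in turn (after an Euler-characteristic argument killing higher $\Gamma_i$, legitimate because $P(\lambda)$ is projective hence $\mathrm{Res}$ acyclic for the relevant functors) counts the multiplicity $[P(\lambda):L(\mu)']$-type data weighted by $\dim v(\mu)$. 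The factor $\dim v(\mu)=2^{\lfloor (m-1)/2\rfloor}(1+\ep)$ from Lemma~\ref{dim Clifford}, compared with $\dim v(\lambda)$, is exactly what produces $2^{t(\mu)-t(\lambda)}$, while the extra factor $\gamma_\mu\in\{1,2\}$ accounts for the two-element fiber $P/B=Q(2)/(B\cap Q(2))$ situations — precisely the cases $\prod\mu_i=0$ (for $\q(n)$) or $\sum 1/\mu_i=0$ (for $\mathfrak{sq}(n)$) where $E_\mu$ or $E'_\mu$ drops dimension, cf.\ Lemma~\ref{dim Clifford}(a),(c) — where the standard object's self-Hom or the induction multiplicity doubles. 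Finally $b_{\mu,\lambda}=[\mathcal E(\mu):L(\lambda)]_\Pi$ enters by the symmetry $\dim\Hom_\g(P(\lambda),X)=[X:L(\lambda)]\cdot(\text{type factor})$ valid for any $X$ in the block, applied to $X=\mathcal E(\mu)$.

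The main obstacle I anticipate is bookkeeping the parity/type factors consistently: one must be scrupulous about the difference between $\mathcal F$ (parity-preserving morphisms, so $L(\lambda)$ and $\Pi L(\lambda)$ are distinct objects for type $M$) and the quotient $\mathcal K^\Pi(\mathcal F)$ in which $[L(\lambda)]=[\Pi L(\lambda)]$, and about the fact that for type-$Q$ modules $\End(L(\lambda))$ is a rank-one free module over $\widetilde\C$ (a copy of the Clifford algebra on one generator), so Homs into them acquire a factor of $2$. Getting these factors to assemble into exactly $2^{t(\mu)-t(\lambda)}\gamma_\mu$ — rather than some other combination — is the delicate point; everything else is a formal consequence of Proposition~\ref{induction functor properties}, Lemma~\ref{Euler Characteristic}, and the linear independence in Lemma~\ref{Euler Characteristic}(3). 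I would double-check the formula on the small examples $\q(1)$ and the typical blocks of $\q(3)$, where both sides can be computed directly from the quivers in Theorem~\ref{MainTheorem}.
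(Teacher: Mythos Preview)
Your overall strategy---follow \cite{GS2} and track the Clifford/type data---is correct, and your diagnosis that the bookkeeping of the factor $2^{t(\mu)-t(\lambda)}$ via the type~$M$/$Q$ distinction is the delicate point matches the paper. But two things in your plan are off.

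First, your explanation of $\gamma_\mu$ is wrong. It has nothing to do with a ``two-element fiber $P/B=Q(2)/(B\cap Q(2))$''. In the paper's argument the factor arises as the ratio $\dim\hat v(\mu)/\dim v(\mu)$, where $\hat v(\mu)$ is the indecomposable \emph{injective} $\h$-module with socle $v(\mu)$. This ratio is $1$ exactly when $v(\mu)$ is itself projective (equivalently injective) over $\h$, i.e.\ when the radical $K_\mu$ (resp.\ $K'_\mu$) of the Clifford form is zero; by Lemma~\ref{dim Clifford} this is precisely the condition $\prod\mu_i\neq 0$ for $\q(n)$, resp.\ $\sum 1/\mu_i\neq 0$ for $\mathfrak{sq}(n)$. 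So you have identified the right cases but attributed them to the wrong mechanism, and your plan as written would not produce the factor.

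Second, your sketch ``compute $\dim\Hom_\g(P(\lambda),\mathcal E(\mu))$ and match it to $b_{\mu,\lambda}$'' is too loose: $\mathcal E(\mu)$ is only a virtual object. The paper's route is more concrete. One starts from the Bott reciprocity $\dim\Hom_\g(P(\lambda),\Gamma_i(v(\mu)))=\dim H^i(\mathfrak b,\h_{\bar 0};v(\mu)^*\otimes P(\lambda))$, uses projectivity of $P(\lambda)$ over $\h$ to collapse the spectral sequence for the pair $(\mathfrak b,\h)$, and then passes to characters via the Koszul-type identity
\[
\sum_{i\geq 0}(-1)^i\,\mathrm{Ch}\bigl(C^i(\mathfrak n,P(\lambda))\bigr)=D^{-1}\,\mathrm{Ch}(P(\lambda)),
\]
where $D$ is the denominator of Lemma~\ref{Euler Characteristic}(1). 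Expanding the left side weight-by-weight using the formula $\dim M_\mu/\dim\hat v(\mu)=\dim\Hom_\h(v(\mu)\ \text{or}\ v(\mu)\oplus\Pi v(\mu),M)$ (this is where $\hat v(\mu)$ enters), and then symmetrising over $W=S_n$ using Lemma~\ref{Euler Characteristic}(1),(2), one reads off $\mathrm{Ch}(P(\lambda))$ as a combination of $\mathrm{Ch}(\mathcal E(\mu))$ with the stated coefficients. This character computation with $D^{-1}$ is the step your plan is missing; without it you will not be able to pin down the constants.
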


\begin{proof}
We follow the proof of \cite[Theorem~1]{GS2}. First, we have the Bott reciprocity formula
\begin{equation}\label{Bott}
 \dim\Hom_{\g}(P(\lambda),\Gamma_i(V))=\dim\Ext^i_B(V,P(\lambda))=\dim H^i(\mathfrak b,\h_{\bar 0};V^*\otimes P(\lambda)).
 \end{equation}
Let $C^i(\mathfrak n,-)$ stand for the $i$-th term of the cochain complex computing $H^{\bullet}(\mathfrak n,-)$. Note that $P(\lambda)$ and hence $C^i(\mathfrak n;V^*\otimes P(\lambda))$ is projective and injective in the category of
$\h$-modules semisimple over $\h_{\bar 0}$.
Hence $H^j(\h,\h_{\bar 0};C^i(\mathfrak n,V^*\otimes P(\lambda)))=0$ for any $i$ and $j\geq 1$. Therefore the first term of the spectral sequence for the pair
$(\mathfrak b,\h)$
implies that
\begin{equation}\label{spectral}
 \sum_{i=0}^\infty(-1)^i\dim\Ext^i_B(v(\mu),P(\lambda))=\sum_{i=0}^\infty(-1)^i\dim\Hom_{\h}(v(\mu),C^i(\mathfrak n,P(\lambda))).
 \end{equation}
 Furthermore, we have
 \begin{equation}\label{multiplicity}
 [M:L(\lambda)]_{\Pi}=\begin{cases} \dim\Hom_{\g}(P(\lambda)\oplus\Pi P(\lambda), M) &\text{if } L(\lambda)\text{ type } M,\\ \dim\Hom_{\g}(P(\lambda), M)&\text{if } L(\lambda) \text{ type } Q.	\end{cases}
 \end{equation}
Define $b^i_{\mu,\lambda}$ by
\[ b^i_{\mu,\lambda}:=\begin{cases}\dim\Hom_{\h}(v(\mu)\oplus\Pi v(\mu), C^i(\mathfrak n,P(\lambda))) &\text{if } L(\lambda) \text{ type } M,\\ \dim\Hom_{\h}(v(\mu), C^i(\mathfrak n,P(\lambda)))&\text{if } L(\lambda)\text{ type } Q. \end{cases}
\]
By application of (\ref{spectral}) and (\ref{multiplicity}) we obtain
\[ b_{\mu,\lambda}=\sum_{i=0}^\infty(-1)^ib^i_{\mu,\lambda}.\]
For any module $M\in\mathcal F$ projective over $\h$ we have the equality
\begin{equation}\label{multchar}
 \frac{\dim M_\mu}{\dim\hat{v}(\mu)}=\begin{cases} \dim\Hom_{\h}(v(\mu)\oplus\Pi v(\mu),M)&\text{if } v(\mu)\text{ type } M,\\ \dim\Hom_{\h}(v(\mu),M)&\text{if } v(\mu)\text{ type } Q, \end{cases}
 \end{equation}
where $\hat{v}(\mu)$ is the corresponding indecomposable injective $\h$-module.
In other words we get
\[\text{\rm Ch}(M)=\sum_{\mu\ \text{type M}}\dim\Hom_{\h}(v(\mu)\oplus\Pi v(\mu),M)e^\mu+\sum_{\mu \ \text{type Q}}\dim\Hom_{\h}(v(\mu),M)e^\mu.\]
If $\lambda$ is of type Q we obtain
\begin{align*}
\text{\rm Ch}(C^i(\mathfrak n,P(\lambda)))& =\sum_{\mu\ \text{type M}}2b_{\mu,\lambda}^i\dim \hat{v}(\mu)e^\mu+\sum_{\mu \ \text{type Q}}b_{\mu,\lambda}^i\dim \hat{v}(\mu)e^\mu\\
& =\sum_{\mu}2^{t(\mu)-t(\lambda)}b_{\mu,\lambda}^i\dim \hat{v}(\mu)e^\mu.
\end{align*}
If $\lambda$ is of type M we obtain
\begin{align*}
\text{\rm Ch}(C^i(\mathfrak n,P(\lambda)))& =\sum_{\mu\ \text{type M}}b_{\mu,\lambda}^i\dim \hat{v}(\mu)e^\mu+\sum_{\mu \ \text{type Q}}\frac{1}{2}b_{\mu,\lambda}^i\dim \hat{v}(\mu)e^\mu\\
& =\sum_{\mu}2^{t(\mu)-t(\lambda)}b_{\mu,\lambda}^i\dim \hat{v}(\mu)e^\mu.
\end{align*}
Taking alternating sum over $i$ we get
\[ \sum_{i=1}^{\infty}(-1)^i\text{\rm Ch}(C^i(\mathfrak n,P(\lambda)))=\sum_{\mu}2^{t(\mu)-t(\lambda)}b_{\mu,\lambda}\dim \hat{v}(\mu)e^\mu.\]
On the other hand, we have
\[ \sum_{i=1}^{\infty}(-1)^i\text{\rm Ch}(C^i(\mathfrak n,P(\lambda)))=\text{\rm Ch}(P(\lambda))\prod_{\alpha\in\Phi^+}\frac{1-e^{-\alpha}}{1+e^{-\alpha}}=D^{-1}\text{\rm Ch}(P(\lambda)).\]
This implies
\[ \text{\rm Ch}(P(\lambda))=D\sum_{\mu\in\Lambda}b_{\mu,\lambda}\dim\hat{v}(\mu)2^{t(\mu)-t(\lambda)}e^\mu.\]
By $S_n$-invariance of $\text{\rm Ch}(P(\lambda))$, we get
\[ b_{\mu,\lambda}=\varepsilon(w)b_{w.\mu,\lambda}\qquad\forall\, w\in S_n.\]
This together with $\dim\hat{v}(\mu)=\dim\hat{v}(w.\mu)$ implies
\begin{align*}
\text{\rm Ch}(P(\lambda))& =D\sum_{w\in W}\sum_{\mu\in\Lambda^+_0}b_{\mu,\lambda}\varepsilon(w)\dim\hat{v}(\mu)2^{t(\mu)-t(\lambda)}e^{w.\mu}\\
& =\sum_{\mu\in\Lambda^+_0}\frac{\dim \hat{v}(\mu)}{\dim v(\mu)}2^{t(\mu)-t(\lambda)}b_{\mu,\lambda}\text{\rm Ch}(\mathcal E(\mu)).
\end{align*}
Therefore we obtain the relation
\begin{equation}\label{fundamental}
 a_{\lambda,\mu}=\frac{\dim \hat{v}(\mu)}{\dim v(\mu)}2^{t(\mu)-t(\lambda)}b_{\mu,\lambda}.
\end{equation}

Since $\mu\in\Lambda_0^+$ at most one $\mu_i=0$. Therefore, we get: for $\g=\q(n)$, $v(\mu)=\hat{v}(\mu)$ if all $\mu_i\neq 0$; for $\g=\mathfrak{sq}(n)$, $v(\mu)=\hat{v}(\mu)$ if $\sum_{i=1}^n\frac{1}{\mu_i}\neq 0$.
In remaining cases $\frac{\dim \hat{v}(\mu)}{\dim v(\mu)}=2$.
\end{proof}

\begin{Remark} Theorem \ref{BGG Reciprocity} holds for any Lie superalgebra $\g$ such that $\h=\h_{\bar0}$ and $\g_{\bar1}=\g_{\bar1}^*$. In this case, we get $\gamma_{\mu}=1$.
\end{Remark}

Let $\mathcal{K}^{\Pi}_P(\mathcal{F})$ be the subgroup of $\mathcal{K}^\Pi(\mathcal{F})$ generated by the classes of all projective modules. It is an ideal in $\mathcal{K}^{\Pi}(\mathcal{F})$ since tensor product of projective with any finite-dimensional module is projective. Let $\mathcal{K}^\Pi_E(\mathcal{F})$ be the subgroup of $\mathcal{K}^{\Pi}(\mathcal{F})$ generated by the Euler characteristics. Then $\mathcal{K}^{\Pi}_P(\mathcal{F})\subset\mathcal{K}^{\Pi}_E(\mathcal{F})\subset\mathcal{K}^{\Pi}(\mathcal{F})$ and the inclusions are in general strict. The $b_{\nu,\mu}$ express the basis of $\mathcal{K}^{\Pi}_E(\mathcal{F})$ in terms of
the basis of $\mathcal{K}^{\Pi}(\mathcal{F})$ and $a_{\lambda,\nu}$ express the basis of $\mathcal{K}^{\Pi}_P(\mathcal{F})$ in terms of the basis of $\mathcal{K}^{\Pi}_E(\mathcal{F})$. Thus for two $\g$-dominant weights $\lambda,\mu$, we have
\begin{equation}\label{Projective multiplicity}
[P(\lambda):L(\mu)]_\Pi=\sum_{\nu\in\Lambda_0^+}a_{\lambda,\nu}b_{\nu,\mu}.
\end{equation}
\begin{Remark} In \cite{B1} the coefficients $b_{\mu,\lambda}$ and the multiplicities $[P(\lambda):L(\mu)]$ were computed using the action of the Kac--Moody superalgebra $B_{\infty}$ on $\mathcal F$ via translation functors.
 Since $[P(\lambda)]$ and $[L(\lambda)]$ form a dual system in $\mathcal{K}^{\Pi}(\mathcal{F})$ ($\dim\Hom_{\mathfrak q}(P(\lambda),L(\mu))=\delta_{\lambda,\mu}$) the action of translation functors on $[P(\lambda)]$ is related to the action on $[L(\lambda)]$ in the natural way via this duality.
 Applying translation functors repeatedly starting from a typical representation, the author obtains a nice combinatorial formula
 for $b_{\mu,\lambda}$. In addition, it gives another way to prove Theorem~\ref{BGG Reciprocity} in this particular case.
\end{Remark}

\subsection{General lemma}\label{generalind} To study relations between block for $\mathfrak{sq}(n)$ and $\q(n)$ we consider the induction and restriction functors
\begin{gather*}
\Ind\colon \ \mathcal F_{\mathfrak{sq}(n)}\to \mathcal F_{\mathfrak{q}(n)},\qquad M\mapsto \Ind^{\q(n)}_{\mathfrak{sq}(n)}M,\\
\Res\colon \ \mathcal F_{\mathfrak{q}(n)}\to \mathcal F_{\mathfrak{sq}(n)},\qquad M\mapsto \Res_{\mathfrak{sq}(n)}M.
\end{gather*}
The Frobenius reciprocity implies that $\Ind$ is left adjoint of $\Res$.

\begin{Lemma}\label{general} Let $M$ be a projective $\mathfrak{sq}(n)$-module with $\Pi M\cong M$ and let $\mathcal A=\End_{\mathfrak{sq}}(M)$, $\mathcal A'=\End_{\mathfrak{q}}(\Ind M)$. Assume that there exists $\theta\in\mathcal A'$
 such that $\Ker\theta=\operatorname{Im}\theta$ and $\Ker\theta\cap (1\otimes M)=\{0\}$. Then $\mathcal A'\cong \mathcal A\otimes\C[\theta]/\big(\theta^2\big)$.
 \end{Lemma}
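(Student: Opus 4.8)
The plan is to combine Frobenius reciprocity with the PBW description of $\Ind$ along the codimension-$(0|1)$ ideal $\sq(n)\subset\q(n)$. Fix an odd $z\in\q(n)$ of nonzero odd trace; then $z^2=\frac{1}{2}[z,z]\in[\q(n),\q(n)]\subset\sq(n)$, so by PBW $U(\q(n))=U(\sq(n))\oplus U(\sq(n))z$ and hence $\Ind M=(1\otimes M)\oplus(z\otimes M)$ as a vector space, in which $1\otimes M$ is an $\sq(n)$-submodule isomorphic to $M$ and $\Ind M/(1\otimes M)\cong\Pi M$. Since $\Pi M\cong M$ and $M$ is projective, $\Hom_{\sq}(M,-)$ is exact on the resulting short exact sequence, so Frobenius reciprocity gives $\dim_{\C}\mathcal A'=\dim\Hom_{\sq}(M,\Res\Ind M)=2\dim_{\C}\mathcal A$. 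Functoriality of $\Ind$ gives an algebra homomorphism $\mathcal A\to\mathcal A'$, $a\mapsto 1\otimes a$, which is injective because $m\mapsto 1\otimes m$ embeds $M$ into $\Ind M$.

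I would then extract the structure of $\theta$ from the hypotheses: $\Ker\theta=\operatorname{Im}\theta$ forces $\theta^2=0$ and, by rank--nullity, $\dim\Ker\theta=\frac{1}{2}\dim\Ind M=\dim M$, so the hypothesis $\Ker\theta\cap(1\otimes M)=0$ yields a vector-space (indeed $\sq(n)$-module) direct sum decomposition $\Ind M=(1\otimes M)\oplus\Ker\theta$; moreover $\kappa\colon m\mapsto\theta(1\otimes m)$ is an isomorphism of $\sq(n)$-modules $M\xrightarrow{\sim}\Ker\theta$ ($\sq(n)$-linear since $\theta$ is, injective by the intersection hypothesis, surjective by dimensions). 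In particular both composition factors of $\Res\Ind M$ are isomorphic to $M$, consistent with the dimension count above.

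Finally I want to show $\mathcal A'=(1\otimes\mathcal A)\oplus(1\otimes\mathcal A)\theta$ with $\theta$ central; this gives the isomorphism $\mathcal A\otimes\C[\theta]/(\theta^2)\xrightarrow{\sim}\mathcal A'$ sending $a\otimes 1\mapsto 1\otimes a$ and $a\otimes\theta\mapsto(1\otimes a)\theta$ (well defined and multiplicative because $\theta^2=0$ and $\theta$ is central, injective by directness of the sum, surjective by the dimension count). Directness is immediate: restricting a relation $1\otimes a=(1\otimes b)\theta$ to $1\otimes M$ puts the left side in $1\otimes M$ and the right side in $\operatorname{Im}\theta=\Ker\theta$, forcing $a=b=0$. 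The one place requiring genuine work, and the main obstacle, is the centrality of $\theta$ in $\mathcal A'$ --- equivalently, that every $\varphi\in\mathcal A'$ maps $\Ker\theta$ into itself and acts there, through $\kappa$, by the same operator as on $1\otimes M$. I would establish this by writing an arbitrary $\varphi\in\mathcal A'$ in block form with respect to $\Ind M=(1\otimes M)\oplus(z\otimes M)$, using the explicit $\q(n)$-action --- the mixing of the two summands is governed by $z^2\in\sq(n)$ and by $[\sq(n),z]\subset\sq(n)$ --- to solve for the constraints on the blocks, and then checking $\theta\circ(1\otimes a)=(1\otimes a)\circ\theta$ on the generating subspace $1\otimes M$; this suffices since both sides are $\q(n)$-linear and $1\otimes M$ generates $\Ind M$ over $\q(n)$. (If in a given application $\theta$ arises as the value at $M$ of a natural transformation $\Ind\Rightarrow\Ind$, its centrality is automatic from naturality.) I expect the parity and sign bookkeeping in this block computation to be the fiddliest point.
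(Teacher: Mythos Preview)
Your outline is sound and you correctly isolate the crux: showing that $\theta$ commutes with the image of $\Ind\colon\mathcal A\to\mathcal A'$. However, your proposed route to this---writing $\varphi\in\mathcal A'$ in block form relative to the PBW splitting $\Ind M=(1\otimes M)\oplus(z\otimes M)$ and unwinding the explicit $\q(n)$-action---is more laborious than the paper's, precisely because $z\otimes M$ is not an $\sq(n)$-submodule, so the resulting blocks are only $\C$-linear and the constraints from $\sq$-linearity mix them via $[s,z]$-terms. The paper instead uses the $\sq(n)$-module decomposition $\Res\Ind M=(1\otimes M)\oplus\Ker\theta$ (both summands isomorphic to $M$), so that $\Res$ lands in $\mathrm{Mat}_2(\mathcal A)$ on the nose. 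In these coordinates $\Res\theta=\left(\begin{smallmatrix}0&0\\ \mathrm{Id}&0\end{smallmatrix}\right)$ and $\Res\Ind\gamma=\left(\begin{smallmatrix}\gamma&\gamma'\\0&\gamma\end{smallmatrix}\right)$ for some $\gamma'\in\mathcal A$ (upper-triangularity because $\Ind\gamma$ preserves $1\otimes M$; diagonal entries $\gamma$ from the induced action on the sub and the quotient). The key trick is Frobenius reciprocity in the form: any $\psi\in\mathcal A'$ whose $\Res$ has vanishing first column is zero, since $\psi$ is determined by $\psi|_{1\otimes M}$. Applying this to $\psi=[\Ind\gamma,\theta]-\Ind\gamma'$, whose $\Res$ one computes formally to be $\left(\begin{smallmatrix}0&-\gamma''\\0&-2\gamma'\end{smallmatrix}\right)$, forces $\psi=0$ and hence $\gamma'=0$, giving $[\Ind\gamma,\theta]=0$ without any sign or parity bookkeeping. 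One small point in your write-up: your ``directness'' argument that $(1\otimes b)\theta(1\otimes M)\subset\operatorname{Im}\theta$ already uses that $\Ind b$ commutes with $\theta$, so it should come \emph{after} centrality is established, not before.
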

 \begin{proof} Note that our assumptions imply $\Res \Ind M=M\oplus M$.
 Consider injective homomorphism $\Ind\colon \mathcal A\to\mathcal A'$ and $\Res\colon \mathcal A'\to \text{Mat}_2\otimes A$.
 Furthermore, for $\gamma\in \mathcal A$, we have
\[ \Res\Ind\gamma=\left(\begin{matrix}\gamma&\gamma'\\ 0&\gamma\end{matrix}\right)\]
 for some $\gamma'\in\mathcal A$.
 The condition $\theta^2=0$ implies
\[ \Res \theta=\left(\begin{matrix}0& 0\\ \text{\rm Id}&0\end{matrix}\right).\]

 The Frobenius reciprocity implies for any $\varphi\in\mathcal A'$,
 \[ \text{if}\quad\Res\varphi=\left(\begin{matrix}0& \sigma\\ 0&\tau\end{matrix}\right)\qquad\text{then}\qquad \Res\varphi=0.
 \]
 We have
\begin{gather*}
 [\Res\Ind\gamma,\Res\theta]=\left(\begin{matrix}\gamma'& 0\\ 0&-\gamma'\end{matrix}\right),\qquad
 [\Res\Ind\gamma,\Res\theta]-\Res\Ind\gamma'=\left(\begin{matrix}0& -\gamma''\\ 0&-2\gamma'\end{matrix}\right),
\end{gather*}
 hence $\gamma'=0$.

 Thus, we have proved that $\Ind(\mathcal A)$ commutes with~$\theta$. Thus there is an injective homomorphism $\mathcal A\otimes \C[\theta]/\big(\theta^2\big)\to \mathcal A$. The dimension argument implies that it is an isomorphism.
 \end{proof}

\section{Self extensions}\label{section4}

\subsection[Self extensions for q(n)]{Self extensions for $\boldsymbol{\q(n)}$}\label{section4.1}
The goal of this section is to prove the following theorem.
\begin{thm}\label{selfextensions}
Let $\lambda = (\lambda_1,\dots,\lambda_k,0,\dots,0, -\lambda_{k+m+1},\dots,-\lambda_n)\in\Lambda^+$ such that all $\lambda_i>0$ be a~dominant integral weight in $\q(n)$. Then
\[\Ext_{\q(n)}^1(L(\lambda),\Pi L(\lambda))=\begin{cases} \C&\text{if } m>0,\\
		0&\text{if } m=0.\end{cases}
\]
If $L(\lambda)\neq\Pi L(\lambda)$, then
\[\Ext_{\q(n)}^1(L(\lambda), L(\lambda))=0.\]
\end{thm}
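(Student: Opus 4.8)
The plan is to deduce both statements from the isomorphism $\Ext^i_\g(L,\Pi^\ep L)\cong\operatorname{H}^i(\g,\g_{\bar0};L^*\otimes\Pi^\ep L)$ of Section~\ref{section3.1} (where $\g=\q(n)$, $L=L_\g(\lambda)$), combined with a reduction to the Cartan. Since $\lambda$ is the highest weight of $L$ and the $\h$-module $v(\lambda)=M(\lambda)_\lambda$ is simple, any extension $V$ of $L$ by $\Pi^\ep L$ in $\mathcal F$ is generated by its $\lambda$-weight space $V_\lambda$; and if $V_\lambda$ splits over $\h$ then $V$ splits, because an $\h$-complement $W\subset V_\lambda$ to $L_\lambda$ is killed by $\mathfrak n^+$ and simple over $\h$, so $U(\g)W$ is a highest weight submodule with $\lambda$-weight space $W$, which cannot equal $V$ (as $\dim V_\lambda=2\dim v(\lambda)$ while $\dim M(\lambda)_\lambda=\dim v(\lambda)$), whence $U(\g)W\cap L=0$. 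Since modules in $\mathcal F$ are $\g_{\bar0}$-semisimple, $\h_{\bar0}$ acts on $V_\lambda$ by $\lambda$, so $V_\lambda$ is a module over the Clifford algebra $\mathcal C_\lambda$ of the (possibly degenerate) form $F_\lambda$ on all of $\h_{\bar1}$, and restricting to weight $\lambda$ gives an injection
\[\Ext^1_\g(L,\Pi^\ep L)\hookrightarrow\Ext^1_{\mathcal C_\lambda}(v(\lambda),\Pi^\ep v(\lambda)).\]

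The radical of $F_\lambda$ on $\h_{\bar1}$ is $K_\lambda=\operatorname{span}\{\bar H_i:\lambda_i=0\}$, so $\mathcal C_\lambda\cong\operatorname{Cliff}(\lambda)\otimes\mathcal Q$ with $\operatorname{Cliff}(\lambda)$ semisimple and $\mathcal Q$ the Grassmann algebra on the $m$ odd generators $\bar H_i$ with $\lambda_i=0$. A Künneth computation, using semisimplicity of $\operatorname{Cliff}(\lambda)$ and that the trivial $\mathcal Q$-module has no non-trivial self-extension while $\Ext^1_{\mathcal Q}(\C,\Pi\C)\cong K_\lambda^*$, yields $\Ext^1_{\mathcal C_\lambda}(v(\lambda),v(\lambda))=0$ when $L$ is of type $M$, and $\Ext^1_{\mathcal C_\lambda}(v(\lambda),\Pi v(\lambda))\cong K_\lambda^*$. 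This already gives $\Ext^1_\g(L,L)=0$ when $L\neq\Pi L$, and $\Ext^1_\g(L,\Pi L)=0$ when $m=0$ (then $K_\lambda=0$). It remains, for $m>0$, to show $\dim\Ext^1_\g(L,\Pi L)=1$.

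For the upper bound, the injection above is equivariant for the subgroup $S_m$ of $S_n$ permuting the $m$ vanishing coordinates of $\lambda$, which is realised by permutation matrices in $G_{\bar0}$; these act on $\g$ by inner automorphisms, hence trivially on $\Ext^1_\g(L,\Pi L)$, whereas $S_m$ permutes the obvious basis of $K_\lambda^*$. So the image lies in $(K_\lambda^*)^{S_m}$, which is one-dimensional, giving $\dim\Ext^1_\g(L,\Pi L)\le1$. For the lower bound, examine the map $-\otimes L\colon\Ext^1_\g(\C,\Pi\C)\to\Ext^1_\g(L,\Pi L)$. By Theorem~\ref{Ext C,C} the source equals $S^1(\g_{\bar0}^*)^{\g_{\bar0}}\cong\C$ (the only invariant linear form on $\g_{\bar0}=\gl(n)$ being the trace), spanned by a non-split extension $X_0$ of $\C$ by $\Pi\C$; it suffices to show $X_0\otimes L$ is non-split. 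Its defining cocycle is supported on $\xi:=\sum_i\bar H_i$ and vanishes on $\sq(n)_{\bar1}=[\g_{\bar0},\g_{\bar1}]$, so $\sq(n)=[\g,\g]$ acts trivially on $X_0$ and $\xi$ acts by a non-zero nilpotent; unwinding the tensor product, $X_0\otimes L$ splits if and only if $\operatorname{id}_L$ lies in the image of $\operatorname{ad}\xi$ acting on $\End_{\sq(n)}(L)$ (here $\xi^2=\sum_iH_i$ is central).

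The decisive point is that for $m>0$ the module $L$ is \emph{irreducible over $\sq(n)$}: by Lemma~\ref{dim Clifford}(a) we have $\dim E'_\lambda=\dim E_\lambda$, forcing $v(\lambda)$ to remain simple over $\h'$, and then $L=U(\sq(n))v(\lambda)$ is a highest weight $\sq(n)$-module which, by Clifford theory for $(\q(n),\sq(n))$, is a sum of $\xi$-translates of a single simple, all of highest weight $\lambda$, hence simple by simplicity of its $\lambda$-weight space. Therefore $\End_{\sq(n)}(L)$ is $\C$ or $\widetilde\C$, and since $\xi$ normalises $\sq(n)$ with $\operatorname{ad}(\xi)^2=\operatorname{ad}(\xi^2)=0$ on $\End_{\sq(n)}(L)$, the operator $\operatorname{ad}\xi$ vanishes identically there; so $\operatorname{id}_L$ is not in its image, $X_0\otimes L$ is non-split, and $\dim\Ext^1_\g(L,\Pi L)\ge1$. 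Combined with the upper bound this finishes the proof. The main obstacle is precisely this last paragraph — the construction of the self-extension and the verification that it survives $-\otimes L$ — which relies on the irreducibility of $L$ over $\sq(n)$ for $m>0$ and the vanishing of $\operatorname{ad}\xi$ on $\End_{\sq(n)}(L)$; the preceding reductions are routine once the parity signs in the Künneth formula for $\mathcal C_\lambda$ are handled with care.
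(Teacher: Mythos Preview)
Your reduction to the Cartan for the upper bound, landing in $K_\lambda^*\cong\C^m$ and then cutting down to one dimension via the $S_m$-symmetry, is a genuine alternative to the paper's approach. The paper instead passes to the parabolic with Levi $\mathfrak{l}\cong\q(m)\oplus\h'$ (Lemma~\ref{lemma1self}), so that the $m$ zero coordinates are bundled into a single $\q(m)$-factor and the bound $\Ext^1_{\q(m)}(\C,\Pi\C)=\C$ from Theorem~\ref{Ext C,C} gives the one-dimensionality directly. Your route works --- the key point being that a permutation $\tilde w\in S_m\subset G_{\bar0}$ acts on $v(\lambda)$ by a scalar (it commutes with each $\bar H_i$ on $v(\lambda)$, since either $w(i)=i$ or $\bar H_i|_{v(\lambda)}=0$), so the induced $\h$-isomorphism $V_\lambda\cong (V_\lambda)^{\sigma_w}$ identifies the two extension classes --- but this deserves an explicit sentence; as written it is asserted rather than verified.

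There is, however, a genuine gap in your lower-bound argument when $n-m$ is odd, i.e.\ when $L$ is of type~Q over $\sq(n)$ and $\End_{\sq(n)}(L)=\widetilde\C=\C\cdot\operatorname{id}\oplus\C\ep$ with $\ep$ odd. You claim that $(\operatorname{ad}\xi)^2=\operatorname{ad}(\xi^2)=0$ forces $\operatorname{ad}\xi=0$ on $\widetilde\C$; but $\operatorname{ad}\xi$ already annihilates $\operatorname{id}$, so $(\operatorname{ad}\xi)^2$ vanishes on $\C\ep$ automatically and gives no information about $\{\xi,\ep\}\in\C\cdot\operatorname{id}$. If $\{\xi,\ep\}$ were a nonzero scalar then $\operatorname{id}_L$ \emph{would} lie in the image of $\operatorname{ad}\xi$ and $X_0\otimes L$ would split. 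The desired vanishing $\{\xi,\ep\}=0$ is true, but for a different reason: since $\ep$ super-commutes with each $\bar H_i-\bar H_j\in\sq(n)$, restricting to $v(\lambda)$ and taking $j$ with $\lambda_j=0$ (so $\bar H_j|_{v(\lambda)}=0$) gives $\{\ep,\bar H_i\}|_{v(\lambda)}=0$ for every $i$, hence the scalar $\{\xi,\ep\}$ vanishes on $v(\lambda)$ and therefore everywhere. The paper avoids this detour entirely: having constructed the same extension (its module $U$ is your $X_0$), it simply observes that for any $p$ with $\lambda_p=0$ the operator $\bar H_p$ acts nontrivially on $(L(\lambda)\otimes U)_\lambda$ while it kills $(L(\lambda)\oplus\Pi L(\lambda))_\lambda$, so the two $\h$-modules are not isomorphic and the sequence cannot split.
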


Theorem \ref{selfextensions} implies parts (1) and (2) of our main theorem \ref{MainTheorem}. Namely, $\Ext^1(L(\lambda),L(\mu))\neq 0\Rightarrow \text{\rm wt}(\lambda)=\text{\rm wt}(\mu)$. Thus, by Theorem \ref{selfextensions}, there are no extensions in the strongly typical block, and there is a unique extension $\frac{L(\lambda)}{\Pi L(\lambda)}$ in the typical block. Thus in the typical block, the projective cover of $L(\lambda)$ is $P(\lambda) = \frac{L(\lambda)}{\Pi L(\lambda)}$ (Theorem \ref{BGG Reciprocity}). Then $a\in\Hom_{\q} (P(\lambda),\Pi P(\lambda))$ implies $a^2=0$.

\begin{proof}
The key idea is to take parabolic invariants to reduce the problem to finding extensions between trivial modules.
Let $\lambda$ be as in the theorem. Define the parabolic subalgebra of $\g$ by
\[\mathfrak{p}:=\h\oplus\bigoplus_{1<i<j\leq n}\g_{\ep_i-\ep_j}\oplus\bigoplus_{k<i<j\leq k+m}\g_{\ep_j-\ep_i}.\]
Its Levi subalgebra $\mathfrak{l}\subset\p$ is isomorphic to $\q(m)\oplus \h'$ where $\h'\subset\h$ is the centralizer of $\q(m)$ in $\h$. Let
\[n_{\p}:=\bigoplus_{i\leq k<j\leq n}\g_{\ep_i-\ep_j}\oplus\bigoplus_{k<i\leq k+m<j\leq n}\g_{\ep_i-\ep_j}\]
be the nilpotent radical of $\p$.

We first observe that taking $n_{\p}$ invariants is a functor from $\q(n)$-mod to $\mathfrak{l}$-mod. Next, suppose $L(\lambda)^{n_{\p}}$ had a nontrivial $\mathfrak{l}$-invariant subspace $N$. Because $\mathfrak{l}$ preserves the $\lambda$-weight space, and the lower parabolic nilpotent part only lowers the $\lambda$-weight space, we must have $U(\q(n))N_{\lambda}\subsetneq L(\lambda)_{\lambda} \Rightarrow U(\q(n))N \subsetneq L(\lambda)\Rightarrow N=0$, contradiction. Thus $L(\lambda)^{n_{\p}}$ is simple $\mathfrak{l}$-module.
On the other hand, $L(\lambda)_{\lambda}$ is also an irreducible $\mathfrak{l}$-module of highest weight $\lambda$. So by the characterization of the simple highest weight $\mathfrak{l}$-modules, $L(\lambda)^{n_{\mathfrak{p}}}=L(\lambda)_{\lambda}$.

\begin{Lemma}\label{lemma1self} Using the above notation, the following linear maps are injective
\begin{gather*} \Ext_{\q(n)}^1(L(\lambda),L(\lambda))\hookrightarrow \Ext_{\mathfrak{l}}^1(L(\lambda)^{n_{\mathfrak{p}}},L(\lambda)^{n_{\mathfrak{p}}}),\\
\Ext_{\q(n)}^1(L(\lambda),\Pi L(\lambda))\hookrightarrow \Ext_{\mathfrak{l}}^1(L(\lambda)^{n_{\mathfrak{p}}},\Pi L(\lambda)^{n_{\mathfrak{p}}}).
\end{gather*}
\end{Lemma}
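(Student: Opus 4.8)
The plan is to reduce the computation of self-extensions to relative Lie superalgebra cohomology and then exploit the ``parabolic invariants'' functor $M \mapsto M^{n_{\p}}$ together with a Hochschild--Serre type spectral sequence for the pair $(\g, \p)$ inside the pair $(\g, \g_{\bar 0})$. Concretely, recall $\Ext_{\q(n)}^1(L(\lambda), L(\lambda)) \cong \operatorname{H}^1(\g, \g_{\bar 0}; L(\lambda)^* \otimes L(\lambda))$ and similarly with $\Pi L(\lambda)$. So it suffices to show that the restriction-type map on $\operatorname{H}^1$ from the pair $(\g, \g_{\bar 0})$ to the pair $(\mathfrak{l}, \mathfrak{l}_{\bar 0})$, after taking $n_{\p}$-invariants on coefficients, is injective; here one uses that $L(\lambda)^{n_{\p}} = L(\lambda)_{\lambda}$ as an $\mathfrak{l}$-module, as established just above the lemma.

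First I would set up the standard five-term exact sequence (or low-degree part of the Hochschild--Serre spectral sequence) for the Lie superalgebra $n_{\p} \triangleleft \p \subset \g$, computing relative cohomology with respect to the even parts. The key input is that the $\p$-module $L(\lambda)$ has $n_{\p}$-invariants equal to a single weight space, so in degree $0$ the coefficient module $\operatorname{H}^0(n_{\p}; L(\lambda)^* \otimes L(\lambda))$ contains $L(\lambda)^{*}_{-\lambda} \otimes L(\lambda)_{\lambda}$ (up to lower-order pieces), which is exactly $\End_{\mathfrak l}(L(\lambda)^{n_{\p}})$-worth of data. The inflation map $\operatorname{H}^1(\p/n_{\p}, \ldots) = \operatorname{H}^1(\mathfrak l; (L(\lambda)^{n_{\p}})^* \otimes L(\lambda)^{n_{\p}}) \to \operatorname{H}^1(\p; L(\lambda)^* \otimes L(\lambda))$ has kernel governed by $\operatorname{H}^0(\mathfrak l; \operatorname{H}^1(n_{\p}; \ldots))$, and I would argue this vanishes using a weight argument: $\operatorname{H}^1(n_{\p}; L(\lambda)^* \otimes L(\lambda))$ lives in strictly positive $n_{\p}$-weight combinations, so it can carry no $\mathfrak l$-invariants, or more precisely no invariants in the relevant central-character/weight component. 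Then I would compose with the map $\operatorname{H}^1(\g, \g_{\bar 0}; -) \to \operatorname{H}^1(\p, \p_{\bar 0}; -)$ induced by restricting cocycles to $\p$; an extension of $\g$-modules that is $\p$-split is automatically $\g$-split because an extension of simples in $\mathcal F$ that splits over $\p$ splits over $\g$ (the splitting, being a $\p$-morphism onto a simple top, is forced by weight considerations to be a $\g$-morphism, just as in the argument identifying $L(\lambda)^{n_{\p}}$). Finally, passing to $n_{\p}$-invariants of the coefficient sheaf turns $\Ext^1_{\p}(L(\lambda), L(\lambda))$ into $\Ext^1_{\mathfrak l}(L(\lambda)^{n_{\p}}, L(\lambda)^{n_{\p}})$.

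Alternatively — and this is probably the cleaner route — I would argue directly on the level of extensions rather than through spectral sequences: given a non-split short exact sequence $0 \to L(\lambda) \to E \to L(\lambda) \to 0$ in $\mathcal F$ (or with $\Pi L(\lambda)$ on the left), apply the left-exact functor $(-)^{n_{\p}}$ to get an exact sequence of $\mathfrak l$-modules $0 \to L(\lambda)^{n_{\p}} \to E^{n_{\p}} \to L(\lambda)^{n_{\p}}$, and show that the last map is surjective and the sequence is non-split. Surjectivity on $n_{\p}$-invariants follows because $E^{n_{\p}}$ must already have dimension at least $2 \dim L(\lambda)^{n_{\p}}$: the $\lambda$-weight space $E_{\lambda}$ has dimension $2 \dim L(\lambda)_{\lambda}$ (since $\lambda$ is the highest weight of both composition factors and $\lambda$-weight spaces are exact), and $E_{\lambda} \subseteq E^{n_{\p}}$ because $n_{\p}$ strictly lowers weights out of the $\lambda$-weight space while $E$ has no weights above $\lambda$. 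For non-splitness: an $\mathfrak l$-splitting $s \colon L(\lambda)^{n_{\p}} \to E^{n_{\p}}$ would have image generating, under $U(\g)$, a $\g$-submodule of $E$ isomorphic to a quotient of $L(\lambda)$, hence splitting the original sequence — here one reuses the exact argument from the paragraph preceding the lemma (an $\mathfrak l$-stable piece of $E^{n_{\p}}$ generates a proper $\g$-submodule or all of $E$). The parity bookkeeping for the $\Pi$-twisted statement is identical since $(-)^{n_{\p}}$ is parity-preserving and $\Pi$ commutes with it.

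The main obstacle I anticipate is justifying the injectivity of the second arrow — i.e.\ that a $\p$-split (equivalently $n_{\p}$-invariants-split) extension is genuinely $\g$-split — with full rigor: one must verify that the candidate $\g$-submodule of $E$ generated by $s(L(\lambda)^{n_{\p}})$ really is a complement, which amounts to checking it meets the submodule $L(\lambda) \subseteq E$ trivially, and this again rests on the simplicity of $L(\lambda)^{n_{\p}}$ as an $\mathfrak l$-module plus the weight-space dimension count. A secondary technical point is confirming that $(-)^{n_{\p}}$ is exact enough in the range we need (it is only left exact in general, but the surjectivity on the right is salvaged by the dimension argument on weight spaces, which is special to this setup). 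Everything else is bookkeeping with weights and the relative-cohomology dictionary already recorded in Section~\ref{section3.1}.
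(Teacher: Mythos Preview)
Your ``alternative'' direct approach on the level of extensions is exactly the argument the paper gives: assume the sequence of $n_{\p}$-invariants splits via some $\delta\colon L(\lambda)_\lambda \to M_\lambda$, set $L' = U(\g)\cdot \delta(L(\lambda)_\lambda)$, check $M = \phi(L(\lambda)) + L'$, and verify the sum is direct by intersecting at the $\lambda$-weight space (your anticipated ``main obstacle'' and its resolution match the paper's one-line verification). Your first route via the Hochschild--Serre five-term sequence is unnecessary here and the sketched vanishing of $H^0(\mathfrak l; H^1(n_\p;\ldots))$ by a weight argument is not quite right as stated, so you were correct to prefer the direct argument.
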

\begin{proof}
 Suppose we had a sequence of $\q(n)$-modules $0\rightarrow L(\lambda)\rightarrow M\rightarrow L(\lambda)\rightarrow 0$ such that taking $n_{\p}$ invariants results in a split short exact sequence of $\mathfrak{l}$-modules
\[ 0\rightarrow L(\lambda)^{n_{\mathfrak{p}}}\xrightarrow{\phi} M^{n_{\p}}\xrightarrow{\psi} L(\lambda)^{n_{\mathfrak{p}}}
\rightarrow 0.\]
 From before, we know this sequence is the same as
\[0\rightarrow L(\lambda)_{\lambda}\xrightarrow{\phi} M_{\lambda}\xrightarrow{\psi} L(\lambda)_{\lambda}
\rightarrow 0.\]

Existence of a splitting maps means there exists an $\mathfrak{l}$-module homomorphism $\delta\colon L(\lambda)_{\lambda}\rightarrow M_{\lambda}$ such that $\psi\circ \delta =\text{id}_{L(\lambda)_{\lambda}}$. Thus, we know
\[ M_{\lambda} = \phi(L(\lambda)_{\lambda})\oplus \delta(L(\lambda)_{\lambda}).\]
Let $L' = U(\q(n)).\delta(L(\lambda)_{\lambda})$. Then
\begin{align*}
M=U(\q(n)).M_{\lambda} &= U(\q(n)).(\phi(L(\lambda)_{\lambda})\oplus \delta(L(\lambda)_{\lambda}))\\
&\subset U(\q(n))\phi(L(\lambda)_{\lambda})+ U(\q(n)) \delta(L(\lambda)_{\lambda}) \\
&  =\phi(L(\lambda))+L'
\end{align*}
where in the last line we use that $\phi$ is a $\q(n)$-module homomorphism and $U(\q(n))L(\lambda)_{\lambda}=L(\lambda)$. Thus $M=\phi(L(\lambda))+ L'$.

To show the sum is direct, observe $\phi$ injective and $L(\lambda)$ simple implies $\phi(L(\lambda))$ is simple, so $\phi(L(\lambda))\cap L'= 0$ or $\phi(L(\lambda))$. The case
$\phi(L(\lambda))\cap L'= \phi(L(\lambda))$ is impossible, as
\[(L'\cap\Phi(L(\lambda)))_\lambda=L'_\lambda\cap(\Phi(L(\lambda)))_\lambda =0.\]
This shows that the sequence of $\q(n)$ modules splits also.
\end{proof}

If $m=0$, then $\Ext^1_{\q(n)}(L(\lambda),L(\lambda))=0$. This follows from
\[\dim\Ext_{\mathfrak{q}(n)}^1(L(\lambda),\Pi L(\lambda))\leq \dim\Ext_{\mathfrak{h}(n)}^1(L(\lambda)^{n_{\p}},\Pi L(\lambda)^{n_{\p}})=0,\]
 where we used $m=0\Rightarrow \mathfrak{l} = \mathfrak{h}$ and Lemma~\ref{lemma1self} for the first step, and the well known fact that Clifford supermodules are semisimple when $\lambda$ is nondegenerate, for the second step \cite{Mein}.
\begin{Lemma} \label{lemma2self} If $\lambda$ is as in Theorem~{\rm \ref{selfextensions}} and $m>0$ and $v(\lambda)$ is considered as a simple $\mathfrak l$-module, then $\Ext^1_{\mathfrak l}(v(\lambda),\Pi v(\lambda))= \C$
 and $\Ext^1_{\mathfrak l}(v(\lambda),v(\lambda))=0$ if $v(\lambda)$ is of Type M.
 \end{Lemma}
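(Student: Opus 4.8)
The plan is to restrict the question to the Levi $\mathfrak{l}=\q(m)\oplus\h'$ and isolate the contribution of the small queer factor $\q(m)$, where, via Theorem~\ref{Ext C,C}, everything reduces to a statement about the trivial module.

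First I would note that the middle $m$ coordinates of $\lambda$ vanish, so $\q(m)$ acts trivially on $v(\lambda)$: as an $\mathfrak{l}$-module, $v(\lambda)\cong\C\boxtimes v'$, where $\C$ is the trivial $\q(m)$-module and $v':=v(\lambda|_{\h'})$ is the Clifford module over $\h'$ attached to the restriction of $\lambda$. Because all coordinates of $\lambda|_{\h'}$ are nonzero, Lemma~\ref{dim Clifford} shows $F_{\lambda|_{\h'}}$ is nondegenerate, so $v'$ is simple over the honest Clifford algebra $\operatorname{Cliff}(\lambda|_{\h'})$; hence $\Ext^1_{\h'}(v',v')=0$, and the even part of $\End_{\h'}(v')$ is one-dimensional --- equal to $\C$ when $v'$, equivalently $v(\lambda)$, is of type~M.

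Next I would use that along the direct-sum decomposition $\mathfrak{l}=\q(m)\oplus\h'$ the relative cochain complex $C^\bullet(\mathfrak{l},\mathfrak{l}_{\bar0};-)$ factors as a tensor product, so for outer tensor products of modules one gets, for $A\in\{\C,\Pi\C\}$,
\[
\Ext^1_{\mathfrak{l}}(\C\boxtimes v',\,A\boxtimes v')\;\cong\;\big(\Ext^1_{\q(m)}(\C,A)\otimes\Hom_{\h'}(v',v')\big)\oplus\big(\Hom_{\q(m)}(\C,A)\otimes\Ext^1_{\h'}(v',v')\big).
\]
By Theorem~\ref{Ext C,C}, $\Ext^1_{\q(m)}(\C,\C)=0$, whereas $\Ext^1_{\q(m)}(\C,\Pi\C)\cong S^1(\q(m)_{\bar0}^{*})^{\q(m)_{\bar0}}=(\gl(m)^{*})^{\gl(m)}$ is one-dimensional (spanned by the trace functional, using $m\geq 1$); also $\Hom_{\q(m)}(\C,\Pi\C)=0$ and $\Hom_{\q(m)}(\C,\C)=\C$. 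Writing $\Pi v(\lambda)\cong\Pi\C\boxtimes v'$ (placing the parity shift on the trivial $\q(m)$-factor), the display with $A=\Pi\C$ yields $\Ext^1_{\mathfrak{l}}(v(\lambda),\Pi v(\lambda))\cong\C$; the display with $A=\C$, together with $\Ext^1_{\h'}(v',v')=0$ and the type-M hypothesis, yields $\Ext^1_{\mathfrak{l}}(v(\lambda),v(\lambda))=0$.

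The delicate point is the parity bookkeeping in the last step: one must check that the nonzero self-extension is the one detected by $\Ext^1_{\q(m)}(\C,\Pi\C)$, i.e.\ that a non-split $\q(m)$-module extension $0\to\Pi\C\to E_0\to\C\to 0$ (which exists by Theorem~\ref{Ext C,C}) produces, after $-\boxtimes v'$, a non-split $\mathfrak{l}$-module extension $0\to\Pi v(\lambda)\to E_0\boxtimes v'\to v(\lambda)\to 0$, with the odd part of $\q(m)$ acting by an operator that intertwines the $\h'$-actions only in the sign-twisted way appropriate to a map $v'\to\Pi v'$. The type-M hypothesis is used precisely to make $v(\lambda)=\C\boxtimes v'$ carry a type-M Clifford factor, so that the two tensor factors decouple cleanly; for type~Q one would instead obtain $\Ext^1_{\mathfrak{l}}(v(\lambda),v(\lambda))\cong\C$.
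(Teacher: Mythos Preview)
Your approach is the same as the paper's: write $v(\lambda)=\C\boxtimes v(\lambda')$ for $\mathfrak l=\q(m)\oplus\h'$, invoke a K\"unneth decomposition, and feed in Theorem~\ref{Ext C,C} together with projectivity of $v(\lambda')$ over~$\h'$.

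There is, however, a genuine gap in your K\"unneth formula. In the category $\mathcal F$ morphisms are parity-preserving, so when the cochain complex factors over $\q(m)\oplus\h'$ you must take the \emph{even} part of the tensor product, which produces parity-twisted cross terms. The paper records this as a separate lemma (Lemma~\ref{eqcohomology}); after using $\Ext^1_{\h'}(v(\lambda'),-)=0$ it reads
\[
\Ext^1_{\mathfrak l}(v(\lambda),A\boxtimes v')\;\cong\;\Ext^1_{\q(m)}(\C,A)\otimes\Hom_{\h'}(v',v')\ \oplus\ \Ext^1_{\q(m)}(\C,\Pi A)\otimes\Hom_{\h'}(v',\Pi v').
\]
Your displayed formula omits the second summand. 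For $A=\Pi\C$ this omission is harmless (the missing term has $\Ext^1_{\q(m)}(\C,\C)=0$), but for $A=\C$ the missing term is $\Ext^1_{\q(m)}(\C,\Pi\C)\otimes\Hom_{\h'}(v',\Pi v')\cong\Hom_{\h'}(v',\Pi v')$, and it is precisely here that the type-M hypothesis is used: $\Hom_{\h'}(v',\Pi v')=0$ iff $v'$ (equivalently $v(\lambda)$) is of type~M. Your formula as written yields $0$ regardless of type, which is inconsistent with your own final remark that type~Q gives $\C$. Your closing paragraph gestures at the issue (``decouple cleanly'') but does not actually supply the missing term; once you insert it, the argument is identical to the paper's.
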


\begin{proof}We start with general observation.
\begin{Lemma}\label{eqcohomology} Suppose $\g=A\oplus B$, where $A,B$ are Lie superalgebras and $M=M_A\boxtimes M_B$ is a~$\g$-supermodule. Then
\begin{gather*}
H^1(\g,\g_0; M) =H^1(A,A_0; M_A)\boxtimes H^0(B,B_0; M_B)\oplus H^1(A,A_0; \Pi M_A)\boxtimes H^0(B,B_0; \Pi M_B)\\
\hphantom{H^1(\g,\g_0; M) =}{} \oplus H^0(A,A_0; M_A)\boxtimes H^1(B,B_0; M_B)\\
\hphantom{H^1(\g,\g_0; M) =}{} \oplus H^1(A,A_0; \Pi M_A)\boxtimes H^1(B,B_0; \Pi M_B).
\end{gather*}
\end{Lemma}

Now write $v(\lambda)=\mathbf C\boxtimes v(\lambda')$ for $\mathfrak l=\q(m)\oplus\h'$.
Then since $v(\lambda')$ is a projective $\h'$-module we have
\begin{gather*}
\Ext^1_{\mathfrak l}(v(\lambda),\Pi v(\lambda))=\Ext^1_{\q(m)}(\mathbf C,\mathbf C)\otimes \Hom_{\h'}(v(\lambda'),\Pi v(\lambda'))\\
\hphantom{\Ext^1_{\mathfrak l}(v(\lambda),\Pi v(\lambda))=}{}
\oplus\Ext^1_{\q(m)}(\mathbf C,\Pi\mathbf C)\otimes \Hom_{\h'}(v(\lambda'),v(\lambda'))
\end{gather*}
and
\begin{gather*}
\Ext^1_{\mathfrak l}(v(\lambda),v(\lambda))=\Ext^1_{\q(m)}(\mathbf C,\mathbf C)\otimes \Hom_{\h'}(v(\lambda'),v(\lambda'))\\
\hphantom{\Ext^1_{\mathfrak l}(v(\lambda),v(\lambda))=}{}
\oplus\Ext^1_{\q(m)}(\mathbf C,\Pi\mathbf C)\otimes \Hom_{\h'}(v(\lambda'),\Pi v(\lambda')).
\end{gather*}
 By Theorem~\ref{Ext C,C} we have $\Ext^1_{\q(m)}(\mathbf C,\mathbf C)=0$ and $\Ext^1_{\q(m)}(\mathbf C,\Pi\mathbf C)=\mathbf C$. The lemma follows.
\end{proof}

By Lemmas~\ref{lemma1self} and~\ref{lemma2self} we have that
\[\dim \Ext^1_{\q(n)}(L(\lambda),\Pi L(\lambda))\leq 1\qquad \text{and}\qquad \Ext^1_{\q(n)}(L(\lambda),L(\lambda))=0\]
if $L(\lambda)$ is not isomorphic to $\Pi L(\lambda)$.

It remains to show that there exists a non-trivial extension between $L(\lambda)$ and $\Pi L(\lambda)$. For this we consider an indecomposable $(1|1)$-dimensional $q(n)$-module $U$ with a basis
$u\in U_{\bar 0}$, $\bar u\in U_{\bar 1}$ and with action given by
\[ X\bar u=0,\qquad X u=\operatorname{otr}X \bar u, \qquad \forall\, \ X\in\q(n).\]
Then we have an exact sequence of $\q(n)$-modules
\[ 0\to \Pi L(\lambda)\to L(\lambda)\otimes U\to L(\lambda)\to 0.\]
To see that it does not split take $p$ such that $\lambda_p=0$. On the weight space $(L(\lambda)\otimes U)_\lambda$ the odd basis element $\overline{H}_p$ acts non-trivially while
its action on $(L(\lambda)\oplus\Pi L(\lambda))_\lambda$ is obviously trivial.
The proof of Theorem \ref{selfextensions} is complete.
\end{proof}

\subsection[Self extensions for sq(n)]{Self extensions for $\boldsymbol{\mathfrak{sq}(n)}$}\label{section4.2}
\begin{thm}\label{selfextensions2}
 Let $\lambda = (\lambda_1,\dots,\lambda_k,0,\dots,0, -\lambda_{k+m+1},\dots,-\lambda_n)$ such that all $\lambda_i>0$ be a dominant integral weight in $\q(n)$.
 \begin{enumerate}\itemsep=0pt
\item[$1.$] $\Ext_{\sq(n)}^1(L_{\sq(n)}(\lambda), L_{\sq(n)}(\lambda))=0$ if $L(\lambda)\neq\Pi L(\lambda)$;
\item[$2.$] $\Ext_{\sq(n)}^1(L_{\sq(n)}(\lambda),\Pi L_{\sq(n)}(\lambda))=\Ext_{\sq(n)}^1(L_{\sq(n)}(\lambda), L_{\sq(n)}(\lambda))=0$
if $m>0$ or $m=0$ and $\frac{1}{\lambda_1}+\dots+\frac{1}{\lambda_n}\neq 0$;
\item[$3.$] $\Ext_{\sq(n)}^1(L_{\sq(n)}(\lambda),\Pi L_{\sq(n)}(\lambda))=\C$ if $\frac{1}{\lambda_1}+\dots+\frac{1}{\lambda_n}=0$.
\end{enumerate}
\end{thm}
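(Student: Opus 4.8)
\emph{Overall strategy.} I would imitate the proof of Theorem~\ref{selfextensions}, reducing to the Levi of a parabolic by taking invariants of the lower nilradical — this handles~(1), the vanishing in~(2) when $m=0$, and the upper bound needed in~(3) — and then close the two cases the reduction leaves open (the lower bound in~(3), and the vanishing in~(2) when $m>0$) with the induction/restriction functors between $\sq(n)$ and $\q(n)$ and with Theorem~\ref{selfextensions} itself. Throughout write $v'(\lambda)$ for the simple $\h'$-Clifford module, so $L_{\sq(n)}(\lambda)_\lambda\cong v'(\lambda)$, and let $\p\subset\q(n)$, with Levi $\mathfrak l$ and nilradical $n_\p$, be the parabolic used for Theorem~\ref{selfextensions}.

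\emph{Parabolic reduction.} Put $\mathfrak l':=\mathfrak l\cap\sq(n)$. The argument of Lemma~\ref{lemma1self} goes through verbatim, as it uses only highest weight theory and that $L_{\sq(n)}(\lambda)$ is generated by its $\lambda$-weight space, giving injections
\[
\Ext^1_{\sq(n)}\bigl(L_{\sq(n)}(\lambda),L_{\sq(n)}(\lambda)\bigr)\hookrightarrow\Ext^1_{\mathfrak l'}\bigl(v'(\lambda),v'(\lambda)\bigr),\qquad
\Ext^1_{\sq(n)}\bigl(L_{\sq(n)}(\lambda),\Pi L_{\sq(n)}(\lambda)\bigr)\hookrightarrow\Ext^1_{\mathfrak l'}\bigl(v'(\lambda),\Pi v'(\lambda)\bigr).
\]
For $m=0$ the Levi is $\mathfrak l'=\h'$, and these groups are computed inside $A$-mod for $A:=\operatorname{Cliff}(\h'_{\bar1},F'_\lambda)$. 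By Lemma~\ref{dim Clifford}, $F'_\lambda$ is nondegenerate precisely when $\tfrac1{\lambda_1}+\cdots+\tfrac1{\lambda_n}\neq0$; then $A$ is semisimple, both right-hand groups vanish, and statement~(1) and the $m=0$ case of~(2) follow. If $\tfrac1{\lambda_1}+\cdots+\tfrac1{\lambda_n}=0$ then $F'_\lambda$ has one-dimensional radical $\C z$, $z=\sum_i\lambda_i^{-1}\bar H_i$ odd with $z^2=0$; since $z$ supercommutes with $A$ one gets $\operatorname{rad}A=zA$ with $(zA)^2=0$, and $zA\cong\Pi(A/\operatorname{rad}A)$ as an $A/\operatorname{rad}A$-bimodule because $z$ is odd, hence $\Ext^1_{\mathfrak l'}(v'(\lambda),\Pi v'(\lambda))=\C$ and $\Ext^1_{\mathfrak l'}(v'(\lambda),v'(\lambda))=0$. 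This proves the type~M case of~(1) and the bound $\dim\Ext^1_{\sq(n)}(L_{\sq(n)}(\lambda),\Pi L_{\sq(n)}(\lambda))\le1$ needed for~(3).

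\emph{Lower bound in~(3) and the case $m>0$.} When $\tfrac1{\lambda_1}+\cdots+\tfrac1{\lambda_n}=0$ the element $z=\sum_i\lambda_i^{-1}\bar H_i$ lies in $\sq(n)$ (its odd trace is $0$); it kills every simple $\sq(n)$-module but acts nontrivially on the highest weight space $v(\lambda)$ of $L_{\q(n)}(\lambda)$, because $F_\lambda$ is nondegenerate on the $\q(n)$-side and $\operatorname{Cliff}(E_\lambda)$ acts faithfully on $v(\lambda)$. So $\Res_{\sq(n)}L_{\q(n)}(\lambda)$ is not semisimple at weight $\lambda$, and passing to the simple head $L_{\sq(n)}(\lambda)$ and the submodule generated by the $\lambda$-weight space produces a nonsplit $0\to\Pi L_{\sq(n)}(\lambda)\to M\to L_{\sq(n)}(\lambda)\to0$; with the bound above, $\Ext^1_{\sq(n)}(L_{\sq(n)}(\lambda),\Pi L_{\sq(n)}(\lambda))=\C$, proving~(3). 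For $m\ge2$ the form $F'_\lambda$ is again degenerate, so the reduction only gives an upper bound; instead use that $U(\q(n))$ is free of rank $(1|1)$ over $U(\sq(n))$ on $1$ and $\bar I=\sum_i\bar H_i$ (with $\bar I^2=\sum_iH_i\in U(\sq(n))$), so that for $m>0$ one has $\Res_{\sq(n)}L_{\q(n)}(\lambda)=L_{\sq(n)}(\lambda)$, and by the tensor identity $\Ind L_{\sq(n)}(\lambda)\cong U\otimes L_{\q(n)}(\lambda)=\frac{L_{\q(n)}(\lambda)}{\Pi L_{\q(n)}(\lambda)}$, where $U\cong\Ind\C$ is the indecomposable $(1|1)$-module from the proof of Theorem~\ref{selfextensions}; in particular $\Res\,\Ind L_{\sq(n)}(\lambda)=L_{\sq(n)}(\lambda)\oplus\Pi L_{\sq(n)}(\lambda)$. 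The $(\Ind,\Res)$-adjunction then gives
\[
\dim\Ext^1_{\sq(n)}\bigl(L_{\sq(n)}(\lambda),L_{\sq(n)}(\lambda)\bigr)+\dim\Ext^1_{\sq(n)}\bigl(L_{\sq(n)}(\lambda),\Pi L_{\sq(n)}(\lambda)\bigr)=\dim\Ext^1_{\q(n)}\bigl(\Ind L_{\sq(n)}(\lambda),\Ind L_{\sq(n)}(\lambda)\bigr),
\]
and, using $U^*\otimes U\cong U\oplus\Pi U$, the right-hand side equals $\Ext^1_{\q(n)}\bigl(L_{\q(n)}(\lambda),\tfrac{L_{\q(n)}(\lambda)}{\Pi L_{\q(n)}(\lambda)}\bigr)\oplus\Ext^1_{\q(n)}\bigl(L_{\q(n)}(\lambda),\tfrac{\Pi L_{\q(n)}(\lambda)}{L_{\q(n)}(\lambda)}\bigr)$. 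Both summands vanish by the long exact sequences, using $\Ext^1_{\q(n)}(L,L)=0$ (Theorem~\ref{selfextensions}, type~M) and the nonvanishing of the Yoneda square of the generator of $\Ext^1_{\q(n)}(L_{\q(n)}(\lambda),\Pi L_{\q(n)}(\lambda))$ in $\Ext^2_{\q(n)}$; hence both $\sq(n)$-Ext-groups vanish, finishing~(2) and the remaining part of~(1) (the type~Q case being identical with self-extensions of $L_{\q(n)}(\lambda)$ in place of $L\to\Pi L$).

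\emph{Main obstacle.} I expect the $m>0$ case to be where the work concentrates: one must establish $\Res_{\sq(n)}L_{\q(n)}(\lambda)=L_{\sq(n)}(\lambda)$, equivalently that the two simple modules have the same dimension when $m>0$ (which should follow from comparing their characters), and one must verify that the relevant Yoneda product lands nontrivially in $\Ext^2_{\q(n)}$, a point sensitive to which — possibly standard or principal — $\q(n)$-block contains $\lambda$.
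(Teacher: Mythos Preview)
Your parabolic reduction and the $m=0$ analysis (nondegenerate Clifford form when $\sum\lambda_i^{-1}\neq0$, one-dimensional radical giving the upper bound for (3)) match the paper's proof essentially verbatim, as does the construction of the nontrivial extension in (3) by restricting $L_{\q(n)}(\lambda)$ to $\sq(n)$; the paper phrases the nonsplitting via Frobenius reciprocity $\Hom_{\sq}(\Pi L_{\sq}(\lambda),L_\q(\lambda))=\Hom_\q(\Pi L_\q(\lambda),L_\q(\lambda))$ rather than a direct weight-space computation, but the content is the same. Your first listed obstacle, $\Res L_\q(\lambda)=L_{\sq}(\lambda)$ for $m>0$, is immediate from Lemma~\ref{dim Clifford}(a): the inclusion $\h'_{\bar1}\hookrightarrow\h_{\bar1}$ induces an isometry $E'_\lambda\xrightarrow{\sim}E_\lambda$, so $v(\lambda)=v'(\lambda)$ and the restriction is simple.

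The $m>0$ vanishing is where you diverge from the paper, and where your proposal has a genuine gap. Your $\Ind/\Res$ framework is correct: the tensor identity gives $\Ind L_{\sq}(\lambda)\cong U\otimes L_\q(\lambda)$, hence $\Res\Ind L_{\sq}(\lambda)=L_{\sq}(\lambda)\oplus\Pi L_{\sq}(\lambda)$ since $\Res U$ is trivial, and $U^*\otimes U\cong U\oplus\Pi U$ as you say. One resulting summand, $\Ext^1_\q(L_\q,U\otimes L_\q)$, does vanish by the long exact sequence. But the other, $\Ext^1_\q(L_\q,\Pi U\otimes L_\q)$, is the kernel of the connecting map $\Ext^1_\q(L_\q,\Pi L_\q)\to\Ext^2_\q(L_\q,L_\q)$, and the injectivity of this map is precisely the Yoneda nonvanishing you flag. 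For $L_\q=\C$ it reduces via Theorem~\ref{Ext C,C} to $\operatorname{tr}^2\neq0$ in $S^2(\gl_n^*)^{\gl_n}$, but for general $L_\q(\lambda)$ you would need that the cocycle $(X,Y)\mapsto\operatorname{otr}(X)\operatorname{otr}(Y)\cdot\operatorname{id}_L$ is not a coboundary in $C^\bullet(\g,\g_{\bar0};\End L)$, and this is not free: the $1$-cochains $\Hom_{\g_{\bar0}}(\g_{\bar1},\End(L)_{\bar1})$ can be large.

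The paper bypasses this entirely by staying inside the Levi. For $m>0$ it observes that $\sq(m)$ is an ideal in $\mathfrak l'=\mathfrak l\cap\sq(n)$ with $\mathfrak l'/\sq(m)\cong\h'$ (the centralizer of $\q(m)$ in $\h$), acting trivially on $v'(\lambda)$. The single new ingredient is $\Ext^1_{\sq(m)}(\C,\C)=\Ext^1_{\sq(m)}(\C,\Pi\C)=0$, which holds because $(\sq(m))_{\bar1}\cong\Pi\mathfrak{sl}_m$ has no $\gl_m$-invariants in degree~$1$ (the odd trace has been removed). The Hochschild--Serre spectral sequence for the ideal then collapses $\Ext^1_{\mathfrak l'}(v'(\lambda),(\Pi)v'(\lambda))$ onto $\Ext^1_{\h'}(v'(\lambda),(\Pi)v'(\lambda))$, which vanishes because all $\h'$-entries of $\lambda$ are nonzero so the Clifford module is projective. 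This replaces your $\Ext^2$ computation with a two-line argument.
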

\begin{proof} Note that Lemma~\ref{lemma1self} can be generalized to the case of $\sq(n)$, namely if $\mathfrak{p}'=\p\cap\sq(n)$, $\mathfrak{l}'=\mathfrak{l}\cap\sq(n)$ and $v'(\lambda)$ is the irreducible $\mathfrak{l}'$-module, the map
\[ \Ext_{\sq(n)}^1(L_{\sq(n)}(\lambda),(\Pi)L_{\sq(n)}(\lambda))\hookrightarrow \Ext_{\mathfrak{l}'}^1(v'(\lambda),(\Pi)v'(\lambda))\]
is injective.
We claim that $\Ext_{\mathfrak{l}'}^1(v'(\lambda),(\Pi)v'(\lambda))=0$ for all $\lambda$ which do not satisfy (3). Indeed, if $m=0$ and $\frac{1}{\lambda_1}+\dots+\frac{1}{\lambda_n}\neq 0$, $K'_\lambda=0$ (see Section~\ref{section2.3}) and hence
$v'(\lambda)$ is projective. If $m>0$, then $\sq(m)$ is an ideal in $\mathfrak{l}'$ which acts trivially on $v'(\lambda)$ and $\Pi v'(\lambda)$ and the quotients $\mathfrak{l}'/\sq(m)\cong \mathfrak{l}/\q(m)\cong \h'$. Since
\[\Ext^1_{\sq(m)}(\C,\C)=\Ext^1_{\sq(m)}(\C,\Pi\C)=0,\]
using spectral sequence we get
\[H^1(\mathfrak{l}',\mathfrak{l}'_{\bar{0}}, v'(\lambda)^*\otimes (\Pi)v'(\lambda))\cong H^1(\h',\h'_{\bar{0}}, v'(\lambda)^*\otimes (\Pi)v'(\lambda))=0.\]

It remains to consider the case $\frac{1}{\lambda_1}+\dots+\frac{1}{\lambda_n}=0$. In this case one-dimensional $K'_\lambda$ lies in the radical of the corresponding Clifford algebra, therefore we have
\[\Ext^1_{\mathfrak{l}'}(v'(\lambda),\Pi v'(\lambda))=\C.\]
To construct a non-trivial extension over $\sq(n)$ consider the induced module $\mathrm{Ind}^{\q(n)}_{\sq(n)}L_{\sq(n)}(\lambda)$ isomorphic to $L(\lambda)=L_{\q(n)}(\lambda)$. It is the middle term of an exact sequence of $\sq(n)$-modules
\[0\to L_{\sq(n)}(\lambda)\xrightarrow{\varphi} L(\lambda)\to \Pi L_{\sq(n)}(\lambda)\to 0.\]
Let us check that the sequence does not split. Using Frobenius reciprocity, we compute
\[\Hom_{\sq(n)}(\Pi L_{\sq(n)}(\lambda), L(\lambda))=\operatorname{Hom}_{\q(n)}(\Pi L(\lambda),L(\lambda)).\]
If $n$ is odd, then $L_{\sq(n)}(\lambda)$ is isomorphic to $\Pi L_{\sq(n)}(\lambda)$ and $L(\lambda)$ is isomorphic to $\Pi L(\lambda)$ as a~$\q(n)$-module. So
\[\C\varphi=\Hom_{\sq(n)}(\Pi L_{\sq(n)}(\lambda), L(\lambda))=\operatorname{Hom}_{\q(n)}(\Pi L(\lambda),L(\lambda)).\]
If $n$ is even then $L_{\sq(n)}(\lambda)$ is not isomorphic to $\Pi L_{\sq(n)}(\lambda)$ and $L(\lambda)$ is not isomorphic to $\Pi L(\lambda)$ as an $\q(n)$-module.
Therefore
\[ \Hom_{\sq(n)}(\Pi L_{\sq(n)}(\lambda), L(\lambda))=\operatorname{Hom}_{\q(n)}(\Pi L(\lambda),L(\lambda))=0.\]
In both cases, the sequence does not split.
 \end{proof}

 \begin{cor}\label{Ind-Res}
 Let $\lambda\in\Lambda^+$ and let $\Res$, $\Ind$ denote $\Res_{\sq}^{\q}$, $\Ind_{\sq}^{\q}$ respectively.
 \begin{enumerate}\itemsep=0pt
 \item[$(a)$] If there exists $i$ such that $\lambda_i=0$, then
\[ \Res L(\lambda) = L_{\sq}(\lambda),\qquad \Ind L_{\sq}(\lambda) =\frac{L(\lambda)}{\Pi L(\lambda)}.\]
 \item[$(b)$] If all $\lambda_i\neq 0$ and $\frac{1}{\lambda_1}+\dots+\frac{1}{\lambda_n}\neq 0$, then
\[\Res L(\lambda) = L_{\sq}(\lambda)\oplus \Pi L_{\sq}(\lambda),\qquad \Ind L_{\sq}(\lambda) = L(\lambda).\]
 \item[$(c)$] If all $\lambda_i\neq 0$ and $\frac{1}{\lambda_1}+\dots+\frac{1}{\lambda_n}= 0$, then
\[\Res L(\lambda) = \frac{\Pi L_{\sq}(\lambda)}{ L_{\sq}(\lambda)}, \qquad \Ind L_{\sq}(\lambda) = L(\lambda).\]
\end{enumerate}
 \end{cor}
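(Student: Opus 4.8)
The plan is to establish Corollary~\ref{Ind-Res} by combining the three cases of Theorem~\ref{selfextensions2} with Lemma~\ref{dim Clifford}, Frobenius reciprocity, and the exact sequences already constructed in the proof of Theorem~\ref{selfextensions2}. The key observation throughout is that $\Res$ and $\Ind$ are both exact (since $\sq(n)\subset\q(n)$ is an ideal of codimension one, with $\q(n)/\sq(n)$ one-dimensional and odd), that $\Ind$ is left adjoint to $\Res$, and that $\Res L_{\q}(\lambda)$ is a semisimple $\h$-module over $\h_{\bar0}$ on its $\lambda$-weight space; the structure of $\Res L(\lambda)$ as an $\sq(n)$-module is therefore governed by how the Clifford module $v(\lambda)$ restricts to a $\h'$-module, which is exactly the content of Lemma~\ref{dim Clifford}.

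First I would treat case~(a). When some $\lambda_i=0$, Lemma~\ref{dim Clifford}(a) gives $\dim E_\lambda = \dim E'_\lambda$, so the Clifford module $v(\lambda)$ remains irreducible upon restriction to $\h'$, which forces $\Res L_{\q}(\lambda)=L_{\sq}(\lambda)$ (it is a highest weight $\sq(n)$-module with the same, now irreducible, highest weight space, and it is simple because any $\sq(n)$-submodule would generate a $\q(n)$-submodule using that $\q(n)=\sq(n)+\C\overline H_1$ and $\overline H_1$ preserves the isotypic decomposition). Then by adjunction and exactness, $\Ind L_{\sq}(\lambda)$ is a module of length two with both composition factors $L(\lambda),\Pi L(\lambda)$ up to parity; that it is the \emph{nonsplit} extension $\frac{L(\lambda)}{\Pi L(\lambda)}$ is precisely Theorem~\ref{selfextensions2}(2)--(3): since $m>0$ or ($m=0$ and $\sum 1/\lambda_i\neq0$) does not apply here in general, I instead argue directly that $\operatorname{Hom}_{\q(n)}(\Ind L_{\sq}(\lambda),L(\lambda))=\operatorname{Hom}_{\sq(n)}(L_{\sq}(\lambda),\Res L(\lambda))=\operatorname{Hom}_{\sq(n)}(L_{\sq}(\lambda),L_{\sq}(\lambda))=\C$ is one-dimensional, so $\Ind L_{\sq}(\lambda)$ is indecomposable, hence the nonsplit self-extension of $L(\lambda)$ by $\Pi L(\lambda)$ supplied in the proof of Theorem~\ref{selfextensions2}(3) — and its parity ordering matches the exact sequence $0\to L_{\sq}(\lambda)\to L(\lambda)\to \Pi L_{\sq}(\lambda)\to 0$ from that proof read the other direction.

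Next, case~(b): all $\lambda_i\neq0$ and $\sum 1/\lambda_i\neq0$. Here Lemma~\ref{dim Clifford}(b) gives $\dim E_\lambda = \dim E'_\lambda +1$, which means $v(\lambda)$ restricted to $\h'$ splits as $v'(\lambda)\oplus\Pi v'(\lambda)$ (the drop of one in Clifford dimension halves the module in a parity-symmetric way). Tracking this through the generation argument shows $\Res L(\lambda)=L_{\sq}(\lambda)\oplus\Pi L_{\sq}(\lambda)$; then $\Ind L_{\sq}(\lambda)$ has $\Res\Ind L_{\sq}(\lambda)=L_{\sq}(\lambda)\oplus\Pi L_{\sq}(\lambda)$ by the projection formula / Mackey-type computation, and since by Theorem~\ref{selfextensions2}(2) $\Ext^1_{\sq}(L_{\sq}(\lambda),\Pi L_{\sq}(\lambda))=0$ the module $\Ind L_{\sq}(\lambda)$ cannot be $\frac{L(\lambda)}{\Pi L(\lambda)}$'s restriction nontrivially — the cleanest route is $\operatorname{Hom}_{\q}(\Ind L_{\sq}(\lambda),L(\lambda))=\operatorname{Hom}_{\sq}(L_{\sq}(\lambda),\Res L(\lambda))=\C$ forcing $\Ind L_{\sq}(\lambda)$ to have simple top $L(\lambda)$, combined with the fact that $\Ind L_{\sq}(\lambda)$ and $\Res$ counting give it length one over $\q(n)$, so $\Ind L_{\sq}(\lambda)=L(\lambda)$. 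Case~(c) is the mirror image: Lemma~\ref{dim Clifford}(c) gives $\dim E'_\lambda = \dim E_\lambda - 2$, so the restriction of $v(\lambda)$ to $\h'$ is an indecomposable, non-semisimple module (the one-dimensional $K'_\lambda$ lies in the radical of the Clifford algebra as noted in the proof of Theorem~\ref{selfextensions2}), yielding $\Res L(\lambda)=\frac{\Pi L_{\sq}(\lambda)}{L_{\sq}(\lambda)}$; the parity ordering is pinned down by the exact sequence $0\to L_{\sq}(\lambda)\xrightarrow{\varphi}L(\lambda)\to\Pi L_{\sq}(\lambda)\to 0$ constructed in that proof, applied now with $\operatorname{Ind}^{\q}_{\sq}L_{\sq}(\lambda)\cong L(\lambda)$ (which also gives $\Ind L_{\sq}(\lambda)=L(\lambda)$ directly). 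The main obstacle I anticipate is bookkeeping the parity shifts consistently — making sure the socle/top ordering in (a) and (c) is the stated one rather than its parity-swap — which requires carefully matching the direction of the canonical map $\varphi$ and the adjunction unit/counit against the Clifford-module computations rather than any deeper input.
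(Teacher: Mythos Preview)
Your overall strategy matches the paper's: both proofs lean on Lemma~\ref{dim Clifford} to determine the structure of $v(\lambda)|_{\h'}$, then use dimension counting ($\dim\Res\Ind M = 2\dim M$) and Frobenius reciprocity to pin down $\Ind$ and $\Res$, with the nonsplit extensions imported from Theorems~\ref{selfextensions} and~\ref{selfextensions2}. Cases~(b) and~(c) are essentially fine (in~(b) the claim $\Hom_{\sq}(L_{\sq}(\lambda),\Res L(\lambda))=\C$ is not literally correct when $L_{\sq}(\lambda)$ is of type~Q, but the dimension argument alone already forces $\Ind L_{\sq}(\lambda)=L(\lambda)$, so this is harmless).

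There is, however, a genuine gap in case~(a). You argue that
\[
\Hom_{\q}(\Ind L_{\sq}(\lambda),L(\lambda))=\Hom_{\sq}(L_{\sq}(\lambda),L_{\sq}(\lambda))=\C
\]
forces $\Ind L_{\sq}(\lambda)$ to be indecomposable. This fails when $L(\lambda)$ is of type~M: if $\Ind L_{\sq}(\lambda)$ were the split module $L(\lambda)\oplus\Pi L(\lambda)$ with $L(\lambda)\not\cong\Pi L(\lambda)$, then $\Hom_{\q}(L(\lambda)\oplus\Pi L(\lambda),L(\lambda))=\C$ as well, so your computation does not distinguish the split and nonsplit cases. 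What you actually need is that the \emph{top} of $\Ind L_{\sq}(\lambda)$ is simple, and for that you must also compute $\Hom_{\q}(\Ind L_{\sq}(\lambda),\Pi L(\lambda))=\Hom_{\sq}(L_{\sq}(\lambda),\Pi L_{\sq}(\lambda))$; in case~(a) the types of $L(\lambda)$ and $L_{\sq}(\lambda)$ agree (Lemma~\ref{dim Clifford}(a)), so this second Hom vanishes in the type~M case and equals $\C$ in the type~Q case, and a short case analysis finishes the argument. The paper handles this by the same type~M/type~Q case split used in the proof of Theorem~\ref{selfextensions2}. (Also, your citation of ``Theorem~\ref{selfextensions2}(3)'' for the existence of the nonsplit $\q(n)$-extension in case~(a) is misplaced: that case concerns $\sum 1/\lambda_i=0$, whereas here some $\lambda_i=0$; the relevant construction is the module $L(\lambda)\otimes U$ from the proof of Theorem~\ref{selfextensions}.)
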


 \begin{proof} By PBW theorem for $\q(n)$, $\sq(n)$, given a finite-dimensional $\sq(n)$ module $M$, \linebreak $\dim\Res\Ind M = 2\dim M$. Suppose we are in case~(a). By Lemma~\ref{dim Clifford}(a), $\Res L(\lambda) = L_{\sq}(\lambda)$ and $\Ind L_{\sq}(\lambda)$ is the middle term of the exact sequence of $\q(n)$ modules
\[ 0\rightarrow L(\lambda)\xrightarrow{\phi} \Ind L_{\sq}(\lambda)\rightarrow \Pi L(\lambda)\rightarrow 0.\]
Now, since there some $\lambda_i=0$, $L_{\sq}(\lambda) \cong\Pi L_{\sq}(\lambda)$ if and only if $L(\lambda)\cong\Pi L(\lambda)$. Then repeating the argument from Theorem~\ref{selfextensions2}, we conclude the sequence is nonsplit. For case~(b), note that Lemma~\ref{dim Clifford}(b) implies $L(\lambda) \cong \Pi L(\lambda)$ if and only if $L_{\sq}(\lambda)\not\cong\Pi L_{\sq}(\lambda)$, and Theorem~\ref{selfextensions2} implies there is no self extension $\frac{L_{\sq}(\lambda)}{\Pi L_{\sq}(\lambda)}$. Finally, case~(c) was done in Theorem~\ref{selfextensions2}.
 \end{proof}

\section{Standard block}\label{section5}
In this section, we compute the Ext-quiver for the standard block of $\g=\q(3)$ and $\g=\mathfrak{sq}(3)$.

\subsection{Induction and restriction functors}\label{section5.1}
 \looseness=-1 Our goal is to establish a connection between the standard block $\mathcal F^n_{(1,0,\dots,0)}$ and the principal block $\mathcal F^{n-1}_{(0)}$. As a first step we use the geometric induction in the case when it is an exact functor.

Consider the parabolic subalgebra
\[ \mathfrak{p}= \h\oplus\bigoplus_{2\leq i\leq n}\g_{\ep_1-\ep_i}\oplus\bigoplus_{2\leq i\neq j\leq n}\g_{\ep_i-\ep_j}.\]
Its Levi subalgebra $\mathfrak{l}$ is isomorphic to $\q(1)\oplus\q(n-1)$. Let
\[n_{\p} = \bigoplus_{2\leq i\leq n}\g_{\ep_1-\ep_i}\]
denote the nilpotent radical of~$\p$.

 Let $t$ be a positive integer. A dominant integral weight $\lambda\in\Lambda^+$ of $\q(n)$ is called \textit{$t$-admissible} if $\lambda=(t,\lambda_2,\dots,\lambda_n)$ such that $t+\lambda_i\neq 0$, in other words the first mark
 of $\lambda$ is $t$ and $\lambda$ is $\p$-typical.

 Let $\mathcal F_{\mathfrak{l}}(t)$ denote the category of finite-dimensional $\mathfrak{l}$-modules on which $H_1$ acts by $t$ and all weights of $\q(n-1)$ have integral marks strictly less than $t$. Let $\mathcal F^n(t)$ denote the Serre
 subcategory of $\mathcal F_n$ generated
 by $L(\lambda)$ for all $t$-admissible $\lambda$. Define the functors
\[ \Gamma^t\colon \ \mathcal F_{\mathfrak{l}}(t)\to \mathcal F^n(t), \qquad R^t\colon \ \mathcal F^n(t)\to \mathcal F_{\mathfrak{l}}(t)\]
 by
\[\Gamma^t(M):=\Gamma_0(G/P, M),\qquad R^t:=\operatorname{Ker}(H_1-t).\]
\begin{prop}\label{tadmissible} The functors $\Gamma^t$ and $R^t$ define an equivalence between $\mathcal F_{\mathfrak{l}}(t)$ and $\mathcal F^n(t)$.
\end{prop}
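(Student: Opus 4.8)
\emph{Proof plan.} The strategy is the standard one for establishing such an equivalence: show that both functors are exact, exhibit them as an adjoint pair $\Gamma^t\dashv R^t$, and then verify that the unit and counit of the adjunction are isomorphisms on simple objects. Dévissage (the five lemma together with induction on composition length) then upgrades this to all objects, so the unit and counit are natural isomorphisms and the two categories are equivalent. Throughout one uses that, by construction, $\mathcal F_{\mathfrak l}(t)$ and $\mathcal F^n(t)$ have ``the same'' simple objects, labelled by the $t$-admissible weights $\lambda$ via the correspondence $L_{\mathfrak l}(\lambda)\leftrightarrow L(\lambda)$.

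\emph{Exactness.} For $R^t$: every object of $\mathcal F^n$ is semisimple over $\g_{\bar 0}\supset\C H_1$, so $H_1$ acts semisimply and $R^t(N)=\Ker(H_1-t)$ is the $t$-eigenspace of $H_1$, a vector-space direct summand; since $\mathfrak l$ centralises $H_1$ this is an $\mathfrak l$-submodule, and $N\mapsto\Ker(H_1-t)$ is an exact functor (the defining weight/mark conditions of $\mathcal F_{\mathfrak l}(t)$ are inherited, because $\p$-typicality forces every weight of $N\in\mathcal F^n(t)$ to have first mark $\le t$). For $\Gamma^t$: the typical lemma (Proposition~\ref{typicallemma}) applied to the $\p$-typical weight $\lambda$ gives $\Gamma_i(G/P,L_{\mathfrak l}(\lambda))=0$ for $i>0$ and $=L(\lambda)$ for $i=0$; feeding this into the long exact sequence of Proposition~\ref{induction functor properties}(1) and inducting on length, one gets that $\Gamma_i$ vanishes on all of $\mathcal F_{\mathfrak l}(t)$ for $i>0$, so $\Gamma^t=\Gamma_0(G/P,-)$ is exact. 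Moreover $\Gamma^t$ carries simples to simples inside the Serre subcategory $\mathcal F^n(t)$, hence maps $\mathcal F_{\mathfrak l}(t)$ into $\mathcal F^n(t)$.

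\emph{Adjunction.} Dualising the defining adjunction of $H^0(G/P,-)$ (right adjoint to $\Res\colon\g\text{-mod}\to\p\text{-mod}$) shows that $\Gamma_0(G/P,-)$ is left adjoint to $\Res_\p$; and inflation $\operatorname{infl}\colon\mathfrak l\text{-mod}\to\p\text{-mod}$ is left adjoint to $(-)^{n_\p}$. Hence $\Gamma^t=\Gamma_0(G/P,-)\circ\operatorname{infl}$ is left adjoint to $(-)^{n_\p}\circ\Res_\p$. One then checks that this right adjoint restricts to $R^t$ on $\mathcal F^n(t)$: for $N\in\mathcal F^n(t)$ the eigenspace $N_{H_1=t}$ is annihilated by $n_\p$ (whose roots raise the $H_1$-eigenvalue), while a weight vector $v\in N^{n_\p}$ with $H_1$-eigenvalue $s<t$ would, via the PBW factorisation $U(\g)=U(n_\p^-)U(\mathfrak l)U(n_\p)$ and the fact that $n_\p^-$ lowers and $\mathfrak l$ fixes the $H_1$-eigenvalue, generate a nonzero submodule of $N$ with all $H_1$-eigenvalues $\le s<t$, contradicting that every composition factor $L(\lambda)$ of $N$ has $\lambda_1=t$. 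Thus $R^t=(-)^{n_\p}\circ\Res_\p$ on $\mathcal F^n(t)$ and $\Gamma^t\dashv R^t$.

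\emph{Conclusion by dévissage.} On a simple $L_{\mathfrak l}(\lambda)$ the unit $\eta\colon\operatorname{id}\to R^t\Gamma^t$ is the map $L_{\mathfrak l}(\lambda)\to R^tL(\lambda)=L(\lambda)^{n_\p}$, which is an isomorphism since $L(\lambda)^{n_\p}$ is the simple $\mathfrak l$-module of highest weight $\lambda$ (the computation already used in Section~\ref{section4.1}); dually, on $L(\lambda)$ the counit $\ep\colon\Gamma^tR^t\to\operatorname{id}$ is a nonzero (by the triangle identities) endomorphism of $\Gamma^t(L_{\mathfrak l}(\lambda))=L(\lambda)$, hence an isomorphism. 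Since $R^t\Gamma^t$ and $\Gamma^tR^t$ are compositions of exact functors, applying the five lemma to short exact sequences and inducting on composition length shows $\eta$ and $\ep$ are isomorphisms on all objects, giving the equivalence. I expect the genuine content to be the exactness of $\Gamma^t$ — i.e. the vanishing of higher geometric induction, which is exactly where $\p$-typicality of $t$-admissible weights is used through the typical lemma — and the eigenspace-versus-$n_\p$-invariants identification that makes $R^t$ the honest adjoint; the remaining bookkeeping (that each functor lands in the correct subcategory, and the dévissage step) is routine.
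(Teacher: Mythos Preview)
Your proof is correct and follows essentially the same approach as the paper: the typical lemma gives exactness of $\Gamma^t$ and that it sends simples to simples, the identification $R^t(N)=N^{n_\p}$ on $\mathcal F^n(t)$ yields the adjunction $\Gamma^t\dashv R^t$, and the equivalence then follows. You simply spell out in detail (the eigenspace-versus-invariants argument, the d\'evissage via the five lemma) what the paper compresses into a single sentence.
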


\begin{proof} By Proposition \ref{typicallemma}, $\Gamma_i(G/P,M)=0$ for $i>0$ and every $M\in \mathcal F_{\mathfrak{l}}(t)$. Furthermore, $\Gamma_0(G/P,M)$ is simple if $M$ is simple. On the other hand, $R^t(N)=H^0(n_\p,N)$ for any $N\in\mathcal F^n(t)$.
 That implies $\Gamma^t$ is left adjoint to $R^t$, both functors are exact and establish bijection on the sets of isomorphism classes of simple objects in both categories. Hence these functors provide an equivalence between the
 two categories.
 \end{proof}

\subsection[Reduction to q(n-1)]{Reduction to $\boldsymbol{\q(n-1)}$}\label{section5.2}

Note that every module in $\mathcal F_l(t)$ is of the form $L_{\q(1)}\boxtimes M$ for some $M\in\mathcal F^{n-1}$.

\begin{cor}\label{Standard block} Let $\lambda = (\lambda_1,\dots,\lambda_k, 1, 0,\dots,0,-\lambda_k,\dots,-\lambda_1)$, $\mu = (\mu_1,\dots,\mu_{k'},1,0,\dots,0,\linebreak-\mu_{k'},\dots,-\mu_1)$ be $\q(n)$ dominant weights in the standard block. For $t>>\max\{\lambda_1,\mu_1\}$,
\begin{align*}
\Ext_{\q(n)}^1(L(\lambda),L(\mu)) &= \Ext_{\q(n)}^1\big(L\big(t,\tilde{\lambda}\big),L(t,\tilde{\mu})\big)\\
&= \Ext_{\q(1)\oplus\q(n-1)}^1\big(L(t)\boxtimes L\big(\tilde{\lambda}\big),L(t)\boxtimes L(\tilde{\mu})\big)\\
&=\Ext_{\q(n-1)}^1\big(L\big(\tilde{\lambda}\big),L(\tilde{\mu})\big)\oplus \Ext_{\q(n-1)}^1\big(L\big(\tilde{\lambda}\big),\Pi L(\tilde{\mu})\big),
\end{align*}
where $\tilde{\lambda} = (\lambda_1-1,\dots,\lambda_k-1,0,\dots,0,1-\lambda_k,\dots,1-\lambda_1)$ and $\tilde{\mu}=(\mu_1-1,\dots,\mu_{k'}-1,0,\dots,0,\allowbreak 1-\mu_{k'},\dots,1-\mu_1)$. If $k=0$, then $\lambda = (1,0,\dots,0)$ and $\tilde{\lambda}=0$, and similarly for $k'=0$.
\end{cor}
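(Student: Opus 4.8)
The plan is to chain together three equalities, each justified by a result already in the excerpt. First, since $\lambda$ and $\mu$ both lie in the standard block $\mathcal F^n_{(1,0,\dots,0)}$, I would apply a translation-functor argument to replace the leading mark $1$ by a large $t$: by Theorem~\ref{blocks-central characters} the weights $(t,\tilde\lambda+(t-1,\dots))$ still form a block, and for $t\gg\max\{\lambda_1,\mu_1\}$ the resulting weights $(t,\tilde\lambda)$ (with appropriate conventions, this is $\lambda$ shifted into a $t$-admissible weight) are $\mathfrak p$-typical. The translation to a block with a large first mark is an equivalence on $\Ext^1$, which gives the first equality $\Ext^1_{\q(n)}(L(\lambda),L(\mu))=\Ext^1_{\q(n)}(L(t,\tilde\lambda),L(t,\tilde\mu))$. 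I should be careful here: strictly speaking it is cleaner to take $\lambda,\mu$ already $t$-admissible (this is what ``for $t\gg\max\{\lambda_1,\mu_1\}$'' is doing), and to note that the equivalence of categories $\mathcal F^n_{(1,0,\dots,0)}\simeq\mathcal F^n_{(t,0,\dots,0)}$ from Theorem~\ref{Ser-blocks}-type reasoning (or a direct translation functor) is parity-preserving, so it carries $\Ext^1$ isomorphically.

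The second equality is the heart of the argument and uses Proposition~\ref{tadmissible}. Since $(t,\tilde\lambda)$ and $(t,\tilde\mu)$ are $t$-admissible, the corresponding simples lie in $\mathcal F^n(t)$, and the functors $\Gamma^t$, $R^t$ give an equivalence $\mathcal F_{\mathfrak l}(t)\simeq\mathcal F^n(t)$ with $\mathfrak l=\q(1)\oplus\q(n-1)$. Under this equivalence $L(t,\tilde\lambda)=\Gamma^t(L_{\q(1)}(t)\boxtimes L_{\q(n-1)}(\tilde\lambda))$, and likewise for $\mu$. An equivalence of abelian categories preserves $\Ext^1$ groups, so $\Ext^1_{\q(n)}(L(t,\tilde\lambda),L(t,\tilde\mu))\cong\Ext^1_{\mathfrak l}(L(t)\boxtimes L(\tilde\lambda),L(t)\boxtimes L(\tilde\mu))$. (Here I record the remark preceding Corollary~\ref{Standard block}, that every object of $\mathcal F_{\mathfrak l}(t)$ has the form $L_{\q(1)}(t)\boxtimes M$, which ensures the decomposition on the $\mathfrak l$-side lives in the relevant subcategory.) One technical point to verify is that the equivalence is again parity-preserving so that the $\Ext^1$ into $L(\tilde\mu)$ versus $\Pi L(\tilde\mu)$ match up correctly; this is clear since $\Gamma^t$ commutes with $\Pi$.

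The third equality is a Künneth-type computation for the direct sum of Lie superalgebras $\mathfrak l=\q(1)\oplus\q(n-1)$, using the relative-cohomology description $\Ext^1_{\mathfrak l}(\,-,-)\cong H^1(\mathfrak l,\mathfrak l_{\bar 0};(-)^*\otimes(-))$ from Section~\ref{section3.1} together with Lemma~\ref{eqcohomology}. Applying that lemma with $A=\q(1)$, $B=\q(n-1)$, $M_A=L(t)^*\otimes L(t)$, $M_B=L(\tilde\lambda)^*\otimes L(\tilde\mu)$, and using that $L(t)$ is a typical $\q(1)$-module so $H^0(\q(1),\q(1)_{\bar 0};L(t)^*\otimes L(t))=\C$, $H^1=0$, and $H^0$ or $H^1$ of $L(t)^*\otimes\Pi L(t)$ behave dually (by Theorem~\ref{Ext C,C} applied to $\q(1)$, or directly since $\q(1)_{\bar 1}$ is one-dimensional), exactly two of the four summands in Lemma~\ref{eqcohomology} survive, namely $H^0(\q(1);\dots)\boxtimes H^1(\q(n-1);L(\tilde\lambda)^*\otimes L(\tilde\mu))$ and the $\Pi$-twisted one $H^1(\q(1);\dots)\boxtimes H^1(\dots)$ — wait, more precisely one checks the surviving terms give $\Ext^1_{\q(n-1)}(L(\tilde\lambda),L(\tilde\mu))\oplus\Ext^1_{\q(n-1)}(L(\tilde\lambda),\Pi L(\tilde\mu))$. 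The main obstacle I anticipate is bookkeeping the parity signs and the $\q(1)$-cohomology contributions correctly so that precisely these two summands (and not, say, a copy of $\Hom$) appear; everything else is a formal consequence of results already established.
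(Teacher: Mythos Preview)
Your proposal is correct and follows essentially the same three-step approach as the paper: the first equality comes from the equivalence of blocks $\mathcal F^n_{(1,0,\dots,0)}\simeq\mathcal F^n_{(t,0,\dots,0)}$ via translation functors (the paper invokes \cite[Lemma~5.12]{Ser-ICM} explicitly here, which also pins down that $L(\lambda)\mapsto L(t,\tilde\lambda)$), the second from Proposition~\ref{tadmissible}, and the third from Lemma~\ref{eqcohomology}. Your hesitation in the K\"unneth step is unnecessary: since $t\neq 0$ the $\q(1)$-module $L(t)$ is projective (typical), so both $H^1$ terms on the $\q(1)$ side vanish while both $H^0$ terms are $\C$ (as $L(t)$ is of type~Q, hence $L(t)\cong\Pi L(t)$), leaving exactly the two $\Ext^1_{\q(n-1)}$ summands claimed.
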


\begin{proof}
 The first equality follows from \cite[Lemma~5.12]{Ser-ICM}, which shows there is an equivalence of categories between $\mathcal{F}^n_{(1,0,\dots,0)}$ (``standard block'') and $\mathcal{F}^n_{(t,0,\dots,0)}$ given by a
 composition of translation functors. Under this equivalence, $L(\lambda)$ maps to $L\big(t,\tilde\lambda\big)$. The second equality follows from Proposition~\ref{tadmissible}. The last equality follows from Lemma~\ref{eqcohomology}.
\end{proof}

Now, using the $\q(2)$ Ext quiver in \cite[Theorem~27]{Maz}, this corollary computes all extensions occurring in Theorem~\ref{MainTheorem}(4).

\begin{Remark}\label{Shapiro} In the standard block, the $\sq(3)$ extensions are the same as the $\q(3)$ extensions. Indeed, lemma \ref{Ind-Res} shows if $\lambda = (a,1,-a)$, $\mu = (b,1,-b)$, then $\Res L(\lambda) = L_{\sq}(\lambda)\oplus \Pi L_{\sq}(\lambda)$ and $\Ind L_{\sq}(\lambda)=L(\lambda)$. Now Shapiro's lemma implies
\begin{gather*} \Ext^1_{\q}(\Ind L_{\sq}(\lambda), L(\mu)) = \Ext^1_{\sq}(L_{\sq}(\lambda), L_{\sq}(\mu)\oplus \Pi L_{\sq}(\mu)),\qquad \text{and}\\
\Ext^1_{\q}(\Ind \Pi L_{\sq}(\lambda), L(\mu)) = \Ext^1_{\sq}(\Pi L_{\sq}(\lambda), L_{\sq}(\mu)\oplus \Pi L_{\sq}(\mu)).
\end{gather*}
Since $L(\lambda)=\Pi L(\lambda)$, we have $\Ext^1_{\q}(L(\lambda),L(\mu)) = \Ext^1_{\sq}(L_{\sq}(\lambda),L_{\sq}(\mu))$. Also $\Res L(1,0,0) = L_{\sq}(1,0,0)$, hence $\Ind L_{\sq}(\lambda) = \Ind\Pi L_{\sq}(\lambda)$ implies by Shapiro's lemma \[\Ext^1_{\q}(L(\lambda),L(1,0,0))=\Ext^1_{\sq}(L_{\sq}(\lambda),L_{\sq}(1,0,0))=\Ext^1_{\sq}(\Pi L_{\sq}(\lambda),L_{\sq}(1,0,0)).\]
\end{Remark}

\subsection{Relations}\label{section5.3}

\subsubsection[Relations for g=sq(3)]{Relations for $\boldsymbol{\g=\sq(3)}$}\label{section5.3.1}
All irreducible modules and projective covers considered here will be for $\sq(3)$, and we will omit the subscripts from $L_{\sq}(\lambda)$ and $P_{\sq}(\lambda)$. If $\lambda\neq(1,0,0)$, $\mu=(a,1,-a),a>1$, then $[P(\lambda):\mathcal{E}(\mu)]=[\mathcal{E}(\mu):L(\lambda)]$ by Theorem~\ref{BGG Reciprocity}. We note $L_{\sq}(a,1,-a)\neq\Pi L_{\sq}(a,1,-a)$. If $\lambda=(1,0,0)$, then $[P(\lambda):\mathcal{E}(\mu)]=2[\mathcal{E}(\mu):L(\lambda)]$.
Now, the character formula for $L(\lambda):\lambda=(\lambda_1,\lambda_2,\lambda_3)\in\Lambda^+$ is shown in \cite{PS1} to equal the generic character formula for all $\lambda\neq (1,0,0)$. This combined with character formula for $\mathcal{E}(\lambda)$ (\ref{Euler Characteristic}) implies
\begin{gather*}
\mathcal{E}(1,0,0) = 0; \hspace{.2cm} \mathcal{E}(2,1,-2) = [L(1,0,0)]+[L(2,1,-2)],\\
 \mathcal{E}(a,1,-a) = [L(a,1,-a)]+[L(a-1,1,-a+1)].
\end{gather*}

Thus, using (\ref{Projective multiplicity}), we find
\begin{gather*}
[P(1,0,0)] =2[L(1,0,0)]+2[ L(2,1,-2)],\\
[P(2,1,-2)] = [L(1,0,0)]+2[L(2,1,-2)]+[L(3,1,-3)],\\
[P(a,1,-a)] = [L(a-1,1,1-a)]+2[L(a,1,-a)]+[L(a+1,1,-a-1)]\qquad \text{for} \quad a>2.
\end{gather*}
This forces the radical filtrations for $P(\lambda)$ to be as shown in Appendix~\ref{appendixA}, since we know all possible extensions of simples, and hence $\operatorname{rad}P(\lambda)/\operatorname{rad}^2P(\lambda)$.

Let $V\in\mathcal{F}$ have radical filtration $V=\operatorname{rad}^0(V)\supset\operatorname{rad}^1 (V)\supset\cdots\supset \operatorname{rad}^k(V)=0$. Let $\operatorname{rad}_i=\operatorname{rad}^i/\operatorname{rad}^{i+1}$ and denote the radical filtration by
\[\operatorname{rad}_0V\big| \operatorname{rad}_1V\big|\cdots\big|\operatorname{rad}_{k-1}.\]
Let
\begin{gather*} a_1\in \Hom_{\mathbf{C}Q}(L(1,0,0),L(2,1,-2)) , \\ b_1\in \Hom_{\mathbf{C}Q}(L(2,1,-2)L(1,0,0)),\\ a_t\in\Hom_{\mathbf{C}Q}(L(t,1,-t), L(t+1,1,-t-1)),\\ b_t\in\Hom_{\mathbf{C}Q}(L(t+1,1,-t-1),L(t,1,-t))
\end{gather*} be paths on the quiver. We identify each $\gamma\in\Hom_{\mathbf{C}Q}(L(\lambda),L(\mu))$ with a corresponding element of $\Hom_{\g}(P(\lambda),\operatorname{rad}P(\mu)/\operatorname{rad}^2P(\mu))$ as in Proposition~\ref{relations}. Then using the radical filtrations in Section~\ref{section6.1},
\begin{gather*}
\begin{split}&
\im(a_1b_1) = \im(a_1)(L(2,1,-2)\big| L(1,0,0)) = L(2,1,-2)\qquad \text{and}\\
&\im(b_2a_2) = \im(b_2)(L(2,1,-2)\big| L(3,1,-3))=L(2,1,-2)
\end{split}
 \end{gather*}
 and hence $a_1b_1=b_2a_2$. Likewise we find $a_2a_1=b_1b_2=0$, $a_{t+1}a_t=b_tb_{t+1}=0$ and $b_ta_t=a_{t-1}b_{t-1}$ for $t\geq 3$. The computation for the $a_t'\in\Hom_Q(\Pi L(t+1,1,-t-1),\Pi L(t,1,1))$ and $b_t'\in\Hom_{Q}(\Pi L(t,1,-t),\Pi L(t+1,1,-t-1))$ is identical.

 \subsubsection{Translation functor from the principal to the standard block}\label{section5.3.2}

 Consider the translation functors:
\[ TV = \operatorname{pr}_1(V\otimes L(1,0,0))\qquad \text{and} \qquad T^*W = \operatorname{pr}_2(W\otimes L(0,0,-1)),\]
where $\operatorname{pr}_1$ is projection to standard block and $\operatorname{pr}_2$ is projection to principal block. It is well known $T$, $T^*$ are both exact and left and right adjoint to each other.

\begin{Lemma}\label{translations} We have $TL_\q(1,0,-1)=0$ and $TL_\q(a,0,-a)\cong L_\q(a,1,-a)$ for $a\geq 2$.
\end{Lemma}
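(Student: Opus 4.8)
The strategy is to compute the translation functor $T$ on simple modules in the principal block of $\q(3)$ by first understanding what $T$ does on the level of Euler characteristics (equivalently, on Verma-type modules), and then invoke the classification of the principal block's structure. First I would pass to the category $\mathcal F^3_{(1,0,0)}$ (the standard block) and use that $T$ is exact, so it takes short exact sequences to short exact sequences, and it commutes with $\Pi$. The essential input is a character computation: since $T(-) = \operatorname{pr}_1(-\otimes L(1,0,0))$ and the central character of a tensor product is controlled by $\operatorname{wt}$, I would track the weight of $L_\q(a,0,-a)\otimes L(1,0,0)$ and verify that the only dominant weight $\nu$ in the standard block with $\operatorname{wt}(\nu)$ matching a summand of $\operatorname{wt}(a,0,-a)+\operatorname{wt}(1,0,0)$ and satisfying $\nu > $ nothing obstructing it is $\nu=(a,1,-a)$ for $a\ge 2$, while for $a=1$ no dominant weight in the standard block appears.

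Concretely, I would argue as follows. For $a\ge 2$, the weight $(a,0,-a)$ is $\mathfrak p$-typical in the sense of Section~\ref{section5.1} (with the parabolic whose Levi is $\q(1)\oplus\q(2)$), or rather one reduces via Corollary~\ref{Standard block}: $T$ intertwines the equivalences of Proposition~\ref{tadmissible} with the tensoring operation on $\q(n-1)=\q(2)$, where the corresponding statement is the known $\q(2)$ computation from \cite{Maz}. Alternatively and more directly, I would compute $\operatorname{Ch}(L_\q(a,0,-a)\otimes L(1,0,0))$, project onto the standard block, and compare with $\operatorname{Ch}(L_\q(a,1,-a))$ using the generic character formula (valid for all weights in the principal block with $a\ge 1$ and for $(a,1,-a)$ with $a\ge 2$, as recalled in Section~\ref{section5.3.1}); the two characters will agree, and since $L_\q(a,1,-a)$ is the only simple with that character in the block, exactness of $T$ together with the fact that $T$ sends the highest weight vector to a highest weight vector of weight $(a,1,-a)$ forces $TL_\q(a,0,-a)\cong L_\q(a,1,-a)$. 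For $a=1$: the weight $(1,0,-1)\otimes (1,0,0)$ has no constituent of weight lying in $\Lambda^+\cap$ (standard block) — equivalently $\operatorname{pr}_1$ of this tensor product is zero on characters — so $TL_\q(1,0,-1)=0$.

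The main obstacle I anticipate is the $a=1$ case and, relatedly, making the $a\ge 2$ identification airtight rather than merely matching characters: a priori $TL_\q(a,0,-a)$ could be a nonsplit extension or have extra composition factors, so one must rule out that $\operatorname{pr}_1(L_\q(a,0,-a)\otimes L(1,0,0))$ has length $>1$. I would handle this by a weight-counting/multiplicity bound: every composition factor $L(\nu)$ of the projection must satisfy $\operatorname{wt}(\nu)$ compatible with $\operatorname{wt}(a,0,-a)$ and with the three weights of $L(1,0,0)$ (which has character $e^{(1,0,0)}+e^{(0,1,0)}+e^{(0,0,1)}$ up to the Clifford multiplicity), and in the standard block the only such $\nu$ is $(a,1,-a)$; then the character equality pins the multiplicity to exactly one, and semisimplicity is automatic. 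For $a=1$ the same weight bookkeeping yields the empty set of admissible $\nu$, giving $TL_\q(1,0,-1)=0$. Since $T$ is exact and the principal block has enough structure already recorded, no further delicate homological input is needed beyond these character identities.
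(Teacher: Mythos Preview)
Your weight-bookkeeping argument has a genuine gap. You claim that for $a\geq 2$ the only dominant $\nu$ in the standard block that can occur as a composition factor of $\operatorname{pr}_1(L_\q(a,0,-a)\otimes L(1,0,0))$ is $(a,1,-a)$, and that for $a=1$ there is no such $\nu$. Both claims are false as stated. The dominant weights in the standard block are $(1,0,0)$ and $(b,1,-b)$ for $b\geq 2$, and one checks that $(1,0,0)\leq (a+1,0,-a)$ and $(b,1,-b)\leq (a+1,0,-a)$ for every $2\leq b\leq a$; so by highest-weight considerations alone, $L(1,0,0)$ and all $L(b,1,-b)$ with $b\leq a$ are a priori admissible constituents of $TL_\q(a,0,-a)$. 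In particular, for $a=1$ the weight $(1,0,0)$ does lie in $\Lambda^+$ in the standard block (it arises e.g.\ as $(1,0,-1)+(0,0,1)$), so your assertion that ``$\operatorname{pr}_1$ of this tensor product is zero on characters'' is not a triviality and cannot be read off from the list of weights of $L(1,0,0)$. A full character computation might still salvage the argument, but you have not carried it out, and the shortcut you describe does not work.

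The paper avoids this difficulty by arguing differently in the two cases. For $a=1$ it first observes (as you would) that any simple constituent of $TL_\q(1,0,-1)$ must be $L_\q(1,0,0)$, and then kills the multiplicity via adjunction:
\[
\Hom_\q\bigl(L_\q(1,0,0),\,TL_\q(1,0,-1)\bigr)\;=\;\Hom_\q\bigl(T^*L_\q(1,0,0),\,L_\q(1,0,-1)\bigr)\;=\;0,
\]
which forces $TL_\q(1,0,-1)=0$ since its socle would otherwise contain $L_\q(1,0,0)$. For $a\geq 2$ the paper does not attempt to bound composition factors by weight comparison at all; instead it uses Proposition~\ref{induction functor properties}(2) to get
\[
T\,\Gamma_0(G/B,v(a,0,-a))\;=\;\Gamma_0(G/B,v(a,1,-a)),
\]
and then reads off $T(\operatorname{Top}\Gamma_0(G/B,v(a,0,-a)))=\operatorname{Top}\Gamma_0(G/B,v(a,1,-a))$ from the already-known multiplicities of simples in these $\Gamma_0$'s. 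This sidesteps the need to exclude the extra candidates $(1,0,0)$ and $(b,1,-b)$, $b<a$, one by one. Your suggested reduction via Corollary~\ref{Standard block} is also not applicable here: that corollary concerns translations \emph{within} (shifted) standard blocks, whereas $T$ goes from the principal block to the standard block and does not intertwine with the equivalence of Proposition~\ref{tadmissible}.
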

\begin{proof} For the first assertion we use
\[\Hom_\q((L_\q(1,0,0), TL_\q(1,0,-1))=\Hom_\q(T^*L_\q(1,0,0), L_\q(1,0,-1))=0.\]
 Since all simple constituents of $TL_\q(1,0,-1)$ are isomorphic to $L_\q(1,0,0)$ by the weight argument, the statement follows.

 For the second assertion, use Lemma \ref{induction functor properties}(2) to check that \[ T\Gamma_0(G/B,v(a,0,-a))=\Gamma_0(G/B,v(a,1,-a)).\]
 Information about the multiplicities of simple modules in
 \[ \Gamma_0(G/B,v(a,0,-a)) \qquad \text{and}\qquad \Gamma_0(G/B,v(a,1,-a))\]
 allows to conclude that
\[ T(\operatorname{Top}\Gamma_0(G/B,v(a,0,-a)))=\operatorname{Top}\Gamma_0(G/B,v(a,1,-a)).\tag*{\qed}
\]\renewcommand{\qed}{}
 \end{proof}

\subsubsection[Relations for g=q(3)]{Relations for $\boldsymbol{\g=\mathfrak{q}(3)}$}\label{section5.3.3} As before, we use BGG reciprocity to find
\begin{gather*}
[P(1,0,0)] =4[L(1,0,0)]+2[ L(2,1,-2)],\\
[P(2,1,-2)] = 2[L(1,0,0)]+2[L(2,1,-2)]+[L(3,1,-3)],\\
[P(a,1,-a)] = [L(a-1,1,1-a)]+2[L(a,1,-a)]+[L(a+1,1,-a-1)]\qquad \text{for}\quad a>2.
\end{gather*}

Recall functors $\Res$, $\Ind$ from Section~\ref{generalind}.
It is easy to verify the following isomorphisms
\[ P_{\q}(\lambda)\cong \Ind P_{\mathfrak{sq}}(\lambda), \qquad \Res P_{\q}(\lambda)\cong P_{\mathfrak{sq}}(\lambda)\oplus \Pi P_{\mathfrak{sq}}(\lambda).\]
Furthermore, we have an isomorphism $P_{\mathfrak{sq}}(\lambda)\cong \Pi P_{\mathfrak{sq}}(\lambda)$ only for $\lambda=(1,0,0)$.
Moreover, from relations for $\mathfrak{sq}(3)$ we know
\[\Hom_{\mathfrak{sq}}(P_{\mathfrak{sq}}(\lambda),\Pi P_{\mathfrak{sq}}(\mu))=0\]
if both $\lambda$ and $\mu$ are not equal to $(1,0,0)$. This implies
\[\Ind \colon \Hom_{\mathfrak{sq}}(P_{\mathfrak{sq}}(\lambda), P_{\mathfrak{sq}}(\mu))\to\Hom_{\mathfrak{q}}(P_{\mathfrak{q}}(\lambda),P_{\mathfrak{q}}(\mu))\]
is an isomorphism if $\lambda,\mu\neq (1,0,0)$. Therefore we have unique lift of the arrows $a_t$, $b_t$ to $\tilde a_t$, $\tilde b_t$ for $t\geq 2$ and the relations between them are the same as for $\mathfrak{sq}(3)$.
For the arrows $a_1\colon P_{\mathfrak {sq}}(1,0,0)\to P_{\mathfrak {sq}}(2,1,-2)$ and $b_1\colon P_{\mathfrak {sq}}(2,1,-2)\to P_{\mathfrak {sq}}(1,0,0)$ we define
\[\tilde a_1=\Ind (a_1+\Pi b_1),\qquad \tilde b_1=\Ind (b_1+\Pi a_1).\]
Then we immediately obtain $\tilde a_1\tilde b_1=0$ and $\tilde a_2\tilde a_1=\tilde b_1 \tilde b_2=0$ from the relations for $\mathfrak{sq}(3)$.
From multiplicities of simple modules in projectives and information about $\Ext^1_{\q}$, one can compute
the layers of the radical filtration in projective modules which are listed in Section~\ref{section6.2}.

There is one additional loop arrow $h\colon P_{\mathfrak {q}}(1,0,0)\to P_{\mathfrak {q}}(1,0,0)$.
Let us show that we can choose $h$ in such a way that $h^2=0$. We claim that there exists a $\q(3)$-module $R$ with radical filtration
\[L_{\q}(1,0,0)\,|\, L_{\q}(2,1,-2)\,|\, L_{\q}(1,0,0).\]
Indeed, it is proven independently in Section~\ref{section5} that there exists a $\q$-module $R'$ with the radical filtration
\[\Pi\C\,|\, L_{\q}(2,0,-2)\,|\, \C .\]
Set $R=TR'$ where $T$ is the translation functor from the principal to the standard block. Lemma~\ref{translations} ensures that the composition factors of~$R$ are as desired.
Furthermore, $R'$~is a~quotient of $P_\q(0)$ and hence~$R$ is a quotient of $P_\q(1,0,0)$. That settles the radical filtration of~$R$.

Now we can use an exact sequence
\[0\to R\to P_\q(1,0,0)\to R\to 0\]
to construct $h\in\End_{\q}(P_{\mathfrak {q}}(1,0,0))$ with image and kernel isomorphic to $R$. In fact from the radical filtration of~$R$ we know that $\Res R\cong P_{\mathfrak {sq}}(1,0,0)$.

Now we can use Lemma~\ref{general} with $M=R$, $P_\q(1,0,0)=\Ind R$ and $\theta=h$.
The algebra $\End_q(P_{\q}(1,0,0))$ is generated~$h$ and $u:=\tilde b_1\tilde a_1$ with $u^2=0$.
Lemma~\ref{general} implies $hu=uh$.

\subsection{``Half-standard'' block}\label{section5.4}

We can compute the Ext quiver for $\q(3)$ half-standard block by just computing the radical filtrations of the projective covers. The character formula for $L(\lambda)$ when $\text{\rm wt}(\lambda)=\delta_{\frac{3}{2}}$ is the same as the generic character formula. Hence, we find
\begin{gather*}
\mathcal{E}\big(\tfrac{3}{2},\tfrac{1}{2},-\tfrac{1}{2}\big) = \big[L\big(\tfrac{3}{2},\tfrac{1}{2},-\tfrac{1}{2}\big)\big],\qquad \mathcal{E}\big(\tfrac{5}{2},\tfrac{3}{2},-\tfrac{5}{2}\big) = \big[L\big(\tfrac{5}{2},\tfrac{3}{2},-\tfrac{5}{2}\big)\big]+\big[L\big(\tfrac{3}{2},\tfrac{1}{2},-\tfrac{1}{2}\big)\big],\\
 \mathcal{E}\big(\tfrac{2a+1}{2}, \tfrac{3}{2},\tfrac{-2a-1}{2}\big)=\big[L\big(\tfrac{2a+1}{2}, \tfrac{3}{2},\tfrac{-2a-1}{2}\big)\big]+\big[L\big(\tfrac{2a-1}{2}, \tfrac{3}{2},\tfrac{-2a+1}{2}\big)\big], \qquad \text{for} \quad a>2.
\end{gather*}

Now by our BGG reciprocity result, $[P(\lambda):\mathcal{E}(\mu)] = [\mathcal{E}(\mu):L(\lambda)]$, hence
\begin{gather*}
\big[P\big(\tfrac{3}{2},\tfrac{1}{2},-\tfrac{1}{2}\big)\big] = 2\big[L\big( \tfrac{3}{2},\tfrac{1}{2},-\tfrac{1}{2}\big)\big]+\big[L\big(\tfrac{5}{2},\tfrac{3}{2},-\tfrac{5}{2}\big)\big],\\
\big[P\big(\tfrac{5}{2},\tfrac{3}{2},-\tfrac{5}{2}\big)\big] = \big[L\big(\tfrac{3}{2},\tfrac{1}{2},-\tfrac{1}{2}\big)\big] + 2\big[L\big(\tfrac{5}{2},\tfrac{3}{2},-\tfrac{5}{2}\big)\big] + \big[L\big(\tfrac{7}{2},\tfrac{3}{2},-\tfrac{7}{2}\big)\big],\\
\big[P\big(\tfrac{2a+1}{2},\tfrac{3}{2},-\tfrac{2a+1}{2}\big)\big] = \big[L\big(\tfrac{2a-1}{2},\tfrac{3}{2},-\tfrac{2a-1}{2}\big)\big] + 2\big[L\big(\tfrac{2a+1}{2},\tfrac{3}{2},-\tfrac{2a+1}{2}\big)\big] \\
\hphantom{\big[P\big(\tfrac{2a+1}{2},\tfrac{3}{2},-\tfrac{2a+1}{2}\big)\big] = }{} + \big[L\big(\tfrac{2a+3}{2},\tfrac{3}{2},-\tfrac{2a+3}{2}\big)\big],\qquad a>2.
\end{gather*}
Using these composition factors and that there are no self extensions, we find the radical filtration for $P\big(\tfrac{3}{2},\tfrac{1}{2},-\tfrac{1}{2}\big)$. Then using $\Ext^1_{\q(3)}(L(\lambda),L(\mu))=\Ext^1_{\q(3)}(L(\mu),L(\lambda))$ when $\text{\rm wt}(\lambda)=\text{\rm wt}(\mu)\allowbreak =\delta_{\frac{3}{2}}$, we inductively (on $a$) find the radical filtrations for $P\big(\tfrac{2a+1}{2},\tfrac{3}{2},-\tfrac{2a+1}{2}\big)$. This computes the possible extensions, and moreover determines the relations as written in the theorem. Finally, the~$\mathfrak{sq}(3)$ half-standard block follows from Shapiro's lemma (see Remark~\ref{Shapiro}).

\begin{Remark} It was shown in \cite{BD} that half-integral blocks in $\mathcal{F}_{\q(n)}$ of atypicality~$r$ are equivalent to modules over the Khovanov are algebra $ K_r^{+\infty}$. In our case, $r=1$ and the arc algebra is the zig-zag algebra of the semi-infinite linear quiver.
\end{Remark}

\section{Principal block}\label{section6}
\subsection[Ext quiver for the principal block for sq(3)]{Ext quiver for the principal block for $\boldsymbol{\mathfrak{sq}(3)}$}\label{section6.1}

In this subsection we use the notation
\[\g=\mathfrak{sq}(3),\qquad L(a)=L_{\mathfrak{sq}(3)}(a,0,-a),\qquad P(a)=P_{\mathfrak{sq}(3)}(a,0,-a).\]
We start with the following
\begin{Lemma}\label{symmetrysq} If $a>0$ then $L(a)^*\cong \Pi L(a)$ and $P(a)^*\cong \Pi P(a)$. Furthermore, $P(0)\cong P(0)^*$.
\end{Lemma}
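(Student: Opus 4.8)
The plan is to combine the contravariant duality $M\mapsto M^{*}$ on $\mathcal F=\mathcal F_{\mathfrak{sq}(3)}$ with the fact that in $\mathcal F$ projective objects are injective. Recall $M\mapsto M^{*}$ is an exact contravariant auto-equivalence with $M^{**}\cong M$; it sends simple objects to simple objects and preserves blocks, since the $\h_{\bar 0}^{*}$-weights of $M^{*}$ are the negatives of those of $M$. In particular $L(a)^{*}$ is a simple module whose highest weight is $-w_{0}(a,0,-a)=(a,0,-a)$, so $L(a)^{*}\cong L(a)$ or $L(a)^{*}\cong\Pi L(a)$. For $a=0$ the module $L(0)=\mathbf C$ is trivial, hence self-dual, so $L(0)^{*}\cong L(0)$. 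For $a>0$ the module $L(a)$ is of type $M$: by Lemma~\ref{dim Clifford}(a) the space $E'_{(a,0,-a)}$ is $2$-dimensional, so $\operatorname{Cliff}'(a,0,-a)\cong\operatorname{Cliff}(2)$ has two non-isomorphic simples swapped by $\Pi$. A short computation in $\operatorname{Cliff}(2)$ shows that the naive dual of one simple $\operatorname{Cliff}(2)$-module is the other; applying this to the highest-weight space of $L(a)$ — on which the odd Cartan generators act and $S(\bar H_{i})=-\bar H_{i}$ — gives $L(a)^{*}\cong\Pi L(a)$. (Alternatively this can be transported from $\mathfrak q(3)$ via $\operatorname{Res}$ and Corollary~\ref{Ind-Res}(a), or extracted from the known duality on $\mathfrak q(n)$.)

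For the projective statements I will use that every projective object of $\mathcal F$ is injective. Indeed $\operatorname{Ind}_{\g_{\bar 0}}^{\g}$ produces projectives of $\mathcal F$ and $\operatorname{Coind}_{\g_{\bar 0}}^{\g}$ produces injectives, and $\operatorname{Ind}_{\g_{\bar 0}}^{\g}N\cong\operatorname{Coind}_{\g_{\bar 0}}^{\g}\big(N\otimes\Lambda^{\mathrm{top}}\g_{\bar 1}\big)$; for $\g=\mathfrak{sq}(3)$ one has $\g_{\bar 1}\cong\mathfrak{sl}_{3}$ as a $\g_{\bar 0}\cong\mathfrak{gl}_{3}$-module, an $8$-dimensional module on which every element of $\mathfrak{gl}_{3}$ acts with trace $0$, so $\Lambda^{\mathrm{top}}\g_{\bar 1}$ is the even trivial module and $\operatorname{Ind}_{\g_{\bar 0}}^{\g}\cong\operatorname{Coind}_{\g_{\bar 0}}^{\g}$; hence projectives coincide with injectives. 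Therefore $P(a)^{*}$ is again an indecomposable projective, and therefore injective, object, and dualizing the cover $P(a)\twoheadrightarrow L(a)$ produces an embedding $L(a)^{*}\hookrightarrow P(a)^{*}$. Since $P(a)^{*}$ is injective its socle is simple, so $\operatorname{soc}P(a)^{*}\cong L(a)^{*}$ and $P(a)^{*}$ is the injective hull of $L(a)^{*}$.

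It remains to identify this injective hull with $\Pi^{\epsilon}P(a)$. Because $P(a)$ is itself an indecomposable injective, it is the injective hull of its socle, and by uniqueness of injective hulls $P(a)^{*}\cong\Pi^{\epsilon}P(a)$ as soon as one knows $\operatorname{soc}P(a)\cong L(a)$ — equivalently, that the principal block of $\mathfrak{sq}(3)$ is weakly symmetric. Granting this, $\operatorname{soc}P(a)^{*}\cong L(a)^{*}\cong\Pi L(a)\cong\Pi\operatorname{soc}P(a)$ for $a>0$ forces $P(a)^{*}\cong\Pi P(a)$, while $\operatorname{soc}P(0)^{*}\cong L(0)\cong\operatorname{soc}P(0)$ forces $P(0)^{*}\cong P(0)$. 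I expect the equality $\operatorname{soc}P(a)\cong L(a)$ to be the main obstacle, as it is not formal. The cleanest way to obtain it is to compute the multiplicities $[P(a):L(b)]$ from Theorem~\ref{BGG Reciprocity} together with the character formulas of \cite{PS1}: each $\operatorname{Ch}\mathcal E(\mu)$ for $\mu$ in the principal block is $(-w_{0})$-invariant (Lemma~\ref{Euler Characteristic}(1), using $\varepsilon(w_{0})=-1$ and $D^{\vee}=-D$ for $n=3$, and $-w_{0}\mu=\mu$), so $[P(a)^{*}]=[P(a)]$ in $\mathcal K^{\Pi}(\mathcal F)$ by Theorem~\ref{BGG Reciprocity}; since the resulting composition multiplicities show that an indecomposable projective of this block is determined by its class in $\mathcal K^{\Pi}(\mathcal F)$, this yields $\operatorname{soc}P(a)\cong L(a)$ and finishes the proof.
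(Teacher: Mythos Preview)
Your treatment of $L(a)^{*}$ is fine and matches the paper's: both reduce to identifying the highest-weight $\h'$-module of the dual, and both find $v(-a,0,a)^{*}\cong\Pi v(a,0,-a)$ for $a>0$.

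For the projective part, you have exactly the right ingredient but draw too weak a conclusion from it, and the detour you take to compensate does not close the gap. From the isomorphism $\operatorname{Ind}_{\g_{\bar 0}}^{\g}N\cong\operatorname{Coind}_{\g_{\bar 0}}^{\g}(N\otimes\Lambda^{\mathrm{top}}\g_{\bar 1})$ together with the fact that $\Lambda^{\mathrm{top}}\g_{\bar 1}$ is the \emph{even} trivial module, one gets more than ``projectives coincide with injectives'': one gets the symmetric-algebra statement $I(L)\cong P(L)$ for every simple $L$ (equivalently $\operatorname{soc}P(L)\cong L$). This is precisely what the paper uses, in the form $I(L)\cong P(L)\otimes S^{\mathrm{top}}(\g_{\bar 1})$ from~\cite{Ser1}, and it immediately yields $P(L(a))^{*}\cong I(L(a)^{*})\cong P(L(a)^{*})$, hence $\Pi P(a)$ for $a>0$ and $P(0)$ for $a=0$. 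You instead only extract the Frobenius property and then must independently prove $\operatorname{soc}P(a)\cong L(a)$.

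Your proposed proof of that last point does not work. Knowing $[P(a)^{*}]=[P(a)]$ in $\mathcal{K}^{\Pi}(\mathcal F)$ and that indecomposable projectives in the block are determined by their class in $\mathcal{K}^{\Pi}$ tells you only that $P(a)^{*}\cong P(a)$ or $P(a)^{*}\cong\Pi P(a)$; it cannot distinguish the two, since $\mathcal{K}^{\Pi}$ identifies $[M]$ with $[\Pi M]$ by definition. Correspondingly it cannot decide whether $\operatorname{soc}P(a)$ is $L(a)$ or $\Pi L(a)$: the case $P(a)^{*}\cong P(a)$ with $\operatorname{soc}P(a)\cong\Pi L(a)$ is fully consistent with all the multiplicity data you invoke. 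So the sentence ``this yields $\operatorname{soc}P(a)\cong L(a)$'' is unjustified. The fix is simply to strengthen your conclusion from the computation of $\Lambda^{\mathrm{top}}\g_{\bar 1}$ to the identity of the Nakayama twist, i.e.\ $I(L)\cong P(L)$, exactly as the paper does; no appeal to BGG reciprocity or composition multiplicities is needed.
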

\begin{proof} If the highest weight $\h$-module of $L(a)$ is $v(a,0,-a)$ then the highest weight $\h$-module of $L(a)^*$ is $v(-a,0,a)^*$~\cite{Fri}. The first assertion follows from the isomorphism
 $v(-a,0,a)^*\cong \Pi v(a,0,-a)$ of $\h$-modules when $a>0$. If $I(L),P(L)$ denote the injective, projective hull of a simple $\g$-mod $L$ respectively, then by~\cite{Ser1}
\[I(L)\cong P(L)\otimes T,\]
 where $T\cong S^{\rm top}(\g_{\bar 1})$. In our case $S^{\rm top}(\g_{\bar 1})=S^8(\g_{\bar 1})$ is the trivial $\g_{\bar 0}$-module. Hence we have $I(L)\cong P(L)$. On the other hand $P(L)^*\cong I(L^*)$. Hence if $a>0$, $P(L(a))^*\cong P(L(a)^*)\cong\Pi P(L(a))$. If $a=0$, then $L(0)^*= L(0)$ and consequently $P(L(0))^*\cong P(L(0)^*)\cong P(L(0))$.
\end{proof}

\begin{cor}\label{extsymmetry} If $a,b>0$, then
\[ \Ext^1_\g(L(a),L(b))\cong \Ext^1_\g(L(b),L(a)),\qquad \Ext^1_\g(L(a),L(0))\cong \Ext^1_\g(\Pi L(0),L(a)).\]
 \end{cor}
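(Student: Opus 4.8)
The plan is to obtain both isomorphisms formally from Lemma~\ref{symmetrysq} together with the two standard (anti)equivalences of $\mathcal F$: the contravariant duality $V\mapsto V^*$ and the parity change functor $\Pi$. First I would record two elementary facts. The functor $V\mapsto V^*$ (built from the antipode of $U(\g)$) is an exact contravariant autoequivalence of $\mathcal F$: it preserves finite dimensionality and semisimplicity over $\g_{\bar0}$ and sends parity-preserving maps to parity-preserving maps. Applying it to a short exact sequence $0\to N\to E\to M\to 0$ produces $0\to M^*\to E^*\to N^*\to 0$, so it induces an isomorphism $\Ext^1_\g(M,N)\cong\Ext^1_\g(N^*,M^*)$ for all $M,N\in\mathcal F$. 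Similarly $\Pi$ is an exact covariant autoequivalence with $\Pi^2=\mathrm{id}$, hence $\Ext^1_\g(M,N)\cong\Ext^1_\g(\Pi M,\Pi N)$.

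With these in hand the corollary is immediate. For $a,b>0$, Lemma~\ref{symmetrysq} gives $L(a)^*\cong\Pi L(a)$ and $L(b)^*\cong\Pi L(b)$, so
\[\Ext^1_\g(L(a),L(b))\cong\Ext^1_\g(L(b)^*,L(a)^*)\cong\Ext^1_\g(\Pi L(b),\Pi L(a))\cong\Ext^1_\g(L(b),L(a)),\]
proving the first isomorphism. For the second, $L(0)=L(0,0,0)$ is the trivial module, so $L(0)^*\cong L(0)$ (as also noted in the proof of Lemma~\ref{symmetrysq}), while $L(a)^*\cong\Pi L(a)$; therefore
\[\Ext^1_\g(L(a),L(0))\cong\Ext^1_\g(L(0)^*,L(a)^*)\cong\Ext^1_\g(L(0),\Pi L(a))\cong\Ext^1_\g(\Pi L(0),L(a)),\]
where in the last step I applied $\Pi$ and used $\Pi^2 L(a)=L(a)$.

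There is no real obstacle here: the content is entirely carried by Lemma~\ref{symmetrysq}, and what remains is bookkeeping with $\Ext^1$ under the duality and parity functors. The only minor point worth a sentence is the verification that $V\mapsto V^*$ genuinely lands in $\mathcal F$ and is exact, after which the chains of isomorphisms above are purely formal. (One could instead argue through the identification $\Ext^p_\g(M,N)\cong\operatorname H^p(\g,\g_{\bar0};M^*\otimes N)$ and the isomorphism $M^*\otimes N\cong(N^*\otimes M)^*$, but the functorial argument is shorter.)
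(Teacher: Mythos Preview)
Your argument is correct and is exactly the intended one: the paper states the corollary without proof because it follows immediately from Lemma~\ref{symmetrysq} via the exact contravariant duality $V\mapsto V^*$ and the parity-change autoequivalence~$\Pi$, precisely as you spell out.
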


Note, in the case $\g=\mathfrak{sq}(3)$ and $\mu=(a,0,-a),a>0$, we have $[P(\lambda):\mathcal{E}(\mu)]=[\mathcal{E}(\mu):L(\lambda)]$ by Theorem \ref{BGG Reciprocity}. Now, using the character formula for $L(a,0,-a)$ \cite{PS2} and $\mathcal{E}(a,0,-a)$~(\ref{Euler Characteristic}), we find
\begin{gather*} \mathcal{E}(0) = 0,\qquad \mathcal{E}(1,0,-1) = [L(1)],\qquad \mathcal{E}(2,0,-2) = [L(2)]+[L(1)]+2[\C],\\
 \mathcal{E}(a,0,-a)=[L(a)]+[L(a-1)]\qquad \text{if} \quad a>2.
\end{gather*}

Thus, using \ref{Projective multiplicity}, we find
\begin{gather*}
[P(0)] = 4[\C]+2[L(1)]+2[ L(2)],\\
[P(1)] =2[\C]+2[L(1)]+[L(2)],\\
[P(2)] = 2[\C]+[L(1)]+2[L(2)]+[L(3)],\\
[P(a)] = [L(a-1)]+2[L(a)]+[L(a+1)]\qquad \text{for} \quad a>2.
\end{gather*}

Furthermore, it follows from \cite{PS2} that $\Gamma_i(G/B,v(a,0,-a))=0$ if $a\geq 2$ and $i\geq 1$.
Therefore if $a\geq 3$, we obtain a non-split exact sequence
\[ 0\to L(a-1)\to \Gamma_0(G/B,v(a,0,-a))\to L(a)\to 0,\]
which gives a non-trivial extension $\Ext^1_\g(L(a),L(a-1))$. There are no more by Proposition~\ref{m^0 greater than Ext}.
\begin{Lemma}\label{generalext} Let $a\geq 3$, then
\begin{gather*} \Ext^1_\g(L(a),L(a-1))=\Ext^1_\g(L(a-1),L(a))=\C\end{gather*} and $\Ext^1_\g(L(a),L)=0$
 for all simple $L$ not isomorphic to $L(a-1)$. \end{Lemma}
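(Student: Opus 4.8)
The plan is to exploit the length-two module $\Gamma_0(G/B,v(a,0,-a))$ produced just above, together with Propositions~\ref{m^0 greater than Ext} and~\ref{induction functor properties}(3), the self-extension vanishing of Theorem~\ref{selfextensions2}, and the duality coming from Lemma~\ref{symmetrysq} and Corollary~\ref{extsymmetry}.

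First I would pin down $\Gamma_0(G/B,v(a,0,-a))$: since $\Gamma_i(G/B,v(a,0,-a))=0$ for $i\geq 1$ and $\mathcal E(a,0,-a)=[L(a)]+[L(a-1)]$ for $a>2$, this module has exactly the two composition factors $L(a)$ (its top) and $L(a-1)$ (its socle), and the non-split sequence $0\to L(a-1)\to\Gamma_0(G/B,v(a,0,-a))\to L(a)\to 0$ constructed above already gives $\Ext^1_\g(L(a),L(a-1))\neq 0$. To see this space is one-dimensional I would rerun the highest-weight argument from the proof of Proposition~\ref{m^0 greater than Ext}: any non-split extension $0\to L(a-1)\to V\to L(a)\to 0$ forces $V$ to be a highest weight module of highest weight $(a,0,-a)$ whose highest weight space maps isomorphically onto that of $L(a)$, hence a quotient of the Verma module $M_\g(a,0,-a)$, hence of $\Gamma_0(G/B,v(a,0,-a))$ by Proposition~\ref{induction functor properties}(3); as the latter has length two, $V\cong\Gamma_0(G/B,v(a,0,-a))$ and the extension class is unique up to scalar. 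Corollary~\ref{extsymmetry}, applicable since $a,a-1>0$, then transports this to $\Ext^1_\g(L(a-1),L(a))\cong\C$.

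For the vanishing, let $L$ be simple with $\Ext^1_\g(L(a),L)\neq 0$. Extension groups vanish between distinct blocks, so $L$ is $L(b)$ or $\Pi L(b)$ for some $b\geq 0$. Theorem~\ref{selfextensions2}(2), applicable since $(a,0,-a)$ lies in its $m>0$ case, rules out $b=a$ in both parities. If $b<a$ then $(b,0,-b)<(a,0,-a)$, so the highest-weight argument again makes any non-split extension of $L(a)$ by a (possibly parity-shifted) $L(b)$ a quotient of $\Gamma_0(G/B,v(a,0,-a))$; but the only composition factors of that module are $L(a)$ and $L(a-1)$, so the subobject must be $L(a-1)$ itself, i.e.\ $b=a-1$ with no parity shift. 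If $b>a$ I would first use the rigid duality $\Ext^1_\g(M,N)\cong\Ext^1_\g(N^*,M^*)$ and the isomorphisms $L(c)^*\cong\Pi L(c)$ of Lemma~\ref{symmetrysq} (together with the autoequivalence $\Pi$) to identify $\Ext^1_\g(L(a),L(b))$ with $\Ext^1_\g(L(b),L(a))$ and $\Ext^1_\g(L(a),\Pi L(b))$ with $\Ext^1_\g(\Pi L(b),L(a))$, which lands us in the regime $(b,0,-b)>(a,0,-a)$; the same argument applied to $\Gamma_0(G/B,v(b,0,-b))$, whose composition factors are $L(b)$ and $L(b-1)$, then forces $b-1=a$, i.e.\ $b=a+1$, with no surviving parity shift. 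Re-running the length-two argument for $\Gamma_0(G/B,v(a+1,0,-(a+1)))$ also gives $\Ext^1_\g(L(a),L(a+1))\cong\C$, so the only simples carrying a nonzero $\Ext^1$ out of $L(a)$ are $L(a-1)$ and $L(a+1)$, in agreement with the quiver of Theorem~\ref{MainTheorem sq}(5).

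The step I expect to be most delicate is the parity bookkeeping: one has to be sure the highest weight space of the extending module $V$ is $v(a,0,-a)$ in its ``correct'' parity (so that $V$ is a quotient of $\Gamma_0(G/B,v(a,0,-a))$ and not of its parity twist), which is exactly what excludes $\Pi L(a-1)$ as an extension term, and one must likewise track parities when transporting the ``upward'' extensions through the rigid duality and Lemma~\ref{symmetrysq}. Everything else is a direct reading-off of the Euler characteristic values and facts already established above.
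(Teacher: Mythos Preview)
Your argument follows essentially the same route as the paper: exclude self-extensions via Theorem~\ref{selfextensions2}, handle $b<a$ by realizing any non-split extension as a quotient of $\Gamma_0(G/B,v(a,0,-a))$ via Proposition~\ref{induction functor properties}(3), and transport the case $b>a$ via the duality of Corollary~\ref{extsymmetry}. Your treatment is in fact more careful than the paper's on two points: you make the parity bookkeeping explicit (ensuring the highest weight $\h$-module of the extension is $v(a,0,-a)$ in the correct parity, which is what kills $\Pi L(a-1)$), and you correctly observe that $L(a+1)$ survives with $\Ext^1_\g(L(a),L(a+1))\cong\C$, so the vanishing in the lemma should really read ``for all simple $L$ not isomorphic to $L(a\pm 1)$''. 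The paper's one-line ``the case $b>a$ follows by Corollary~\ref{extsymmetry}'' is to be read the same way (reduce to the $b<a$ analysis with roles swapped), but your version spells this out.
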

\begin{proof} We do not have self-extensions by Theorem~\ref{selfextensions2}. If $b<a$, then a non-trivial extension of~$L(a)$ by $L(b)$ or~$\Pi L(b)$ is a quotient of $\Gamma_0(G/B,v(a,0,-a))$ by Proposition~\ref{induction functor properties}(3).
 That forces $b=a-1$ and also implies $\Ext^1_\g(L(a),\Pi L(b))=0$. The case $b>a$ follows by Corollary~\ref{extsymmetry}.
 \end{proof}

\begin{Remark} Our choice of labelling $\Pi L(a)$ vs $L(a)$ for $a\geq 2$ is determined by the above lemma. For $a=1$ we assume that $L(1)$ is dual to the adjoint representation in $\mathfrak{psq}(3)$. For $a=2$ the choice will be
 clear from the following lemma. Note that in the same way using
 \[[\Gamma_0(v(2,0,-2)):L(1)]_\Pi=1,\] we obtain
 \begin{equation}\label{ineq}
 \dim\Ext^1_{\g}(L(2),L(1)\oplus \Pi L(1))=\dim\Ext_{\g}^1(L(1)\oplus \Pi L(1),L(2))\leq 1.
 \end{equation}
 \end{Remark}

\begin{Lemma}\label{specialext}
\begin{gather}
\Ext_{\g}^1(L(1),\C)=\C\qquad \text{and}\qquad \Ext_{\g}^1(\Pi L(1),\C)=0, \label{spe1}\\
\Ext_{\g}^1(L(2),\C)=0\qquad \text{and}\qquad \Ext_{\g}^1(\Pi L(2),\C)=\C ,\label{spe2}\\
\Ext_{\g}^1(L(1), L(2))=0\qquad \text{and}\qquad\Ext_{\g}^1(\Pi L(1),L(2))=0.\label{spe3}
\end{gather}
 \end{Lemma}
 \begin{proof} Identify $\Pi L(1)$ with the simple Lie superalgebra $\mathfrak{psq}(3)$, then $\operatorname{Der}\mathfrak{psq}(3)=\Pi \C$~\cite{Kac}. This implies~(\ref{spe1}) by use of duality and
 Lemma~\ref{symmetrysq}.

In order to prove remaining identities we consider the projective module $P(0)$. We know all its simple constituents: $L(1)$, $\Pi L(1)$, $L(2)$, $\Pi L(2)$ and $L(0)$, $\Pi L(0)$, the last two appear with multiplicity~$2$. Since~$P(0)$ is projective, we know its super dimension is~0, so $L(0)$ and $\Pi L(0)$ occur with same multiplicity.
 Assume for the sake of contradiction that
 \[\Ext_{\g}^1(L(2),\C)=\Ext_{\g}^1(\Pi L(2),\C)=0.\]
 That would imply that
\[ P(0)/\operatorname{rad}P(0)=L(0),\qquad \operatorname{rad}P(0)/\operatorname{rad}^2P(0)=L(1).\]
 Furthermore, since $\q(3)^*$ is a quotient of $P(0)$ we know that $\operatorname{rad}^2P(0)/\operatorname{rad}^3P(0)$ contains $\Pi L(0)$. But it must contain $L(2)$ or $\Pi L(2)$ (otherwise~$L(2)$ will not appear in~$P(0)$).
 We have the inequality
\[\big[\operatorname{rad}^2P(0)/\operatorname{rad}^3P(0):L\big]\leq \dim\Ext^1_\g(L(1),L)\]
 for any simple $L$. Therefore $\operatorname{rad}^2P(0)/\operatorname{rad}^3P(0)$ contains only one copy of $\Pi L(0)$ and one copy of either $L(2)$ or $\Pi L(2)$, \eqref{ineq}.
 Without loss of generality we may assume
\[\operatorname{rad}^2P(0)/\operatorname{rad}^3P(0)=\Pi L(0)\oplus \Pi L(2).\]
 Let $M=P(0)/\operatorname{rad}^3P(0)$. Then by our assumption we have $M^*\subset P(0)^*\cong P(0)$, and we have the exact sequence
\[0\to M^*\to P(0)\to M\to 0.\]
 Furthermore, it also follows from our assumptions that the radical and socle filtrations on~$M$ are the same. In particular, it follows that $M^*/\operatorname{rad} M^*=\Pi L(0)\oplus L(2)$. That would imply
 $\Ext_\g^1(\Pi L(2)\oplus \Pi L(0),L(2)\oplus \Pi L(0))\neq 0$. But that contradicts our original assumption.

 The above argument implies that $\Ext^1_\g(\C,\Pi L(2))=\C$. Therefore
\begin{gather*}
\operatorname{rad}P(0)/\operatorname{rad}^2P(0)=L(1)\oplus\Pi L(2) \qquad\text{or}\\ \operatorname{rad}P(0)/\operatorname{rad}^2P(0)=L(1)\oplus\Pi L(2)\oplus L(2).\end{gather*}
 However, it is easy to see that the latter case is impossible since otherwise by self-duality of $P(0)$ we have $\operatorname{soc}^2P(0)/\operatorname{soc}P(0) =\Pi L(1)\oplus\Pi L(2)\oplus L(2)$ and that would imply $[P(0):L(2)]_\Pi>2$.
 Therefore we have $ \operatorname{rad}P(0)/\operatorname{rad}^2P(0)=L(1)\oplus\Pi L(2)$, and that implies~(\ref{spe2}).

 Moreover, we obtain the radical filtration of $P(0)$ as shown in Appendix~\ref{appendixA}. Since $\Pi v(2,0,-2)$ is the highest weight $\h$-submodule of $P(0)$, we have a homomorphism
 $\gamma\colon \Gamma_0(G/B,\Pi v(2,0,-2))\allowbreak \to P(0)$ and from the
 socle filtration of $P(0)$ (in this case we have $\operatorname{soc}^kP(0)=\operatorname{rad}^{5-k}P(0)$) we obtain that $\gamma$ is injective. The socle filtration of $\Gamma_0(G/B,\Pi v(2,0,-2))$ is inherited from that of~$P(0)$. We get
 \begin{gather*}
 \operatorname{soc}\Gamma_0(G/B,\Pi v(2,0,-2))=L(0),\\
 \operatorname{soc}^2\Gamma_0(G/B,\Pi v(2,0,-2))/\operatorname{soc}\Gamma_0(G/B,\Pi v(2,0,-2))=\Pi L(1),\\
 \operatorname{soc}^3\Gamma_0(G/B,\Pi v(2,0,-2))/\operatorname{soc}^2\Gamma_0(G/B,\Pi v(2,0,-2))=\Pi L(0),\\
 \operatorname{soc}^4\Gamma_0(G/B,\Pi v(2,0,-2))/\operatorname{soc}^3\Gamma_0(G/B,\Pi v(2,0,-2))=\Pi L(2).
 \end{gather*}
 That proves (\ref{spe3}).
 \end{proof}

Note that Lemmas \ref{generalext} and~\ref{specialext} prove that the Ext-quiver for the principal block for $\mathfrak{sq}(3)$ coincides with the one in Theorem~\ref{MainTheorem sq}.

\subsection[Relations for the principal block for g=sq(3)]{Relations for the principal block for $\boldsymbol{\g=\mathfrak{sq}(3)}$}\label{section6.2}

We first compute the radical filtration of all indecomposable projectives. Using the self-duality (up to parity) of $P(a)$ and fact that we know all possible extensions of simples, we automatically know the top~2 and bottom~2 layers. It turns out the other layers are fixed as well, as shown below. Diagrams are in Appendix~\ref{appendixA}.
For $P(0)$, we just obtained in the proof of Lemma~\ref{specialext}.

For $P(1)$, $\operatorname{Top}P(1)=L(1)$ and $\operatorname{rad}P(1)/\operatorname{rad}^2P(1)=\Pi\C$. The only extension with $\Pi\C$ is~$L(2)$.

For $P(2)$, $\operatorname{Top}P(2)=L(2)$ and $\operatorname{rad}P(2)/\operatorname{rad}^2P(2)= L(0)+L(3)$. Considering possible extensions of $L(0)$ and $L(3)$, we find $L(1)$ is subquotient of $\operatorname{rad}^2P(2)$ and $\operatorname{soc}P(2)=L(2)$. Finally, there only exists an extension $\frac{L(1)}{\Pi\C}$ and not~$\frac{L(1)}{\C}$.

For $P(a)$, $a\geq3$, $\operatorname{Top}P(a)=L(a)$ and $\operatorname{rad}P(a)/\operatorname{rad}^2P(a)= L(a-1)+L(a+1)$.

Now, we will use the radical filtrations to compute all relations. Note in all cases,
\[\dim\Hom_{\g}(P(a,0,-a), M)=[M:L(a,0,-a)].\]

Let
\begin{gather*} a\in\Hom_{\g}(P(1),P(0)),\qquad c\in\Hom_{\g}(P(0),\Pi P(1)),\\
d\in\Hom_{\g}(P(2),\Pi P(0)), \qquad b\in\Hom_{\g}( P(0), P(2)),
\end{gather*} and the $a'$, $b'$, $c'$, $d'$ be the corresponding parity-changed arrows. Since $[P(1):\Pi L(1)] = [P(2):\Pi L(2)]=0$, we obtain $bd' = ca = b'd = c'a' = 0$.

Let
\[ a_t\in\Hom_{\g}(P(t,P(t+1)), \qquad b_t\in\Hom_{\g}(P(t+1),P(t)). \]
Then
\[ [\Pi P(0): L(3)]= [P(3):L(0)] = 0, \qquad [P(t+1):L(t-1)]= [P(t-1):L(t+1)]=0\] for $t\geq 3$ implies the relations $db_2=a_2b=0$ and $a_{t+1}a_t = b_{t+1}b_t = 0$, $t\geq2$. By symmetry, we get the parity-changed analogues of these relations.

Next consider the cycle paths in $P(0)$: $ac'a'c$, $d'b'a'c$, $d'b'db$, $ac'db$ We know from previous paragraph that $ac'a'c=0=db'db$. Also, since $\dim\Hom_{\g}(P(0),\Pi P(0))=2$, $a'c$ and $db$ are not scalar multiples. Thus
\[\im(d'b'a'c) = L(0) = \im(ac'db)\]
and $\dim\End_{\g}(P(0))=2$, implies $d'b'a'c=\lambda_0 ac'db$ for $\lambda_0\in\C^*$. Similarly we find $dbac' = \lambda_0'a'cd'b'$.

Next consider the nontrivial cycle paths in $P(2)$: $b_2a_2$, $bac'd$. Since
\[\im(b_2a_2) = L(2) = \im(bac'd),\] and $\dim\End_{\g}(P(2))=2$, we conclude $b_2a_2=\lambda_2bac'd$, $\lambda_2\in\C^*$.

Finally, for $t\geq 3$, we find cycle paths in $P(a)$ are $a_tb_t$, $b_{t+1}a_{t+1}$. Both have image $L(t)$, hence $a_tb_t=\lambda_tb_{t+1}a_{t+1}$. Observe we can sufficiently scale all arrows and hence normalize all $\lambda_t,\lambda_t'\in\C^*$ to equal~1. The remaining $\dim\Hom_{\g}(P(a),P(b))$ calculations shows there are no other relations.

\subsection[The principal block of q(3)]{The principal block of $\boldsymbol{\q(3)}$}\label{section6.3}
We start with the following general statement.
\begin{Lemma}\label{extq}
Let $\lambda$, $\mu$ be two distinct weights in the principal block such that there exists $i,j\colon \lambda_i=\mu_j= 0$. If \[ \dim\Ext^1_{\mathfrak{sq}(n)}(L_{\mathfrak{sq}}(\lambda),L_{\mathfrak{sq}}(\mu)) +\dim\Ext^1_{\mathfrak{sq}(n)}(L_{\mathfrak{sq}}(\lambda),\Pi L_{\mathfrak{sq}}(\mu))\leq 1,
\] then
\[\Ext^1_{\mathfrak{sq}(n)}(L_{\mathfrak{sq}}(\lambda),L_{\mathfrak{sq}}(\mu))=\Ext^1_{\q(n)}(L(\lambda),L(\mu)).\]
\end{Lemma}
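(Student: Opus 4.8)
\emph{Proof proposal.}
The plan is to move the computation down to $\mathfrak{sq}(n)$ through the induction functor $\Ind=\Ind^{\q(n)}_{\sq(n)}$ of Section~\ref{generalind}. Since $U(\q(n))$ is free over $U(\sq(n))$, the functor $\Ind$ is exact, and being left adjoint to the exact functor $\Res$ it carries projectives to projectives (indeed $\Ind P_{\sq}(\nu)\cong P_{\q}(\nu)$); hence $\Ext^i_{\q}(\Ind M,N)\cong\Ext^i_{\sq}(M,\Res N)$ for all $i$ (Shapiro's lemma, as already used in Remark~\ref{Shapiro}). Because $\lambda_i=0$ and $\mu_j=0$, Corollary~\ref{Ind-Res}(a) gives $\Res L(\lambda)=L_{\sq}(\lambda)$, $\Res L(\mu)=L_{\sq}(\mu)$, and identifies $\Ind L_{\sq}(\lambda)$ with the length-two module whose composition factors are $L(\lambda)$ and $\Pi L(\lambda)$. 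Surjectivity of the adjunction counit $\Ind\Res L(\lambda)\to L(\lambda)$ shows that $L(\lambda)$ is its head, so there is a short exact sequence of $\q$-modules
\[ 0\to \Pi L(\lambda)\xrightarrow{\ \iota\ }\Ind L_{\sq}(\lambda)\xrightarrow{\ \pi\ } L(\lambda)\to 0. \]

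First I would apply $\Hom_{\q}(-,L(\mu))$ and $\Hom_{\q}(-,\Pi L(\mu))$ to this sequence. Since $\lambda\neq\mu$, no two of $L(\lambda),\Pi L(\lambda),L(\mu),\Pi L(\mu)$ are isomorphic, and the composition factors of $\Ind L_{\sq}(\lambda)$ are $L(\lambda)$ and $\Pi L(\lambda)$; hence all of the relevant $\Hom$ groups vanish and the long exact sequence reads
\[ 0\to\Ext^1_{\q}(L(\lambda),L(\mu))\xrightarrow{\ \pi^{*}\ }\Ext^1_{\q}(\Ind L_{\sq}(\lambda),L(\mu))\xrightarrow{\ \iota^{*}\ }\Ext^1_{\q}(\Pi L(\lambda),L(\mu)), \]
together with its analogue obtained by replacing $L(\mu)$ by $\Pi L(\mu)$. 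Shapiro's lemma, together with $\Res\Pi L(\mu)=\Pi L_{\sq}(\mu)$, identifies the middle terms:
\[ \dim\Ext^1_{\q}(\Ind L_{\sq}(\lambda),L(\mu))=\dim\Ext^1_{\sq}(L_{\sq}(\lambda),L_{\sq}(\mu)),\qquad \dim\Ext^1_{\q}(\Ind L_{\sq}(\lambda),\Pi L(\mu))=\dim\Ext^1_{\sq}(L_{\sq}(\lambda),\Pi L_{\sq}(\mu)). \]

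Next I would set $e=\dim\Ext^1_{\q}(L(\lambda),L(\mu))$, $f=\dim\Ext^1_{\q}(L(\lambda),\Pi L(\mu))$ and $d=\dim\im(\iota^{*})$. Exactness of the first sequence gives $\dim\Ext^1_{\sq}(L_{\sq}(\lambda),L_{\sq}(\mu))=e+d$ with $d\le\dim\Ext^1_{\q}(\Pi L(\lambda),L(\mu))$, and exactness (injectivity of $\pi^{*}$) in the second sequence, combined with Shapiro, gives $\dim\Ext^1_{\sq}(L_{\sq}(\lambda),\Pi L_{\sq}(\mu))\ge f$. Since $\Pi$ is an exact self-equivalence it identifies $\Ext^1_{\q}(\Pi L(\lambda),L(\mu))$ with $\Ext^1_{\q}(L(\lambda),\Pi L(\mu))$, so $d\le f$. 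Feeding this into the hypothesis,
\[ 1\ \ge\ \dim\Ext^1_{\sq}(L_{\sq}(\lambda),L_{\sq}(\mu))+\dim\Ext^1_{\sq}(L_{\sq}(\lambda),\Pi L_{\sq}(\mu))\ \ge\ (e+d)+f\ \ge\ e+2d, \]
and, $e$ and $d$ being non-negative integers, this forces $d=0$. Hence $\dim\Ext^1_{\sq}(L_{\sq}(\lambda),L_{\sq}(\mu))=e=\dim\Ext^1_{\q}(L(\lambda),L(\mu))$, and unwinding the construction $\pi^{*}$ followed by the Shapiro isomorphism is exactly the restriction map, so $\Res$ realises the asserted equality.

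The step I expect to require the most care is pinning down the orientation of the displayed short exact sequence, namely that $L(\lambda)$ rather than $\Pi L(\lambda)$ is the head of $\Ind L_{\sq}(\lambda)$, since the argument hinges on $\pi^{*}$ (and not $\iota^{*}$) being the injective edge map of the long exact sequence; this is supplied by the surjectivity of the adjunction counit together with Corollary~\ref{Ind-Res}. Everything else is routine homological bookkeeping, and the hypothesis enters only at the final chain of inequalities.
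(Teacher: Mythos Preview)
Your argument is correct and follows the same route as the paper: both proofs apply $\Hom_{\q}(-,L(\mu))$ and $\Hom_{\q}(-,\Pi L(\mu))$ to the length-two exact sequence for $\Ind L_{\sq}(\lambda)$, invoke Shapiro to identify the middle term, and then read off the conclusion from the hypothesis. Your explicit dimension count $1\ge (e+d)+f\ge e+2d\Rightarrow d=0$ makes precise what the paper compresses into ``the lemma follows from Shapiro's lemma and the hypotheses''; in particular your care about the orientation of the sequence (that $L(\lambda)$, not $\Pi L(\lambda)$, is the head, via surjectivity of the counit) is well placed, since the paper's displayed sequence $0\to L(\lambda)\to V\to\Pi L(\lambda)\to 0$ has the roles reversed relative to the statement of Corollary~\ref{Ind-Res}(a), and with that reversed orientation the same bookkeeping would yield $\dim\Ext^1_{\sq}=f$ rather than $e$.
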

\begin{proof} By Corollary~\ref{Ind-Res}, $\Res L(\lambda) = L_{\sq}(\lambda)$ and $\Ind L_{\sq}(\lambda) = \frac{\Pi L(\lambda)}{L(\lambda)}$. The nonsplit short exact sequence $0\rightarrow L(\lambda)\rightarrow V\rightarrow \Pi L(\lambda)\rightarrow 0$ of $\q(n)$-modules, which exists by Theorem~\ref{selfextensions}, gives rise to long exact sequences ($\mu\neq\lambda$)
\begin{align*}
0&\leftarrow \Ext_{\q(n)}^1(L(\lambda),L(\mu))\leftarrow \Ext_{\q(n)}^1(V,L(\mu))\leftarrow \Ext_{\q(n)}^1(\Pi L(\lambda),L(\mu))\\
& \leftarrow\Ext_{\q(n)}^2(L(\lambda),L(\mu))\cdots,\\
0&\leftarrow \Ext_{\q(n)}^1(L(\lambda),\Pi L(\mu))\leftarrow \Ext_{\q(n)}^1(V,\Pi L(\mu))\leftarrow \Ext_{\q(n)}^1(\Pi L(\lambda),\Pi L(\mu))\\
& \leftarrow\Ext_{\q(n)}^2(L(\lambda),\Pi L(\mu))\cdots.
\end{align*}
Now the lemma follows from Shapiro's lemma, $\Ext_{\q(n)}^1(V,L(\mu)) = \Ext_{\mathfrak{sq}(n)}^1(L_{\mathfrak{sq}}(\lambda),L_{\mathfrak{sq}}(\mu))$, and the hypotheses.
\end{proof}

Lemma \ref{extq} implies that the Ext quiver for the principal block of $\q(3)$ is obtained from that of $\mathfrak{sq}(3)$ by adding a single arrow between each $L(a)$ and $\Pi L(a)$ (Theorem \ref{selfextensions}).

\begin{Lemma}\label{q relations} Let $\g=\q(3)$ and $\theta\in\Hom_{\q}(P(\lambda),\Pi P(\lambda))$ be the unique self extension for each $\lambda$ in the principal $\q$-block. Let
 $\mathbb P_{\mathfrak{sq}}$ $($resp., $\mathbb P_{\mathfrak{q}})$ denote the direct sum of all indecomposable projectives in the principal block for $\mathfrak{sq}(3)$ $($resp., $\q(3))$;
 $\mathcal{A}=\End_{\mathfrak{sq}}(\mathbb P_{\mathfrak{sq}})$ denote the algebra defined by the quiver with relations in principal $\mathfrak{sq}(3)$-block. Then the algebra defined by quiver with relations in principal $\q(3)$-block is
\[\mathcal{A}'=\End_{\q}(\mathbb P_{\mathfrak{q}})\cong\mathcal{A}\otimes\C[\theta]/\big(\theta^2\big).\]
\end{Lemma}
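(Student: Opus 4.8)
The plan is to deduce this from Lemma~\ref{general} applied with $M=\mathbb P_{\mathfrak{sq}}$. First one has to put the hypotheses of that lemma in place. The module $\mathbb P_{\mathfrak{sq}}$ is projective by construction, and it satisfies $\Pi\mathbb P_{\mathfrak{sq}}\cong\mathbb P_{\mathfrak{sq}}$: every weight occurring in the principal block of $\q(3)$ --- namely $(0,0,0)$ and the $(a,0,-a)$ --- has an even number ($0$ or $2$) of non-zero coordinates, so every simple object there is of type~$M$, and hence for each $\lambda$ both $P_{\mathfrak{sq}}(\lambda)$ and $\Pi P_{\mathfrak{sq}}(\lambda)$ occur as indecomposable summands of $\mathbb P_{\mathfrak{sq}}$. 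Using the isomorphisms $P_{\q}(\lambda)\cong\Ind P_{\mathfrak{sq}}(\lambda)$ of Section~\ref{section5.3.3} together with the fact that $\Ind$ commutes with $\Pi$, one gets $\Ind\mathbb P_{\mathfrak{sq}}\cong\mathbb P_{\q}$, so that $\mathcal A=\End_{\mathfrak{sq}}(\mathbb P_{\mathfrak{sq}})$ and $\mathcal A'=\End_{\q}(\mathbb P_{\q})=\End_{\q}(\Ind\mathbb P_{\mathfrak{sq}})$ are exactly the pair $(\mathcal A,\mathcal A')$ of Lemma~\ref{general}; by Theorem~\ref{quiver = representations} these are the path algebras modulo relations of the two principal blocks. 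Since the principal block has infinitely many simple objects, one applies Lemma~\ref{general} on each of the finite-dimensional spaces $\Hom_{\mathfrak{sq}}(P_{\mathfrak{sq}}(\lambda),P_{\mathfrak{sq}}(\mu))$ separately (the dimension argument in its proof is local in this sense).

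It remains to exhibit $\theta\in\mathcal A'$ with $\Ker\theta=\im\theta$ and $\Ker\theta\cap(1\otimes\mathbb P_{\mathfrak{sq}})=0$; the candidate is the ``total self-extension'' endomorphism. By Theorem~\ref{selfextensions}, for every $\lambda$ in the principal $\q(3)$-block one has $\Ext^1_{\q}(L(\lambda),\Pi L(\lambda))=\C$ (each such $\lambda$ has $m>0$ in the notation of that theorem) and $\Ext^1_{\q}(L(\lambda),L(\lambda))=0$ (type~$M$), so the Ext-quiver of the principal $\q(3)$-block is that of $\mathfrak{sq}(3)$ together with one extra pair of arrows $L(\lambda)\rightleftarrows\Pi L(\lambda)$ for each $\lambda$, as noted after Lemma~\ref{extq}. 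I would lift these arrows to an endomorphism of $\mathbb P_{\q}$ in the spirit of the standard-block construction of $h$ in Section~\ref{section5.3.3}. Concretely, using the radical filtrations of the $P_{\q}(\lambda)$ --- which are determined by BGG reciprocity (Theorem~\ref{BGG Reciprocity}), the list of $\Ext^1$'s, and the self-duality up to parity and twist of the projectives (for $\q(3)$ one has $S^{\mathrm{top}}(\g_{\bar 1})=S^{9}(\g_{\bar 1})\cong\Pi\C$, hence $I(L)\cong\Pi P(L)$, which pins down the socle layers) --- together with $\Res P_{\q}(\lambda)\cong P_{\mathfrak{sq}}(\lambda)\oplus\Pi P_{\mathfrak{sq}}(\lambda)$, one builds a $\q(3)$-submodule $K$ of $\mathbb P_{\q}$ which is also a quotient, with $\mathbb P_{\q}/K\cong K$, with $\Res K\cong\mathbb P_{\mathfrak{sq}}$, and with $K$ complementary to $1\otimes\mathbb P_{\mathfrak{sq}}$. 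Then $\theta:=(\mathbb P_{\q}\twoheadrightarrow K\hookrightarrow\mathbb P_{\q})$ has $\im\theta=\Ker\theta=K$ and $\Ker\theta\cap(1\otimes\mathbb P_{\mathfrak{sq}})=0$, and Lemma~\ref{general} yields $\mathcal A'\cong\mathcal A\otimes\C[\theta]/(\theta^2)$. This presentation is precisely the one in Theorem~\ref{MainTheorem}(5): the factor $\C[\theta]/(\theta^2)$ records the relations $\theta^2=0$ and $\theta\gamma=\gamma\theta$ for $\gamma\in\{a,b,c,d,x,y\}$.

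The main obstacle is the construction of $K$, i.e.\ controlling the submodule lattices of the $P_{\q}(\lambda)$ precisely enough to see that such a $K$ exists with those three properties. Unlike in the standard block, $\theta$ cannot be taken block-diagonal with respect to the decomposition $\mathbb P_{\q}=\bigoplus_\lambda\big(P_{\q}(\lambda)\oplus\Pi P_{\q}(\lambda)\big)$: a comparison of composition factors (of $P_{\q}(\lambda)$ versus $\Pi P_{\q}(\lambda)$, which differ because all simples are of type~$M$) shows that $K$ must mix the $\lambda$-summands, so identifying the correct ``lower-order'' components of $\theta$ requires the full radical and socle filtrations of the $P_{\q}(\lambda)$. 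The delicate case is $\lambda=(0,0,0)$, where $P_{\q}(0)$ has the most complicated structure; the needed information is available from the $\mathfrak{sq}(3)$ computation in Lemma~\ref{specialext} and Appendix~\ref{appendixA}, transported along $P_{\q}(0)\cong\Ind P_{\mathfrak{sq}}(0)$ and $\Res P_{\q}(0)\cong P_{\mathfrak{sq}}(0)\oplus\Pi P_{\mathfrak{sq}}(0)$. Once $K$ is in hand the rest is a direct application of Lemma~\ref{general}.
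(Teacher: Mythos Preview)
Your overall strategy---reduce to Lemma~\ref{general} with $M=\mathbb P_{\mathfrak{sq}}$---is exactly the paper's. The divergence is in how you propose to build $\theta$, and there you make the job much harder than it is, and in one place you assert something false.

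Your claim that ``$\theta$ cannot be taken block-diagonal with respect to $\mathbb P_{\q}=\bigoplus_\lambda\bigl(P_{\q}(\lambda)\oplus\Pi P_{\q}(\lambda)\bigr)$'' is wrong: the paper constructs $\theta$ precisely as a direct sum of maps $\theta_\lambda\colon P_\q(\lambda)\to\Pi P_\q(\lambda)$ (and their $\Pi$-shifts), each staying inside its own pair. For $\lambda\neq(0,0,0)$ this comes for free from Frobenius reciprocity: since $\Hom_{\mathfrak{sq}}(P_{\mathfrak{sq}}(\lambda),\Pi P_{\mathfrak{sq}}(\lambda))=0$ (an output of the $\mathfrak{sq}(3)$ quiver computation), one has
\[
\Hom_{\q}\bigl(P_\q(\lambda),\Pi P_\q(\lambda)\bigr)=\Hom_{\mathfrak{sq}}\bigl(P_{\mathfrak{sq}}(\lambda),\Res\Pi P_\q(\lambda)\bigr)=\Hom_{\mathfrak{sq}}\bigl(P_{\mathfrak{sq}}(\lambda),P_{\mathfrak{sq}}(\lambda)\bigr)=\C,
\]
so $\theta_\lambda$ is the map corresponding to the identity, and $\bar P(\lambda):=\operatorname{Im}\theta_\lambda$ satisfies $\Res\bar P(\lambda)\cong P_{\mathfrak{sq}}(\lambda)$ and sits in $0\to\Pi\bar P(\lambda)\to P_\q(\lambda)\to\bar P(\lambda)\to 0$. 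No radical or socle filtrations of $P_\q(\lambda)$ are needed (indeed, in the paper those filtrations are \emph{deduced from} this lemma, so your proposed route is circular). For $\lambda=(0,0,0)$ the vanishing $\Hom_{\mathfrak{sq}}(P_{\mathfrak{sq}}(0),\Pi P_{\mathfrak{sq}}(0))=0$ fails, and the paper handles this single case by an explicit construction: write $P_\q(0)=\operatorname{pr}\bigl(\Ind_{\q(3)_{\bar 0}}^{\q(3)}\C\bigr)$, pass through the intermediate subalgebra $\mathfrak{l}=\q(3)_{\bar 0}\oplus\C\bar H$ (with $\bar H=\bar H_1+\bar H_2+\bar H_3$, which squares to zero on the principal block), and obtain $\theta_0$ from the induced exact sequence $0\to\Pi\Ind_{\mathfrak l}^{\q(3)}\C\to\Ind_{\q(3)_{\bar 0}}^{\q(3)}\C\to\Ind_{\mathfrak l}^{\q(3)}\C\to 0$.

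So the ``main obstacle'' you identify dissolves: the single nontrivial input beyond Lemma~\ref{general} is the $\mathfrak{sq}(3)$ relation $\Hom_{\mathfrak{sq}}(P_{\mathfrak{sq}}(\lambda),\Pi P_{\mathfrak{sq}}(\lambda))=0$ for $\lambda\neq 0$, together with one ad hoc construction at $\lambda=0$.
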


\begin{proof} Recall the functors $\Res$ and $\Ind$ from Section~\ref{generalind}. We would like to show that $M=\mathbb{P}_{\mathfrak{sq}}$ satisfies the assumptions of Lemma~\ref{general}.
 Corollary~\ref{Ind-Res} implies $\mathbb P_{\q}\cong\Ind M$. It remains to show the existence of $\theta$.
 The BGG reciprocity implies for $\lambda$, $\mu$ in principal block, $[P_{\q}(\lambda):L(\mu)]=2[P_{\mathfrak{sq}}(\lambda): L_{\mathfrak{sq}}(\mu)]$.
 Since $\Ind P_{\mathfrak{sq}}(\lambda)$ is projective,
\[ \Ind P_{\mathfrak{sq}}(\lambda)=P_{\q}(\lambda) \qquad \text{and}\qquad \Res P_{\q}(\lambda)=P_{\mathfrak{sq}}(\lambda)\oplus\Pi P_{\mathfrak{sq}}(\lambda).\]
If $\lambda\neq 0$ then $\Hom_{\mathfrak{sq}}(\Pi P_{\mathfrak{sq}}(\lambda),P_{\mathfrak{sq}}(\lambda))=0$ and therefore the above decomposition is unique.

By Frobenius reciprocity and fact that $\Hom_{\mathfrak{sq}}(P_{\mathfrak{sq}}(\lambda), \Pi P_{\mathfrak{sq}}(\lambda))=0$ for $\lambda\neq 0$, we have for $\lambda\neq 0$
\begin{align*}
\Hom_{\q}(\Ind P_{\mathfrak{sq}}(\lambda),\Ind \Pi P_{\mathfrak{sq}}(\lambda)) &= \Hom_{\mathfrak{sq}}(P_{\mathfrak{sq}}(\lambda), \Res \Ind \Pi P_{\mathfrak{sq}}(\lambda))\\
&= \Hom_{\mathfrak{sq}}(P_{\mathfrak{sq}}(\lambda), P_{\mathfrak{sq}}(\lambda)\oplus\Pi P_{\mathfrak{\mathfrak{sq}}}(\lambda))\\
&=\Hom_{\mathfrak{sq}}(P_{\mathfrak{sq}}(\lambda), \Res \Ind P_{\mathfrak{sq}}(\lambda))=\C.
\end{align*}

We choose $\theta_{\lambda}\colon P_{\mathfrak{q}}(\lambda)\to\Pi P_{\mathfrak{q}}(\lambda)$ corresponding to the identity map in $\Hom_{\mathfrak{sq}}(P_{\mathfrak{sq}}(\lambda),P_{\mathfrak{sq}}(\lambda))$ and set
$\bar P(\lambda)=\operatorname{Im}\theta$. Then we have an exact sequence
\[ 0\to\Pi\bar P(\lambda)\to P_{\q}(\lambda)\to \bar P(\lambda)\to 0,\]
with $\Res \bar P(\lambda)\cong P_{\mathfrak{sq}}(\lambda)$.
Now let us prove that for $\lambda=0$ we also have $\theta\colon P_{\q}(0)\to \Pi P_{\q}(0)$ with $(\Pi\theta)\theta=0$ and hence the exact sequence
\[ 0\to\Pi\bar P(0)\to P_{\q}(\lambda)\to \bar P(0)\to 0.\]
We use that $P_{\q}(0)=\operatorname{pr}\big(\Ind_{\q(3)_{\bar 0}}^{\q(3)}\C\big)$ where $\operatorname{pr}$ denote the projection on the principal block. Let $\mathfrak l=\q(3)_{\bar 0}\oplus \C\bar H$ where $\bar H=\bar H_1+\bar H_2+\bar H_3$.
Since $\bar H^2$ acts by zero on the modules of our block we have an exact sequence of $\mathfrak{l}$-modules
\[ 0\to\Pi\C\to \Ind_{\q(3)_{\bar 0}}^{\mathfrak l}\C\to \C\to 0,\]
and therefore the exact sequences
\begin{gather*}
0\to\Pi\Ind^{\q(3)}_{\mathfrak l}\C\xrightarrow{\alpha} \Ind_{\q(3)_{\bar 0}}^{\q(3)}\C\xrightarrow{\beta} \Ind^{\q(3)}_{\mathfrak l}\C\to 0,\\
0\to\Ind^{\q(3)}_{\mathfrak l}\C\xrightarrow{\Pi\alpha} \Pi\Ind_{\q(3)_{\bar 0}}^{\q(3)}\C\xrightarrow{\Pi\beta} \Pi\Ind^{\q(3)}_{\mathfrak l}\C\to 0.\end{gather*}
By setting $\theta=\operatorname{pr}(\Pi\alpha\beta)\operatorname{pr}$ we obtain the desired claim.

To finish the proof we just use Lemma~\ref{general}.
\end{proof}

\appendix

\section[Radical filtrations for P{g}(lambda) when g=sq(3), q(3)]{Radical filtrations for $\boldsymbol{P_{\g}(\lambda)}$ when $\boldsymbol{\g=\mathfrak{sq}(3), \q(3)}$}\label{appendixA}

In all radical filtrations, an edge denotes an extension. Observe for $P_{\q(3)}(a,0,-a)$, the ``left half'' corresponds to $\ker\theta$ and the ``right half'' corresponds to $\im\theta$.

\subsection[g=sq(3)]{$\boldsymbol{\g=\mathfrak{sq}(3)}$} \label{sq filtrations}

The radical filtrations are $(a\geq 3)$:
\begin{gather*}
\xymatrix@-1.5pc{P(1,0,0)&\\
L(1,0,0)\ar@{-}[d]\ar@{-}[dr]&\\
\Pi L(2,1,-2)\ar@{-}[d]&L(2,1,-2)\ar@{-}[dl]\\
L(1,0,0)&
}\qquad
\xymatrix@-1.5pc{P(2,1,-2)&\\
L(2,1,-2)\ar@{-}[d]\ar@{-}[dr]&\\
 L(3,1,-3)\ar@{-}[d]&L(1,0,0)\ar@{-}[dl]\\
L(2,1,-2)&\\
}
\\
\xymatrix@-1.5pc{P(a,1,-a)&\\
L(a,1,-a)\ar@{-}[d]\ar@{-}[dr]&\\
L(a+1,1,-a-1)\ar@{-}[d]&L(a-1,1,1-a)\ar@{-}[dl]\\
L(a,1,a)&\\
}
\qquad
\xymatrix@-1.5pc{&P(0)&\\
&L(0)\ar@{-}[dr]\ar@{-}[dl]&\\
 L(1)\ar@{-}[d]&&\Pi L(2)\ar@{-}[d]\\
\Pi L(0)\ar@{-}[d]&&\Pi L(0)\ar@{-}[d]\\
L(2)\ar@{-}[dr]&& \Pi L(1)\ar@{-}[dl]\\
&L(0)&
}\\
\xymatrix@-1.5pc{&P(1)&\\
&L(1)\ar@{-}[d]\ar@{-}[d]&\\
&\Pi L(0)\ar@{-}[d]&\\
&L(2)\ar@{-}[d]&\\
&L(0)\ar@{-}[d]&\\
&L(1)&\\
}
\qquad
\xymatrix@-1.5pc{&P(2)&\\
&L(2)\ar@{-}[dl]\ar@{-}[dr]&\\
L(0)\ar@{-}[d]&&L(3)\ar@{-}[dddl]\\
L(1)\ar@{-}[d]&&\\
\Pi L(0)\ar@{-}[dr]&&\\
&L(2)&}
 \qquad
\xymatrix@-1.5pc{&P(a)&\\
&L(a)\ar@{-}[dl]\ar@{-}[dr]&\\
L(a+1)\ar@{-}[dr]&&L(a-1)\ar@{-}[dl]\\
&L(a)&
}
\end{gather*}

\subsection[g=q(3)]{$\boldsymbol{\g=\q(3)}$} The radical filtrations are $(a\geq 3)$:

\begin{gather*}
\xymatrix@-1.5pc{P(1,0,0)&\\
L(1,0,0)\ar@{-}[dr]\ar@{-}[d]&\\
 L(1,0,0)\ar@{-}[d]&L(2,1,-2)\ar@{-}[d]\\
 L(2,1,-2)\ar@{-}[d]&L(1,0,0)\ar@{-}[dl]\\
L(1,0,0)&
}\qquad
\xymatrix@-1.5pc{P(2,1,-2)&\\
L(2,1,-2)\ar@{-}[d]\ar@{-}[dr]&\\
L(3,1,-3)\ar@{-}[dd]&L(1,0,0)\ar@{-}[d]\\
&L(1,0,0)\ar@{-}[dl]\\
L(2,1,-2)&
}
\\
\xymatrix@-1.5pc{P(a,1,-a)&\\
L(a,1,-a)\ar@{-}[d]\ar@{-}[dr]&\\
L(a+1,1,-a-1)\ar@{-}[d]&L(a-1,1,1-a)\ar@{-}[dl]\\
L(a,1,a)&\\
}
 \end{gather*}

The following radical filtrations are deduced from the fact that $\theta\colon P(a)\rightarrow\Pi P(a)$ corresponds to $\text{id}\colon P_{\mathfrak{sq}}(a)\rightarrow P_{\mathfrak{sq}}(a)$ as seen from Lemma~\ref{q relations}:
\begin{gather*}%\label{Figure 1}
 \xymatrix@-1.5pc{&P(0)&\\
 &L(0)\ar@{-}[dl]\ar@{-}[d]\ar@{-}[d]\ar@{-}[dr]&&\\
 L(1)\ar@{-}[d]\ar@{-}[drr]&\Pi L(2)\ar@{-}[d]\ar@{-}[drr]	&\Pi L(0)\ar@{-}[d]\ar@{-}[dr]\\
\Pi L(0)\ar@{-}[d]\ar@{-}[drr]&\Pi L(0)\ar@{-}[d]\ar@{-}[drr]	&\Pi L(1)\ar@{-}[d]& L(2)\ar@{-}[d]\\
L(2)\ar@{-}[drr]\ar@{-}[dr]& \Pi L(1)\ar@{-}[d]\ar@{-}[drr]	&L(0)\ar@{-}[d] & L(0)\ar@{-}[d]\\
&L(0)\ar@{-}[dr]&	\Pi L(2)\ar@{-}[d]& L(1)\ar@{-}[dl]\\
&&\Pi L(0)&&
}
\qquad
\xymatrix@-1.5pc{&P(1)&&\\
&L(1)\ar@{-}[d]\ar@{-}[d]\ar@{-}[dr]&&\\
&\Pi L(0)\ar@{-}[d]\ar@{-}[dr]&\Pi L(1)\ar@{-}[d]&\\
&L(2)\ar@{-}[d]\ar@{-}[dr]&L(0)\ar@{-}[d]&\\
&L(0)\ar@{-}[d]\ar@{-}[dr]&\Pi L(2)\ar@{-}[d]\\
&L(1)\ar@{-}[dr]&\Pi L(0)\ar@{-}[d]\\
&&\Pi L(1)&
}
\\
\xymatrix@-1.5pc{P(2)&\\
L(2)\ar@{-}[d]\ar@{-}[dr]\ar@{-}[drr]&\\
L(0)\ar@{-}[drr]\ar@{-}[d]&L(3)\ar@{-}[ddd]\ar@{-}[drr]		&\Pi L(2)\ar@{-}[d]\ar@{-}[dr]\\
L(1)\ar@{-}[d]\ar@{-}[drr]&&		\Pi L(0)\ar@{-}[d]&\Pi L(3)\ar@{-}[ddd]\\
\Pi L(0)\ar@{-}[dr]\ar@{-}[drr]&&		\Pi L(1)\ar@{-}[d]\\
&L(2)\ar@{-}[drr]&		L(0)\ar@{-}[dr]\\
&&&\Pi L(2)
}
 \qquad
\xymatrix@-1.5pc{P(a)&\\
L(a)\ar@{-}[d]\ar@{-}[dr]\ar@{-}[drr]\\
L(a+1)\ar@{-}[d]\ar@{-}[dr]&L(a-1)\ar@{-}[dl]\ar@{-}[dr]	&\Pi L(a)\ar@{-}[dl]\ar@{-}[d]\ar@{-}[dll]\\
L(a)\ar@{-}[drr]&\Pi L(a+1)\ar@{-}[dr]&\Pi L(a-1)\ar@{-}[d]\\
&&\Pi L(a)
}
\end{gather*}

\subsection*{Acknowledgements}
The authors would like to thank Dimitar Grantcharov for numerous helpful discussions. N.G.\ was supported by NSF grant DGE 1746045 and V.S.\ was supported by NSF grant 1701532.

\pdfbookmark[1]{References}{ref}
\LastPageEnding

\end{document}